\newcommand{\sst}{\scriptstyle}
\newcommand{\n}{\noindent}
\newcommand{\bb}[1]{\mathbb{#1}}
\newcommand{\cl}[1]{\mathcal{#1}}
\newcommand{\ovl}{\overline}
\newcommand{\intl}{\int\limits}
\theoremstyle{plain}
\newtheorem{lem}{Lemma}[section]
\newtheorem{cor}{Corollary}[section]
\newtheorem{prop}{Proposition}[section]
\theoremstyle{remark}
\newtheorem{rk}{Remark}[section]
\theoremstyle{definition}
\newtheorem{exm}{Example}[section]
\numberwithin{equation}{section}
\newif\ifproofmode
\font\eightrm=cmr6
\def\note#1{%
  \ifproofmode%
    \vadjust{%
      \setbox1=\vtop{%
        \hsize 1cm\parindent=0pt\eightrm\baselineskip=9pt%
        \rightskip=2mm plus 2mm\raggedright#1%
        }%
      \hbox{\kern-1.2cm\smash{\box1}\hfil}%
      }%
  \else
\relax
\fi}
\begin{document}

\title{Bi-Isometries and Commutant Lifting}

\author{Hari Bercovici\footnote{Research partially supported by grants from the National Science Foundation}, Ronald G.~Douglas$^*$ and Ciprian Foias}

\date{}
\maketitle

\begin{center}
\textit{\Large In memory of M.S.~Livsic,}\\
\textit{\Large  one of the founders of modern operator theory}
\end{center}\bigskip

\begin{abstract}
In a previous paper, the authors obtained a model for a bi-isometry,
that is, a pair of commuting isometries on complex Hilbert space.
This representation is based on the canonical model of Sz.~Nagy and
the third author. One approach to describing the invariant subspaces
for such a bi-isometry using this model is to consider isometric
intertwining maps from another such model to the given one.
Representing such maps requires a careful study of the commutant
lifting theorem and its refinements. Various conditions relating to
the existence of isometric liftings are obtained in this note, along
with some examples demonstrating the limitations of our results.
\end{abstract}

\vspace{1.5in}

\n \textit{2000 Mathematics Subject Classification}:\ 46G15, 47A15, 47A20, 47A45, 47B345.

\n \textit{Key Words and Phrases}:\ Bi-isometries, commuting isometries, canonical model, Commutant lifting, intertwining maps, invariant subspaces.
\newpage

\section{Introduction}\label{cfsec1}

\indent

The geometry of complex Hilbert space is especially transparent. In particular, all Hilbert spaces of the same dimension are isometrically isomorphic. One consequence is the simple structure of isometric operators on Hilbert space as was discovered by von~Neumann  in his study of symmetric operators in connection with quantum mechanics. A decade later, this decomposition was rediscovered by Wold who made it the basis for his study of stationary stochastic processes. Another decade later, Beurling obtained his iconic result on invariant subspaces for the unilateral shift operator. While his proof did not rely on the structure of isometries later works showed that the result could be established using it.
In the fifties, Sz.-Nagy demonstrated that all contraction operators on Hilbert space had a unique
 minimal unitary dilation. The application of structure theory for isometries to this unitary
 operator is one starting point for the canonical model theory of Sz.-Nagy and the third author \cite{SzNF2}.
 Much of the development of this theory, including the lifting theorem for intertwining
 operators and the parametrization of the possible lifts can be viewed as exploiting and refining
 the structure theory of
 isometric operators on complex Hilbert space.

The study of commuting $n$-tuples of isometries is not so simple,
even for $n=2$. This paper makes a contribution to this theory. The
starting point is the model introduced implicitly in \cite{BDF} for
a bi-isometry or a pair of commuting isometries. We now describe the
model explicitly. Let $\{\Theta(z), {\cl E},{\cl E}\}$
 be a contractive operator-valued analytic function $(z\in {\bb D})$ and
 set $\Delta(\zeta) = (I - \Theta(\zeta)^* \Theta(\zeta))^*$, $\zeta\in \partial{\bb D}$.
 Define the Hilbert space
\begin{equation}\label{cfeq1.1}
 {\cl H}_\Theta = H^2({\cl E}) \oplus H^2(\ovl{\Delta L^2({\cl E})})
\end{equation}
and the operators
\begin{equation}\label{cfeq1.1a}
V_\Theta(f\oplus g) = f_1\oplus g_1, W_\Theta(f\oplus g) = f_2\oplus g_2,\tag{1.1a}
\end{equation}
where
\begin{align}\label{cfeq1.1b}
 &f_1(z) = zf(z), \quad f_2(z) = \Theta(z)f(z)\qquad (z\in {\bb D})\tag{1.1b}\\
\label{cfeq1.1c}
&g_1(w,\zeta) = \zeta g(w,\zeta), g_2(w,\zeta) = \Delta(\zeta) f(\zeta) + wg(w,\zeta) \qquad (w\in {\bb D}, \zeta\in \partial {\bb D}).\tag{1.1c}
\end{align}
Then $(V_\Theta,W_\Theta)$ is a bi-isometry such that there is no nonzero reducing subspace for $(V_\Theta, W_\Theta)$ on which
 $V_\Theta$ is unitary.

In \cite{BDF2} we have shown
 that \emph{any bi-isometry $(V,W)$, for which there is no nonzero reducing
 subspace ${\cl N}$ such that $V|{\cl N}$ is unitarily
  equivalent to a bi-isometry $(V_\Theta,W_\Theta)$, where $\Theta(\cdot)$ is
  uniquely determined up to coincidence}. (Note that the terminology and the
  notations are as
   in \cite{SzNF2}.)

An important part of the study of this model is a description of all invariant subspaces of the bi-isometry $(V_\Theta,W_\Theta)$. To this end we first describe all the contractive operators $Y$ intertwining two bi-isometries $(V_{\Theta_1},W_{\Theta_1})$ and $(V_\Theta,W_\Theta)$; that is, $Y\in {\cl L}({\cl H}_{\Theta_1},{\cl H}_\Theta)$ and
\begin{equation}\label{cfeq1.2}
 YV_{\Theta_1} = V_\Theta Y,\quad YW_{\Theta_1} = W_\Theta Y.
\end{equation}
Let $P$ denote the orthogonal projection of ${\cl H}_\Theta$ onto $H^2({\cl E}) (\approx H^2({\cl E}) \oplus \{0\} \subset {\cl H}_\Theta)$. Then there exists a unique
 contractive analytic operator-valued function $\{A(\cdot), {\cl E}_1, {\cl E}\}$ such that
\[
 (PYh_1)(z) = A(z)h_1(z)\qquad (z\in {\bb D})
\]
for all $h_1\in H^2({\cl E}_1) (\approx H^2({\cl E}_1)\oplus \{0\} = {\cl H}_{\Theta_1})$. Conversely, given such a contractive analytic function $A(\cdot)$, there exists a contractive intertwining operator $Y$, but it is not unique. Using the Commutant Lifting Theorem, one can describe completely the set of such intertwining contractions.  The description involves an analytic operator-valued function $\{R(\cdot), {\cl R}, {\cl R}'\}$, called the free Schur contraction in Section \ref{cfsec2}. Here, the spaces $\cl R$ and $\cl R'$  are called residual spaces, and they are entirely determined by the functions $\Theta_1,\Theta$ and $A$.
 If ${\cl M}$ is a common invariant subspace for the bi-isometry $(V_\Theta,W_\Theta)$,
 then defining $U_1 = V_\Theta|{\cl M}$ and $U_2 = W_\Theta|{\cl M}$
  yields a bi-isometry $(U_1,U_2)$ on ${\cl M}$. Moreover, the inclusion
  map $X\colon \ {\cl M}\to {\cl H}_\Theta$ is an isometric intertwining map.
  Conversely, if $Y$ is an isometric intertwining map from a
  model $(V_{\Theta_1}, W_{\Theta_1})$ on ${\cl H}_{\Theta_1}$ to ${\cl H}_{\Theta}$,
  then the range of $Y$ is a common invariant subspace for the
  bi-isometry $(V_\Theta,W_\Theta)$. Hence, the problem of describing the common
  invariant subspaces for $(V_\Theta,W_\Theta)$ is closely related to describing the
  isometric intertwining maps from some
  model $(V_{\Theta_1}, W_{\Theta_1})$ to $(V_\Theta,W_\Theta)$.

Thus the description of all the invariant subspaces of
$(V_\Theta,W_\Theta)$ is intimately connected to the determination
of the class of the free Schur contractions for which the
corresponding operator $Y$ is an isometry. As yet we have not found
a completely satisfactory characterization of that set. In this Note
we present our contributions to
 this problem with the hope that they may be instrumental in the discovery of
 an \emph{easily applicable} characterization.

In the next section we provide a description of the commutant lifting theorem focusing on the aspects relevant to our problem. In Section \ref{cfsec3} the analytical details are  taken up while in the fourth section we state our results on isometric intertwining maps. In the final section, we apply these results to
 the question of invariant subspaces in those cases in which our results are effective.
  We conclude
 with a number of open questions and future directions for study.

\section{A short review of the Commutant Lifting Theorem}\label{cfsec2}

\indent

Let $T' \in {\cl L}({\cl H}')$ be a completely nonunitary (c.n.u.) contraction and $T\in {\cl L}({\cl H})$ an isometry; here ${\cl H}, {\cl H}'$ are (separable) Hilbert spaces. Furthermore, let $X\in {\cl L}({\cl H}, {\cl H}')$ be a contraction intertwining $T$ and $T'$; that
 is,
\begin{equation}\label{cfeq2.1}
 T'X = XT,\quad\|X\|\le1.
\end{equation}
Let $U'\in {\cl L}({\cl K}')$ be a minimal isometric lifting of $T'$.  In other words, if $P'$ denotes the orthogonal projection  ${\cl K}'$ onto ${\cl H}'$ and $I'$ denotes
 the identity operator on ${\cl K}'$, we have
\begin{equation}\label{cfeq2.2}
P'U' = T'P',
\end{equation}

\begin{align}\label{cfeq2.2a}
 U^{\prime *}U' &= I',\tag{2.2a}\\
\intertext{and}
\label{cfeq2.2b}
{\cl K}' &= \bigvee^\infty_{n=0} U^{\prime n} {\cl H}'.\tag{2.2b}
\end{align}
Since ${\cl H}'$  is essentially unique,
 one can take
\begin{align}\label{cf2.2c}
 &{\cl K}' = {\cl H}' \oplus H^2({\cl D}_{T'})\tag{2.2c}\\
\label{cfeq2.2d}
&U'(h' \oplus f(\cdot)) = T'h' \oplus (D_{T'}h' + \cdot f(\cdot))\tag{2.2d}
\end{align}
for all $h'\in {\cl H}', f(\cdot)\in H^2({\cl D}_{T'})$. Recall that
$D_{T'}=(I-T'T^{\prime*})^{1/2}$ and
 ${\cl D}_{T'}=(D_{T'}{\cl H}')^-$.

In its original form \cite{sarason,SzNF1}, \emph{the Commutant Lifting Theorem asserts that there exists an operator $X\in {\cl L}({\cl H}, {\cl K}')$ satisfying the following properties}
\begin{align}\label{cfeq2.3}
 &U'Y = YT,\\
\label{cfeq2.3a}
&\|Y\|\le 1,\tag{2.3a}\\
\label{cfeq2.3b}
&P'Y= X.\tag{2.3b}
\end{align}
Such an operator $Y$ is called a \emph{contractive intertwining
lifting of} $X$. In this study we need a tractable classification of
all these liftings. To this aim we introduce the  \emph{isometry}
\begin{equation}
 \omega:(D_XT{\cl H})^-\to\{D_{T'}Xh\oplus D_Xh\colon \ h\in {\cl H}\}^-,
\end{equation}
 obtained by closing the linear operator
\begin{equation}\label{cfeq2.4a}
 \omega_0\colon \ D_XTh\mapsto D_{T'} Xh\oplus D_Xh\qquad (h\in {\cl H});\tag{2.4a}
\end{equation}
and the partial isometry operator $\bar\omega\in{\cl L}({\cl D}_X, {\cl D}_{T'})$ defined by
\begin{equation}\label{cfeq2.4b}
 \bar\omega|(D_XT{\cl H})^- = \omega,\quad \bar\omega|({\cl D}_X \ominus (D_XT{\cl H})^-) = 0.\tag{2.4b}
\end{equation}
 This operator obviously satisfies
\begin{equation}\label{cfeq2.4c}
 \ker \bar\omega = {\cl D}_X \ominus(D_XT{\cl H})^-, \quad \ker \bar\omega^* = ({\cl D}_{T'} \oplus {\cl D}_X) \ominus \text{ran } \omega.\tag{2.4c}
\end{equation}
Also we will denote
 by $\Pi$ and $\Pi'$ the operators on ${\cl D}_{T'}\oplus {\cl D}_X$ defined by
\begin{equation}\label{cfeq2.4d}
 \Pi'(d'\oplus d) = d', \quad \Pi(d'\oplus d) = d\qquad (d'\in {\cl D}_{T'}, d\in {\cl D}_X).\tag{2.4d}
\end{equation}
With
this preparation we can state the needed
description (see \cite{FFGK}, Ch. VI).

\begin{prop}\label{cfprop2.1}
 {\rm (i)}~~Any contractive intertwining lifting $Y$ of $X$ is of the form
\begin{equation}\label{cfeq2.5}
 Y = \begin{bmatrix}
      X\\ \Gamma(\cdot) D_X
     \end{bmatrix}\colon \ {\cl H}\mapsto \underset{\sst H^2({\cl D}_{T'})}{\overset{\sst{\cl H}}{\textstyle\bigoplus}}
 = {\cl K}',
\end{equation}
where $\Gamma(\cdot)$ is given by the formula
\begin{equation}\label{cfeq2.5a}
 \Gamma(z) = \Pi'W(z) (1-z\Pi W(z))^{-1}\qquad (z\in {\bb D}),\tag{2.5a}
\end{equation}
where
\begin{equation}\label{cfeq2.5b}
 W(z)\colon \ {\cl D}_X \mapsto {\cl D}_{T'}\oplus {\cl D}_X\qquad (z\in {\bb D})\tag{2.5b}
\end{equation}
is a contractive analytic function satisfying
\begin{equation}\label{cfeq2.5c}
 W(z)|(D_XT{\cl H})^- = \omega\qquad (z\in {\bb D}).\tag{2.5c}
\end{equation}

{\rm (ii)}~~Conversely, for any $W(\cdot)$ satisfying the above conditions, the formulas \eqref{cfeq2.5} and \eqref{cfeq2.5a} yield a contractive intertwining lifting $Y$ of $X$.

{\rm (iii)}~~The correspondence between  $Y$ and $W(\cdot)$ is one-to-one.
\end{prop}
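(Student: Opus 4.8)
The plan is to reduce Proposition~\ref{cfprop2.1} to the classical parametrization of contractive intertwining liftings via the choice sequence / Schur-contraction mechanism, and then repackage that parametrization in the analytic form \eqref{cfeq2.5a}. First I would recall the one-step structure: any contraction $Y\colon{\cl H}\to{\cl K}'$ with $P'Y=X$ must, by \eqref{cf2.2c} and the standard completion-of-a-contraction lemma, have the form $Y=\bigl[\begin{smallmatrix}X\\ Z D_X\end{smallmatrix}\bigr]$ for a unique contraction $Z\colon{\cl D}_X\to H^2({\cl D}_{T'})$; this is just the fact that the defect operator $D_X$ controls exactly the ``extra'' part of $Y$. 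The intertwining condition $U'Y=YT$ then has to be translated through \eqref{cfeq2.2d}. The content here is that $U'$ acts on ${\cl H}'\oplus H^2({\cl D}_{T'})$ by $T'$ on the first summand plus the model shift $S_{T'}$ (multiplication by $z$) together with the ``injection'' $h'\mapsto D_{T'}h'$; matching coefficients of the intertwining relation forces, on the first Taylor coefficient of $ZD_X$, precisely the identity that $\omega$ and $\bar\omega$ encode, namely that $Z D_X T = \omega D_X$ on $(D_XT{\cl H})^-$ after projecting appropriately. I would carry this out carefully because it is where the operators $\Pi,\Pi'$ and the splitting ${\cl D}_{T'}\oplus{\cl D}_X$ in \eqref{cfeq2.5b} enter.

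Next, I would invoke the known description from \cite{FFGK}, Ch.~VI: the contractive intertwining liftings of $X$ are in bijective correspondence with sequences of contractions (a choice sequence) or, equivalently, with a single contractive analytic function $W(\cdot)$ from ${\cl D}_X$ into ${\cl D}_{T'}\oplus{\cl D}_X$ subject to the side constraint \eqref{cfeq2.5c} that its restriction to $(D_XT{\cl H})^-$ be the fixed isometry $\omega$. The key algebraic identity to verify is that the map $z\mapsto\Gamma(z)=\Pi'W(z)(1-z\Pi W(z))^{-1}$ is well defined and contraction-valued: the inverse exists because $\|\Pi W(z)\|\le1$ and $|z|<1$, so $1-z\Pi W(z)$ is boundedly invertible by Neumann series; and the contractivity of $z\mapsto\Gamma(z)$, viewed as the transfer function of the colligation with ``main operator'' $\Pi W(\cdot)$, ``output'' $\Pi'W(\cdot)$, follows from the standard state-space lemma that a contractive $W$ produces a contractive (in fact Schur-class) transfer function. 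Then $Y=\bigl[\begin{smallmatrix}X\\ \Gamma(\cdot)D_X\end{smallmatrix}\bigr]$ automatically satisfies \eqref{cfeq2.3}--\eqref{cfeq2.3b}; conversely, unwinding a given lifting $Y$ recovers $W(\cdot)$ via the same colligation identities, which also gives the uniqueness in~(iii).

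For part~(iii) specifically, I would argue that the correspondence $Y\leftrightarrow W(\cdot)$ is injective by noting that $\Gamma(\cdot)$ determines $Y$ (since $X$ and $D_X$ are fixed), and $W(\cdot)$ is recovered from $\Gamma(\cdot)$ by inverting the Möbius-type relation \eqref{cfeq2.5a} — solving for $W(z)$ from $\Gamma(z)$ gives $\Pi'W(z)$ and $\Pi W(z)$ back, hence $W(z)$ on all of ${\cl D}_X$ once \eqref{cfeq2.5c} pins down its values on $(D_XT{\cl H})^-$; surjectivity onto the admissible $W(\cdot)$'s is exactly part~(ii). The main obstacle I anticipate is \emph{not} any single deep estimate but rather the bookkeeping in the translation step: correctly identifying, from $U'Y=YT$ together with the explicit form \eqref{cfeq2.2d} of $U'$, that the constraint on $W(\cdot)$ is precisely \eqref{cfeq2.5c} with the fixed value $\omega$ (and nothing more), and that the ``free'' part of $W(\cdot)$ on ${\cl D}_X\ominus(D_XT{\cl H})^-$ is genuinely unconstrained. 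Since this is stated as a known result with a pointer to \cite{FFGK}, I would present the argument as a guided reconstruction of that proof, emphasizing the colligation/transfer-function viewpoint which makes the analytic formula \eqref{cfeq2.5a} transparent and makes the one-to-one claim immediate.
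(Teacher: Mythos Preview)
The paper does not give a proof of Proposition~\ref{cfprop2.1}; it simply states the result and refers to \cite{FFGK}, Ch.~VI, for the needed description. Your proposal correctly identifies this as a known parametrization result and sketches a reconstruction along the lines of the FFGK argument (completion of a column contraction, translation of the intertwining relation into the constraint \eqref{cfeq2.5c}, and the transfer-function formula \eqref{cfeq2.5a}), so it is entirely consistent with---and more detailed than---the paper's own treatment.
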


The function $W(\cdot)$ is called the \emph{Schur contraction} of
$Y$, and $Y$ is called the contractive intertwining lifting
associated to $W(\cdot)$.
 It is immediate that the Schur contraction is uniquely determined by its restriction
\begin{equation}\label{cfeq2.6}
R(z)= W(z)|\ker\bar\omega:\ker\bar\omega\to\ker\bar\omega^*\qquad (z\in {\bb D}).
\end{equation}
Any contractive analytic function $\{R(\cdot),\ker\bar\omega,\ker\bar\omega^*\}$ determines a Schur contraction.  The function $R(\cdot)$ will be called the \emph{free Schur contraction} of $Y$. Thus we have the following

\begin{cor}\label{cfcor2.2}
 The formulas \eqref{cfeq2.5}, \eqref{cfeq2.5a}, \eqref{cfeq2.6} establish a bijection between the set of all contractive intertwining liftings of $X$ and the set of all free Schur contractions.
\end{cor}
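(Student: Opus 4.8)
The plan is to deduce Corollary~\ref{cfcor2.2} by composing the bijection $Y\leftrightarrow W(\cdot)$ furnished by Proposition~\ref{cfprop2.1} with a bijection between Schur contractions $W(\cdot)$ and free Schur contractions $R(\cdot)=W(\cdot)|\ker\bar\omega$. The whole argument rests on the two orthogonal decompositions
\[
 {\cl D}_X = (D_XT{\cl H})^- \oplus \ker\bar\omega,\qquad {\cl D}_{T'}\oplus {\cl D}_X = \text{ran } \omega \oplus \ker\bar\omega^*,
\]
which are precisely the two identities recorded in \eqref{cfeq2.4c}, together with one elementary fact about contractions. First I would note that, by \eqref{cfeq2.5c}, every Schur contraction $W(z)$ agrees with the fixed isometry $\omega$ on $(D_XT{\cl H})^-$; hence $W(z)$ is completely determined by the pair consisting of $\omega$ and the restriction $R(z)=W(z)|\ker\bar\omega$. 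In particular the assignment $W(\cdot)\mapsto R(\cdot)$ is injective, which combined with Proposition~\ref{cfprop2.1}(iii) already shows that distinct free Schur contractions give rise to distinct liftings $Y$.

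Next I would check that $R(\cdot)$ is genuinely a contractive analytic function $\{R(\cdot),\ker\bar\omega,\ker\bar\omega^*\}$. Contractivity and analyticity are inherited from $W(\cdot)$, since $R(z)$ is merely a restriction of $W(z)$; the only thing to verify is that $R(z)$ takes values in $\ker\bar\omega^*$. This follows from the standard observation that an operator $W$ with $\|W\|\le 1$ that is isometric on a closed subspace ${\cl M}$ satisfies $W({\cl M}^\perp)\perp W({\cl M})$: for $m\in {\cl M}$, $m'\in {\cl M}^\perp$ and $\lambda\in {\bb C}$ the inequality $\|W(\lambda m+m')\|^2\le |\lambda|^2\|m\|^2+\|m'\|^2$ reduces, using $\|Wm\|=\|m\|$, to $2\,\text{Re}(\bar\lambda\langle Wm,Wm'\rangle)+\|Wm'\|^2\le\|m'\|^2$, which forces $\langle Wm,Wm'\rangle=0$. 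Applying this with ${\cl M}=(D_XT{\cl H})^-$ and $W=W(z)$, so that $W(z){\cl M}=\text{ran } \omega$, and invoking the second identity of \eqref{cfeq2.4c}, we obtain $W(z)\ker\bar\omega\subseteq(\text{ran } \omega)^\perp=\ker\bar\omega^*$, as needed.

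Finally, for surjectivity, given an arbitrary contractive analytic $\{R(\cdot),\ker\bar\omega,\ker\bar\omega^*\}$ I would define $W(z)$ on ${\cl D}_X=(D_XT{\cl H})^-\oplus\ker\bar\omega$ by $W(z)|(D_XT{\cl H})^-=\omega$ and $W(z)|\ker\bar\omega=R(z)$, the latter composed with the inclusion $\ker\bar\omega^*\hookrightarrow {\cl D}_{T'}\oplus {\cl D}_X$. Since for $m\in(D_XT{\cl H})^-$ and $k\in\ker\bar\omega$ the vectors $\omega m\in\text{ran } \omega$ and $R(z)k\in\ker\bar\omega^*$ are orthogonal by \eqref{cfeq2.4c}, one has $\|W(z)(m+k)\|^2=\|m\|^2+\|R(z)k\|^2\le\|m\|^2+\|k\|^2=\|m+k\|^2$, so $W(z)$ is a contraction; it is clearly analytic and satisfies \eqref{cfeq2.5c}, hence it is a Schur contraction whose free Schur contraction is $R(\cdot)$. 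Composing the resulting bijection $R(\cdot)\leftrightarrow W(\cdot)$ with the bijection $W(\cdot)\leftrightarrow Y$ of Proposition~\ref{cfprop2.1} then yields the desired bijection realized through \eqref{cfeq2.5}, \eqref{cfeq2.5a} and \eqref{cfeq2.6}. I do not expect a real obstacle; the one point demanding care is keeping the two orthogonal decompositions aligned, since it is precisely the orthogonality $\text{ran } \omega\perp\ker\bar\omega^*$ that makes the contraction estimate work in both directions.
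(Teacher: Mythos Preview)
Your proof is correct and follows exactly the approach the paper intends: the paper treats Corollary~\ref{cfcor2.2} as an immediate consequence of Proposition~\ref{cfprop2.1}, merely asserting in the two sentences preceding the corollary that $W(\cdot)$ is uniquely determined by its restriction $R(\cdot)$ and that any contractive analytic $\{R(\cdot),\ker\bar\omega,\ker\bar\omega^*\}$ determines a Schur contraction. Your argument simply unpacks these assertions, and the one nontrivial point you supply---that a contraction which is isometric on a closed subspace maps the orthogonal complement into the orthogonal complement of the image---is precisely the standard fact the paper is tacitly invoking.
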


As already stated, the main purpose of this paper is to study the free Schur contractions for which the associated contractive intertwining lifting is isometric; in particular, to find necessary conditions on $T,T'$ and $X$ for such an isometric lifting to exist.

\section{Analytic considerations}\label{cfsec3}

\indent

Let ${\cl D}, {\cl D}'$ be two (separable) Hilbert spaces and let
\begin{equation}\label{cfeq3.1}
W(z) = \begin{bmatrix}
A(z)\\ B(z)
       \end{bmatrix} \in {\cl L}({\cl D}, {\cl D}\oplus {\cl D}'), \left\|
\begin{bmatrix}
A(z)\\ B(z)
\end{bmatrix}\right\|\le 1\qquad (z\in {\bb D})
\end{equation}
be analytic in ${\bb D}$. Define an analytic function $\Gamma$ by setting
\begin{equation}\label{cfeq3.2}
\Gamma(z) = B(z) (I-zA(z))^{-1}\in{\cl L}({\cl D}, {\cl D}') \qquad (z\in{\bb D}).
\end{equation}

\begin{lem}\label{cflem3.1}
For all $d\in {\cl D}$ the function $\Gamma d$ defined by
 $\Gamma d(z)=\Gamma(z)d$ belongs to $\in H^2({\cl D}')$, and
\begin{align}\label{cfeq3.3}
\|\Gamma d\|^2_{H^2({\cl D}')} &= \lim_{\rho\nearrow 1} \frac1{2\pi} \int^{2\pi}_0 \|\Gamma(\rho e^{i\theta})d\|^2d\theta\\
&= \|d\|^2 - \lim_{\rho\nearrow 1} \left[\left(\frac1{\rho^2}-1\right) \frac1{2\pi} \int^{2\pi}_0 \|(I-\rho e^{i\theta} A(\rho e^{i\theta}))^{-1}d\|^2 d\theta\right.\notag\\
&\quad  \left. + \frac1{2\pi} \int^{2\pi}_0 \|D_{W(\rho e^{i\theta})} (I-\rho e^{i\theta} A(\rho e^{i\theta}))^{-1} d\|^2d\theta\right].\notag
\end{align}
\end{lem}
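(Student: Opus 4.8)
\medskip
\noindent The plan is to derive, for each radius $\rho\in(0,1)$, an \emph{exact} formula for the circular mean $L(\rho):=\frac1{2\pi}\int_0^{2\pi}\|\Gamma(\rho e^{i\theta})d\|^2\,d\theta$ and then let $\rho\nearrow1$. First I would record the set-up: since $\|A(z)\|\le1$ we have $\|zA(z)\|\le|z|<1$ on $\bb D$, so $I-zA(z)$ is boundedly invertible there and
\[
 F(z):=(I-zA(z))^{-1}d
\]
is a well-defined analytic $\cl D$-valued function on $\bb D$ with $F(0)=d$, while $\Gamma(z)d=B(z)F(z)$ by \eqref{cfeq3.2}. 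For fixed $\rho<1$ everything that occurs is continuous on the circle $|z|=\rho$, so all the integrals below are finite.

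\medskip
\noindent Next I would establish a pointwise identity on $\bb D\setminus\{0\}$. From $d=F(z)-zA(z)F(z)$ one gets $zA(z)F(z)=F(z)-d$, hence
\[
 |z|^2\,\|A(z)F(z)\|^2=\|F(z)\|^2-2\,\mathrm{Re}\langle F(z),d\rangle+\|d\|^2 .
\]
On the other hand, by \eqref{cfeq3.1} the codomain of $W(z)$ is the orthogonal sum $\cl D\oplus\cl D'$, so $\|W(z)x\|^2=\|A(z)x\|^2+\|B(z)x\|^2$; and with $D_{W(z)}=(I-W(z)^*W(z))^{1/2}$ one has $\|D_{W(z)}x\|^2=\|x\|^2-\|W(z)x\|^2$. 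Taking $x=F(z)$ gives
\[
 \|A(z)F(z)\|^2+\|\Gamma(z)d\|^2=\|F(z)\|^2-\|D_{W(z)}F(z)\|^2 .
\]
Eliminating $\|A(z)F(z)\|^2$ between the last two displays yields
\[
 \|\Gamma(z)d\|^2=\|F(z)\|^2-\|D_{W(z)}F(z)\|^2-\frac1{|z|^2}\bigl(\|F(z)\|^2-2\,\mathrm{Re}\langle F(z),d\rangle+\|d\|^2\bigr).
\]

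\medskip
\noindent Then I would integrate this identity over $z=\rho e^{i\theta}$, $0\le\theta<2\pi$. Since $z\mapsto\langle F(z),d\rangle$ is scalar analytic on $\bb D$, the mean value property gives $\frac1{2\pi}\int_0^{2\pi}\langle F(\rho e^{i\theta}),d\rangle\,d\theta=\langle F(0),d\rangle=\|d\|^2$, a real number. Writing $I(\rho):=\frac1{2\pi}\int_0^{2\pi}\|F(\rho e^{i\theta})\|^2\,d\theta$, integration of the last display yields
\[
 L(\rho)=\frac1{\rho^2}\|d\|^2-\bigl(\tfrac1{\rho^2}-1\bigr)I(\rho)-\frac1{2\pi}\int_0^{2\pi}\|D_{W(\rho e^{i\theta})}F(\rho e^{i\theta})\|^2\,d\theta .
\]
Because $\frac1{\rho^2}-1>0$ and the last two terms are nonnegative, $L(\rho)\le\rho^{-2}\|d\|^2$; and $L$ is nondecreasing on $(0,1)$ since the circular means of the analytic function $\Gamma(\cdot)d$ increase with $\rho$. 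Hence $\sup_{0<\rho<1}L(\rho)=\lim_{\rho\nearrow1}L(\rho)\le\|d\|^2<\infty$, which by the standard description of $H^2$ shows that $\Gamma d\in H^2(\cl D')$ with $\|\Gamma d\|^2_{H^2(\cl D')}=\lim_{\rho\nearrow1}L(\rho)$ — the first equality in \eqref{cfeq3.3}. For the second, I would rewrite the displayed identity as
\[
 \bigl(\tfrac1{\rho^2}-1\bigr)I(\rho)+\frac1{2\pi}\int_0^{2\pi}\|D_{W(\rho e^{i\theta})}F(\rho e^{i\theta})\|^2\,d\theta=\frac1{\rho^2}\|d\|^2-L(\rho);
\]
the right-hand side converges to $\|d\|^2-\|\Gamma d\|^2_{H^2(\cl D')}$ as $\rho\nearrow1$, so the left-hand side has the same limit, and rearranging is precisely \eqref{cfeq3.3}.

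\medskip
\noindent The only genuinely substantive step is choosing the right pointwise identity — combining the resolvent relation $d=(I-zA(z))F(z)$ with the defect identity $\|D_{W(z)}x\|^2=\|x\|^2-\|W(z)x\|^2$; everything after that is bookkeeping. The two points needing a little care are the a priori bound $L(\rho)\le\rho^{-2}\|d\|^2$, which is what actually forces $\Gamma d\in H^2$, and the fact that one must not assert separate convergence of $(\rho^{-2}-1)I(\rho)$ and of the defect integral as $\rho\nearrow1$: only their sum is controlled, being equal to $\rho^{-2}\|d\|^2-L(\rho)$. I do not anticipate any further obstacle.
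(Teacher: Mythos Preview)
Your proof is correct and follows essentially the same route as the paper's: both exploit the resolvent relation $zA(z)F(z)=F(z)-d$ together with the defect identity $\|D_{W(z)}x\|^2=\|x\|^2-\|A(z)x\|^2-\|B(z)x\|^2$ to obtain the exact formula for $L(\rho)$, then pass to the limit. The only cosmetic difference is that you establish the pointwise identity first and invoke the mean value property for the analytic scalar $\langle F(z),d\rangle$, whereas the paper phrases the same step as the $H^2$-orthogonality of $d$ and $\rho z A(\rho z)(I-\rho z A(\rho z))^{-1}d$; your closing remark that only the \emph{sum} of the two nonnegative terms is shown to converge is a welcome bit of extra care.
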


\begin{proof}
For $\rho\in (0,1)$, the function $(1-\rho zA(\rho z))^{-1}d$  is
bounded, and in particular it belongs to $H^2({\cl D})$. Moreover,
it can be decomposed as a sum of two orthogonal vectors in $H^2({\cl
D})$ as follows:
$$(1-\rho zA(\rho z))^{-1}d=d+\rho zA(\rho z)(1-\rho zA(\rho z))^{-1}d.$$
Thus we have
\begin{align*}
\frac1{2\pi} \int^{2\pi}_0 \|\Gamma(\rho e^{i\theta})d\|^2 d\theta &= \frac1{2\pi} \int^{2\pi}_0 \|W(\rho e^{i\theta}) (I-\rho e^{i\theta} A(\rho e^{i\theta}))^{-1} d\|^2 d\theta\\
&\quad - \frac1{2\pi} \int^{2\pi}_0 \|A(\rho e^{i\theta}) (I-\rho e^{i\theta} A(\rho e^{i\theta}))^{-1} d\|^2 d\theta\\
&= \frac1{2\pi} \int^{2\pi}_0 \|W(\rho e^{i\theta}) (I-\rho e^{i\theta}A(\rho e^{i\theta}))^{-1}d\|^2 dt \\
&\quad - \frac1{2\pi} \int^{2\pi}_0 \frac1{\rho^2} \|-d + (I-\rho e^{i\theta} A(\rho e^{i\theta}))^{-1} d\|^2 d\theta \\
\intertext{\newpage}
&= \frac1{2\pi} \int^{2\pi}_0 \|W(\rho e^{i\theta}) (I-\rho e^{i\theta} A(\rho e^{i\theta}))^{-1} d\|^2 + \frac1{\rho^2} \|d\|^2 \\
&\quad - \frac1{\rho^2} \frac1{2\pi} \int^{2\pi}_0 \|(I-\rho e^{i\theta} A(\rho e^{i\theta}))^{-1}d\|^2 d\theta \\
&= \frac1{\rho^2} \|d\|^2 - \frac1{2\pi} \int^{2\pi}_0 \|D_{W(\rho e^{i\theta})} (I-\rho e^{i\theta} A(\rho e^{i\theta})^{-1}d\|^2 d\theta \\
&\quad - \left(\frac1{\rho^2} - 1\right) \frac1{2\pi} \int^{2\pi}_0 \|(I-\rho e^{i\theta} A(e^{i\theta}))^{-1} d\|^2 d\theta + \left(\frac1{\rho^2} -1\right) \|d\|^2,
\end{align*}
from which \eqref{cfeq3.3} follows by letting $\rho\nearrow 1$.
\end{proof}

The following equality is an obvious consequence of Lemma \ref{cflem3.1}.

\begin{cor}\label{cfcor3.1}
The map $\Gamma(\cdot)\colon \ d(\in {\cl D})\mapsto \Gamma d$ is a
contraction from ${\cl D}$ into $H^2({\cl D}')$ and
\begin{align}\label{cfeq3.3a}
\|D_{\Gamma(\cdot)}d\|^2 &= \lim_{\rho\nearrow 1} \left[\frac1{2\pi} \int^{2\pi}_0 \|D_{W(\rho e^{i\theta})} (I-\rho e^{i\theta} A(\rho e^{i\theta}))^{-1}d\|^2 d\theta\right.\tag{3.3\text{a}}\\
&\quad \left. + \left(\frac1{\rho^2}-1\right) \frac1{2\pi} \int^{2\pi}_0 \|(I-\rho e^{i\theta} A(\rho e^{i\theta}))^{-1} d\|^2 d\theta\right]\qquad (d\in{\cl D}).\notag
\end{align}
\end{cor}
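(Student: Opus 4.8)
The plan is to read Corollary~\ref{cfcor3.1} off of Lemma~\ref{cflem3.1} by a single algebraic rearrangement, the only subtlety being the identification of the left-hand side with a defect norm. First I would note that Lemma~\ref{cflem3.1} already exhibits $\|\Gamma d\|^2_{H^2(\cl D')}$ as $\|d\|^2$ minus a limit of a sum of two manifestly nonnegative quantities (the factor $(1/\rho^2)-1$ is positive for $\rho\in(0,1)$, and the two integrands are squared norms). Hence $\|\Gamma d\|^2_{H^2(\cl D')}\le\|d\|^2$ for every $d\in\cl D$, which is exactly the assertion that the linear map $d\mapsto\Gamma d$ is a contraction from $\cl D$ into $H^2(\cl D')$. (One should also record that $\Gamma d\in H^2(\cl D')$, but that is part of the statement of Lemma~\ref{cflem3.1}.)

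Once contractivity is in hand, the operator $D_{\Gamma(\cdot)}=(I-\Gamma(\cdot)^*\Gamma(\cdot))^{1/2}$ is defined on $\cl D$, and its defect norm satisfies
\[
\|D_{\Gamma(\cdot)}d\|^2=\|d\|^2-\|\Gamma(\cdot)d\|^2=\|d\|^2-\|\Gamma d\|^2_{H^2(\cl D')}.
\]
Substituting the expression for $\|\Gamma d\|^2_{H^2(\cl D')}$ furnished by \eqref{cfeq3.3} and cancelling the common term $\|d\|^2$ yields \eqref{cfeq3.3a} verbatim. That is the whole argument; there is essentially nothing to prove beyond bookkeeping, which is precisely why the text calls it an "obvious consequence."

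The only place where one must be a little careful — and the closest thing to an obstacle — is the passage from "$\Gamma(\cdot)$ maps $\cl D$ into $H^2(\cl D')$ contractively" to "$D_{\Gamma(\cdot)}d$ makes sense and has the stated norm." This requires that one is genuinely viewing $\Gamma(\cdot)$ as a bounded operator $\cl D\to H^2(\cl D')$ (so that $\Gamma(\cdot)^*\Gamma(\cdot)\in\cl L(\cl D)$ and $I-\Gamma(\cdot)^*\Gamma(\cdot)\ge0$), rather than as the pointwise-defined analytic function $z\mapsto\Gamma(z)$; the notation $\Gamma(\cdot)$ is being overloaded, and the identity $\|D_{\Gamma(\cdot)}d\|^2=\|d\|^2-\|\Gamma d\|^2_{H^2(\cl D')}$ is just the definition of the defect operator of that bounded operator. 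With that reading fixed, the limit in \eqref{cfeq3.3a} exists simply because it equals $\|d\|^2-\|\Gamma d\|^2_{H^2(\cl D')}$, a quantity already shown to exist in Lemma~\ref{cflem3.1}. No further estimates, monotonicity arguments, or interchange-of-limit justifications are needed.
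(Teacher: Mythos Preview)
Your proposal is correct and is exactly the argument the paper has in mind: the text itself offers no proof beyond calling the corollary an ``obvious consequence'' of Lemma~\ref{cflem3.1}, and your rearrangement $\|D_{\Gamma(\cdot)}d\|^2=\|d\|^2-\|\Gamma d\|^2_{H^2(\cl D')}$ together with the nonnegativity observation is precisely that obvious step.
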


\begin{lem}\label{cflem3.2}
For all $d\in {\cl D}$ we have
\begin{align}\label{cfeq3.4}
&\lim_{\rho\nearrow 1} \left[\|d\|^2 - \frac1{2\pi} \int^{2\pi}_0 \|D_{A(\rho e^{i\theta})} (I-\rho e^{i\theta} A(\rho e^{i\theta}))^{-1}d\|^2d\theta\right]\\
&\quad = \lim_{\rho\nearrow 1} \left(\frac1{\rho^2}-1\right) \frac1{2\pi} \int^{2\pi}_0 \|(I-\rho e^{i\theta} A(\rho e^{i\theta}))^{-1}d\|^2d\theta.\notag
\end{align}
\end{lem}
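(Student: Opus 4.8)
The plan is to establish, for each fixed $\rho\in(0,1)$, an \emph{exact} identity that differs from \eqref{cfeq3.4} only by a term which visibly vanishes as $\rho\nearrow1$, and then to let $\rho\nearrow1$. Throughout, fix $d\in{\cl D}$, abbreviate $A_\rho(\theta)=A(\rho e^{i\theta})$, and set
\[
g_\rho(\theta)=(I-\rho e^{i\theta}A_\rho(\theta))^{-1}d ,
\]
so that $\rho e^{i\theta}A_\rho(\theta)g_\rho(\theta)=g_\rho(\theta)-d$.

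First I would use that $A(z)$ is a contraction, so that $D_{A(z)}^2=I-A(z)^*A(z)$ and hence
\[
\|D_{A_\rho(\theta)}g_\rho(\theta)\|^2=\|g_\rho(\theta)\|^2-\|A_\rho(\theta)g_\rho(\theta)\|^2 .
\]
Next, from $\rho e^{i\theta}A_\rho(\theta)g_\rho(\theta)=g_\rho(\theta)-d$ and $|e^{i\theta}|=1$ one gets $\|A_\rho(\theta)g_\rho(\theta)\|^2=\rho^{-2}\|g_\rho(\theta)-d\|^2$, so $\|D_{A_\rho(\theta)}g_\rho(\theta)\|^2=\|g_\rho(\theta)\|^2-\rho^{-2}\|g_\rho(\theta)-d\|^2$. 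Integrating in $\theta$ and using that the mean value of $g_\rho(\cdot)$ over the circle equals $d$ — this is the orthogonal decomposition $g_\rho=d+\rho e^{i\theta}A_\rho g_\rho$ in $H^2({\cl D})$ already exploited in the proof of Lemma~\ref{cflem3.1}, or simply the analyticity of $z\mapsto(I-zA(z))^{-1}d$ on ${\bb D}$ — one finds $\frac1{2\pi}\int_0^{2\pi}\|g_\rho(\theta)-d\|^2\,d\theta=J(\rho)-\|d\|^2$, where $J(\rho)=\frac1{2\pi}\int_0^{2\pi}\|g_\rho(\theta)\|^2\,d\theta$.

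Combining these, I would arrive at the exact identity, valid for every $\rho\in(0,1)$,
\[
\|d\|^2-\frac1{2\pi}\int_0^{2\pi}\|D_{A_\rho(\theta)}g_\rho(\theta)\|^2\,d\theta
=\Bigl(\tfrac1{\rho^2}-1\Bigr)\bigl(J(\rho)-\|d\|^2\bigr)
=\Bigl(\tfrac1{\rho^2}-1\Bigr)J(\rho)-\Bigl(\tfrac1{\rho^2}-1\Bigr)\|d\|^2 .
\]
Since $\bigl(\tfrac1{\rho^2}-1\bigr)\|d\|^2\to0$, letting $\rho\nearrow1$ yields \eqref{cfeq3.4} (and in particular shows that the two one-sided limits in \eqref{cfeq3.4} exist simultaneously and are equal).

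I do not expect a genuine obstacle: the computation is routine, and the only point meriting a word is the mean-value identity $\frac1{2\pi}\int_0^{2\pi}g_\rho(\theta)\,d\theta=d$, which is immediate from analyticity or can be quoted from Lemma~\ref{cflem3.1}. The role of the lemma is simply to isolate the quantity $\bigl(\tfrac1{\rho^2}-1\bigr)J(\rho)$ appearing in \eqref{cfeq3.3} and \eqref{cfeq3.3a} and to re-express its limit in terms of the defect operator $D_{A(\cdot)}$, which is the form needed in the sequel.
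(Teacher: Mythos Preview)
Your proof is correct and is essentially the same computation as the paper's: both use $\|D_{A}g\|^2=\|g\|^2-\rho^{-2}\|g-d\|^2$ together with the $H^2$-orthogonality $\frac1{2\pi}\int\|g_\rho-d\|^2\,d\theta=J(\rho)-\|d\|^2$ to obtain the exact identity for fixed $\rho$, and then let $\rho\nearrow1$. The only difference is that the paper writes the identity in the equivalent form $\frac1{2\pi}\int\|D_A g\|^2\,d\theta=\frac1{\rho^2}\|d\|^2+\bigl(1-\tfrac1{\rho^2}\bigr)J(\rho)$ before passing to the limit.
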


\begin{proof}
For $d\in {\cl D}$ we have
\begin{align*}
&\frac1{2\pi} \int^{2\pi}_0 \|D_{A(\rho e^{i\theta})} (I-\rho e^{i\theta}A(\rho e^{i\theta}))^{-1}d\|^2 d\theta\\
&\quad = \frac1{2\pi} \int^{2\pi}_0 \left[\|(I-\rho e^{i\theta}A(\rho e^{i\theta}))^{-1}d\|^2 - \frac1{\rho^2} \|-d + (I- \rho e^{i\theta}A(\rho e^{i\theta}))^{-1}d\|^2\right]d\theta\\
&\quad= \frac1{\rho^2} \|d\|^2 + \left(1-\frac1{\rho^2}\right) \frac1{2\pi}\int^{2\pi}_0 \|(I-\rho e^{i\theta}A(\rho e^{i\theta}))^{-1}d\|^2 d\theta,
\end{align*}
from which \eqref{cfeq3.4} readily follows.
\end{proof}

\begin{lem}\label{cflem3.3}
 Let $d\in {\cl D}$ and set
\begin{equation}\label{cfeq3.5}
d(z) = (I-zA(z))^{-1}d = d_0 + zd_1 +\cdots+ z^nd_n+\cdots\quad (z\in {\bb D}),
\end{equation}
where $d_n\in {\cl D}$ and $d_0=d$. If
\begin{equation}\label{cfeq3.5a}
\|d_n\|\to 0\quad \text{for}\quad n\to\infty,\tag{3.5\text{a}}
\end{equation}
then we also have
\begin{equation}\label{cfeq3.5b}
I_\rho = \left(\frac1{\rho^2}-1\right) \frac1{2\pi} \int^{2\pi}_0 \|(I-\rho e^{i\theta}A(\rho e^{i\theta}))^{-1}d\|^2 d\theta\to 0 \quad \text{for}\quad \rho\nearrow 1.\tag{3.5\text{b}}
\end{equation}
\end{lem}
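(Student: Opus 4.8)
The plan is to recognize the integral as a single power series in $\rho$ and then to reduce \eqref{cfeq3.5b} to an elementary Abel-type limit.

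First I would note that, since $W(\cdot)$ is contractive, $\|A(z)\|\le 1$ on ${\bb D}$, so $I-zA(z)$ is boundedly invertible there (Neumann series) and the ${\cl D}$-valued function $d(\cdot)$ of \eqref{cfeq3.5} is analytic on ${\bb D}$; its Taylor series therefore has radius of convergence at least $1$, and in particular $\sum_{n\ge0}\rho^{2n}\|d_n\|^2<\infty$ for every $\rho\in(0,1)$. Since $(I-\rho e^{i\theta}A(\rho e^{i\theta}))^{-1}d=d(\rho e^{i\theta})=\sum_{n\ge0}\rho^{n}e^{in\theta}d_n$, Parseval's identity yields
\[
\frac1{2\pi}\int_0^{2\pi}\bigl\|(I-\rho e^{i\theta}A(\rho e^{i\theta}))^{-1}d\bigr\|^2\,d\theta=\sum_{n=0}^\infty\rho^{2n}\|d_n\|^2,
\]
so that
\[
I_\rho=\Bigl(\frac1{\rho^2}-1\Bigr)\sum_{n=0}^\infty\rho^{2n}\|d_n\|^2 .
\]

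It then remains to establish the following elementary fact: if $a_n\ge0$ and $a_n\to0$, then $(1-r)\sum_{n\ge0}r^na_n\to0$ as $r\nearrow1$. Applying this with $a_n=\|d_n\|^2$ and $r=\rho^2$ (the extra factor $\rho^{-2}\to1$ being harmless) gives \eqref{cfeq3.5b}. To prove the fact, fix $\varepsilon>0$, choose $N$ with $a_n<\varepsilon$ for $n\ge N$, and split
\[
(1-r)\sum_{n=0}^\infty r^na_n=(1-r)\sum_{n=0}^{N-1}r^na_n+(1-r)\sum_{n=N}^{\infty}r^na_n\le(1-r)\sum_{n=0}^{N-1}a_n+\varepsilon .
\]
Letting $r\nearrow1$ with $N$ fixed gives $\limsup_{r\nearrow1}(1-r)\sum_{n\ge0}r^na_n\le\varepsilon$, and then $\varepsilon\to0$ finishes the proof.

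I do not expect a genuine obstacle: the computation reducing $I_\rho$ to a power series is immediate from \eqref{cfeq3.5} and Parseval, and the remaining Tauberian-type step is standard. The only point deserving care is that the conclusion rests solely on the qualitative decay \eqref{cfeq3.5a}, with no quantitative hypothesis on the $d_n$; this is exactly why the tail is handled through the geometric-series bound $\sum_{n\ge N}r^n=r^N/(1-r)$, which absorbs the factor $(1-r)$, rather than by any direct estimate.
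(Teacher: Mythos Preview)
Your proof is correct and follows essentially the same route as the paper: both reduce $I_\rho$ via Parseval to $\bigl(\tfrac1{\rho^2}-1\bigr)\sum_{n\ge0}\rho^{2n}\|d_n\|^2$ (the paper writes the prefactor as $\tfrac{1+\rho}{\rho^2}(1-\rho)$), then split the sum at an index $N$ and control the tail by $\sup_{n\ge N}\|d_n\|^2$ using the geometric-series bound. Your presentation makes the Parseval step and the Abel-type lemma more explicit, but the argument is the same.
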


\begin{proof}
Observe that
\[
I_\rho = \frac{1+\rho}{\rho^2} (1-\rho) \sum^\infty_{n=0} \rho^{2n} \|d_n\|^2,
\]
so for any $N=1,2,\ldots$ we have
\[
\limsup_{\rho\nearrow 1} I_\rho \le 2 \limsup_{\rho\nearrow 1} (1-\rho) \sum^\infty_{n=N} \rho^{2n} \|d_n\|^2 \le \max \{\|d_n\|^2\colon \ n\ge N\},
\]
which is assumed to tends to 0 as $N\to\infty$.
\end{proof}

\begin{lem}\label{cflem3.4}
Let $d$ and $d(z)(z\in {\bb D})$ be as in Lemma \ref{cflem3.3}. Then the equality
 $\|\Gamma d\|^2_{H^2({\cl D}')} = \|d\|^2$ implies the convergence in \eqref{cfeq3.5a}.
\end{lem}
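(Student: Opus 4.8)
The plan is to prove the contrapositive via the identities already established. Starting from Lemma~\ref{cflem3.1} (equation \eqref{cfeq3.3}), the hypothesis $\|\Gamma d\|^2_{H^2({\cl D}')} = \|d\|^2$ forces
\[
\lim_{\rho\nearrow 1}\left[\left(\frac1{\rho^2}-1\right)\frac1{2\pi}\int_0^{2\pi}\|(I-\rho e^{i\theta}A(\rho e^{i\theta}))^{-1}d\|^2\,d\theta + \frac1{2\pi}\int_0^{2\pi}\|D_{W(\rho e^{i\theta})}(I-\rho e^{i\theta}A(\rho e^{i\theta}))^{-1}d\|^2\,d\theta\right] = 0.
\]
Both integrands are nonnegative, so in particular $I_\rho\to 0$, where $I_\rho$ is the quantity in \eqref{cfeq3.5b}. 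The task is therefore to deduce $\|d_n\|\to 0$ from $I_\rho\to 0$; this is the converse direction of Lemma~\ref{cflem3.3}.

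First I would write $I_\rho$ in terms of the Taylor coefficients, exactly as in the proof of Lemma~\ref{cflem3.3}: since $d(z) = \sum_{n\ge 0} z^n d_n$ with $\|d(\rho e^{i\theta})\|^2$ integrating to $\sum \rho^{2n}\|d_n\|^2$, we get
\[
I_\rho = \frac{1+\rho}{\rho^2}\,(1-\rho)\sum_{n=0}^\infty \rho^{2n}\|d_n\|^2 .
\]
Set $c_n = \|d_n\|^2 \ge 0$ and $s_n = c_0 + c_1 + \cdots + c_n$. Then, up to the bounded factor $(1+\rho)/\rho^2 \to 2$, the quantity $I_\rho$ is an Abel-type average $(1-\rho)\sum \rho^{2n} c_n$. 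The hypothesis is that this tends to $0$ as $\rho\nearrow 1$. I want to conclude $c_n \to 0$.

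The key observation is that the partial sums $s_n$ are \emph{nondecreasing} in $n$ (since $c_n\ge 0$), and a Tauberian-type monotonicity argument applies. Concretely, for fixed $N$ and all $\rho\in(0,1)$,
\[
I_\rho \ge \frac{1+\rho}{\rho^2}(1-\rho)\sum_{n=N}^{2N}\rho^{2n}c_n \ge \frac{1+\rho}{\rho^2}(1-\rho)\,\rho^{4N}(s_{2N}-s_{N-1}),
\]
and letting $\rho\nearrow 1$ gives $s_{2N} - s_{N-1}\le \frac12\limsup_\rho\bigl[\text{something}\bigr]$ — but this only controls block sums, not individual terms, so I instead argue as follows. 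Because $\sum_{n}\rho^{2n}c_n$ has all nonnegative terms, $c_n\,\rho^{2n}\le \sum_{m}\rho^{2m}c_m = \frac{\rho^2}{(1+\rho)(1-\rho)}I_\rho$ for every $n$; choosing for each $n$ the value $\rho = \rho_n$ with $1-\rho_n = 1/n$ (so $\rho_n^{2n}$ stays bounded below by a positive constant $e^{-2}$-ish) yields $c_n \le C\, n\, I_{\rho_n}$... which is too weak since $nI_{\rho_n}$ need not go to $0$. So the honest route is the classical one: from $(1-\rho)\sum\rho^{2n}c_n\to 0$ and $c_n\ge 0$ one gets, by the Hardy--Littlewood Tauberian theorem (whose one-sided hypothesis $c_n\ge 0 = o(1/n)$-substitute is exactly monotonicity of $s_n$), that $\frac1{n}s_n\to 0$; then a second application, or a direct blocking argument comparing $s_n$ and $s_{2n}$, upgrades this to $c_n\to 0$ because the $c_n$ are the increments of the slowly growing sequence $s_n$. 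I expect the main obstacle to be making this Tauberian step self-contained: the cleanest fix is to prove directly that $(1-\rho)\sum_{n\ge 0}\rho^{n}a_n\to 0$ with $a_n\ge 0$ implies $\frac1{N}\sum_{n<N}a_n\to 0$ (elementary, splitting the tail) and then to exploit that here the sequence is $a_n = c_n$ \emph{and} the hypothesis is even stronger, being $o(1-\rho)$ rather than merely $O(1)$, together with the extra term $\|D_{W(\rho e^{i\theta})}(\cdots)\|$ that also vanishes; combining Lemma~\ref{cflem3.2} (which rewrites the first limit using $D_{A(\rho e^{i\theta})}$) with $I_\rho\to 0$ shows $\frac1{2\pi}\int\|D_{A(\rho e^{i\theta})}(I-\rho e^{i\theta}A(\rho e^{i\theta}))^{-1}d\|^2\,d\theta\to 0$ as well, i.e. $\sum_n(\|d_{n-1}\|^2-\|d_n\|^2)$-type telescoping, which pins down $\|d_n\|^2$ as a decreasing-in-average sequence and forces $\|d_n\|\to 0$. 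I would assemble the argument in that order: (1) invoke \eqref{cfeq3.3} to extract $I_\rho\to 0$; (2) expand $I_\rho$ in Taylor coefficients; (3) run the elementary Tauberian estimate, using nonnegativity of $\|d_n\|^2$ and, if needed, the supplementary vanishing from Lemma~\ref{cflem3.2}, to conclude $\|d_n\|\to 0$.
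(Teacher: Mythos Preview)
Your approach has a genuine gap: you try to deduce $\|d_n\|\to 0$ from $I_\rho\to 0$ (the converse of Lemma~\ref{cflem3.3}), but that implication is \emph{false} in general for nonnegative sequences. For instance, take $c_n=1$ when $n$ is a perfect square and $c_n=0$ otherwise; then $(1-\rho)\sum_n\rho^{2n}c_n\sim C\sqrt{1-\rho}\to 0$, yet $c_n\not\to 0$. The Hardy--Littlewood theorem you invoke gives only $\tfrac1N\sum_{n<N}c_n\to 0$, and increments of an $o(N)$ sequence need not tend to zero. Your attempted rescue via Lemma~\ref{cflem3.2} is also miscomputed: if $I_\rho\to 0$, that lemma says the $D_{A}$-integral tends to $\|d\|^2$, not to $0$, and in any case all of these are Abel-type averages that cannot detect individual terms.

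The paper's proof avoids averaging entirely. Writing $\Gamma(z)d=\sum_n g_n z^n$ and observing that the two components of $W(z)d(z)$ have Taylor coefficients $(d_{n+1})_{n\ge 0}$ and $(g_n)_{n\ge 0}$, one applies the contraction $W(e^{i\theta})$ to the \emph{polynomial} $d_0+e^{i\theta}d_1+\cdots+e^{i(n-1)\theta}d_{n-1}$ and compares $L^2$-norms. After a telescoping cancellation this yields the pointwise inequality
\[
\|d_n\|^2+\sum_{j=0}^{n-1}\|g_j\|^2\le\|d_0\|^2=\|d\|^2,
\]
from which $\|d_n\|^2\le\|d\|^2-\sum_{j<n}\|g_j\|^2\to 0$ follows immediately once $\sum_j\|g_j\|^2=\|\Gamma d\|_{H^2}^2=\|d\|^2$. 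The essential ingredient you are missing is this finite-section use of the contractivity of $W$, which produces a bound on each individual $\|d_n\|$ rather than only on weighted averages.
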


\begin{proof}
Let
\[
 \Gamma(z)d = g_0 + zg_1 +\cdots+ z^ng_n +\cdots\qquad (z\in {\bb D}),
\]
where $g_n\in {\cl D}'$ and note that
\[
 W(z)d(z) =
\begin{bmatrix}
d_1 + zd_2+z^2d_2+\cdots\\
g_0 + zg_1 + z^2g_2+\cdots
\end{bmatrix}\qquad (z\in {\bb D}).
\]
Thus for $n=2,3,\ldots$, we have
\begin{align*}
&\left\|\begin{bmatrix}
         d_1+z d_2 +\cdots+ z^{n-1}d_n\\
g_0 + z g_1 +\cdots+ z^{n-1}g_{n-1}
        \end{bmatrix}\right\|^2_{H^2}  = \frac1{2\pi} \int^{2\pi}_0 \left\|
\begin{bmatrix}
d_1 + e^{i\theta}d_2 +\cdots + e^{i(n-1)\theta}d_{n}\\
g_0 + e^{i\theta}g_2 +\cdots+ e^{i(n-1)\theta}g_{n-1}
\end{bmatrix}\right\|^2 d\theta\\
&\quad = \|d_1\|^2 + \|d_2\|^2 +\cdots+ \|d_n\|^2 + \|g_0\|^2 +\cdots+ \|g_{n-1}\|^2\\
&\quad \le \frac1{2\pi} \int^{2\pi}_0 \|W(e^{i\theta}) (d_0+e^{i\theta} d_1 +\cdots+ e^{i(n-1)\theta}d_{n-1})\|^2 d\theta\\
&\quad\le \frac1{2\pi} \int^{2\pi}_0 \|d_0 + e^{i\theta}d_1 +\cdots+ e^{i(n-1)}d_{n-1}\|^2 d\theta\\
&\quad = \|d_0\|^2 + \|d_1\|^2 +\cdots+ \|d_{n-1}\|^2,
\end{align*}
where $d_0=d$. Thus we obtain
\[
 \|d_n\|^2 + \|g_0\|^2 +\cdots+ \|g_{n-1}\|^2 \le \|d\|^2.
\]
But since $\|\Gamma d\|^2_{H^2} = \|g_0\|^2 +\cdots+ \|g_{n-1}\|^2
+\cdots$, the above inequality and the assumption that
$\|\Gamma(\cdot)d\|_{H^2} = \|d\|$ implies \eqref{cfeq3.5a}.
\end{proof}

We can now state and prove the main result of this section.

\begin{prop}\label{cfprop3.1}
The following sets of properties {\rm (a), (b), (c)} and {\rm (d)} are equivalent:
\begin{itemize}
 \item[\rm (a)] $\Gamma(\cdot)$ is an isometry;
\item[\rm (b)] $W(\cdot)$ and $A(\cdot)$ satisfy the conditions
\end{itemize}
\begin{align}\label{cfeq3.6}
 &\lim_{\rho\nearrow 1} \int^{2\pi}_0 \|D_{W(\rho e^{i\theta})} (I-\rho e^{i\theta} A(\rho e^{i\theta}))^{-1}d\|^2 = 0\qquad (d\in {\cl D})\\
\intertext{and}
\label{cfeq3.6a}
&\lim_{\rho\nearrow 1} (1-\rho) \int^{2\pi}_0 \|(I-\rho e^{i\theta}A(\rho e^{i\theta})^{-1}d\|^2 = 0\qquad (d\in {\cl D});\tag{3.6\text{a}}
\end{align}
\begin{itemize}
 \item[\rm (c)] $W(\cdot)$ and $A(\cdot)$ satisfy the condition \eqref{cfeq3.6} and
\end{itemize}
\begin{equation}\label{cfeq3.6b}
\lim_{\rho\nearrow 1} \frac1{2\pi} \int^{2\pi}_0 \|D_{A(\rho e^{i\theta})} (I-\rho e^{i\theta}A(\rho e^{i\theta}))^{-1}d\|^2 d\theta = \|d\|^2\qquad (d\in {\cl D}); \text{ and}\tag{3.6\text{b}}
\end{equation}

\begin{itemize}
 \item[\rm (d)] $W(\cdot)$ and $A(\cdot)$
 satisfy \eqref{cfeq3.6}, and in the Taylor
  expansion
\[
(1-zA(z))^{-1} = I + zD_1 +\cdots+ z^nD_n+\cdots\qquad (z\in {\bb D})
\]
we have $D_n\to 0$ strongly {\rm(}that is, $\lim_{n\to\infty} \|D_nd\|= 0$  for all $d\in {\cl D}${\rm)}.
\end{itemize}
\end{prop}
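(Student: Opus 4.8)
The plan is to prove the four sets of conditions equivalent by establishing the cycle $(a)\Leftrightarrow(b)$, $(b)\Leftrightarrow(c)$, $(a)\Rightarrow(d)$ and $(d)\Rightarrow(b)$; all the analytic substance has already been isolated in Lemmas~\ref{cflem3.1}--\ref{cflem3.4} and Corollary~\ref{cfcor3.1}, so the task is to assemble these and to keep track of the harmless differences of normalization (the $1/2\pi$ factors, and $1/\rho^2-1$ versus $1-\rho$, which are comparable as $\rho\nearrow1$). For $(a)\Leftrightarrow(b)$, recall from Corollary~\ref{cfcor3.1} that for each $d\in{\cl D}$ the number $\|D_{\Gamma(\cdot)}d\|^2$ is the limit as $\rho\nearrow1$ of the sum of
\[
 \frac1{2\pi}\int_0^{2\pi}\|D_{W(\rho e^{i\theta})}(I-\rho e^{i\theta}A(\rho e^{i\theta}))^{-1}d\|^2\,d\theta
 \quad\text{and}\quad
 \Big(\frac1{\rho^2}-1\Big)\frac1{2\pi}\int_0^{2\pi}\|(I-\rho e^{i\theta}A(\rho e^{i\theta}))^{-1}d\|^2\,d\theta ,
\]
both of which are nonnegative. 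Since $\Gamma(\cdot)$ is an isometry exactly when $\|D_{\Gamma(\cdot)}d\|=0$ for every $d$, and since a limit of a sum of two nonnegative quantities vanishes if and only if each summand tends to $0$, condition $(a)$ is equivalent to the simultaneous vanishing of the two limits above; the first is \eqref{cfeq3.6}, and the second is equivalent to \eqref{cfeq3.6a}. Hence $(a)\Leftrightarrow(b)$.

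Condition $(c)$ differs from $(b)$ only in that \eqref{cfeq3.6a} is replaced by \eqref{cfeq3.6b}, so for $(b)\Leftrightarrow(c)$ it suffices to show \eqref{cfeq3.6a}$\Leftrightarrow$\eqref{cfeq3.6b}. By Lemma~\ref{cflem3.2}, the defect $\lim_{\rho\nearrow1}\bigl[\|d\|^2-\frac1{2\pi}\int_0^{2\pi}\|D_{A(\rho e^{i\theta})}(I-\rho e^{i\theta}A(\rho e^{i\theta}))^{-1}d\|^2\,d\theta\bigr]$ coincides with $\lim_{\rho\nearrow1}\bigl(\frac1{\rho^2}-1\bigr)\frac1{2\pi}\int_0^{2\pi}\|(I-\rho e^{i\theta}A(\rho e^{i\theta}))^{-1}d\|^2\,d\theta$; the former is $0$ precisely when \eqref{cfeq3.6b} holds, and the latter is $0$ precisely when \eqref{cfeq3.6a} holds, which settles $(b)\Leftrightarrow(c)$. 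For the remaining equivalence, note that since $(I-zA(z))^{-1}=I+zD_1+\cdots$, the vector $d(z)=(I-zA(z))^{-1}d=d_0+zd_1+\cdots$ of Lemma~\ref{cflem3.3} has coefficients $d_n=D_nd$, so the strong convergence $D_n\to0$ appearing in $(d)$ is exactly the requirement that \eqref{cfeq3.5a} hold for every $d\in{\cl D}$. If $(a)$ holds, then $\|\Gamma d\|^2_{H^2({\cl D}')}=\|d\|^2$ for all $d$, so Lemma~\ref{cflem3.4} gives \eqref{cfeq3.5a} for all $d$; together with \eqref{cfeq3.6} (valid since $(a)\Rightarrow(b)$) this is precisely $(d)$. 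Conversely, if $(d)$ holds, then \eqref{cfeq3.5a} holds for each $d$, so Lemma~\ref{cflem3.3} yields $I_\rho\to0$, i.e. \eqref{cfeq3.6a}; combined with the condition \eqref{cfeq3.6} contained in $(d)$, this gives $(b)$, and the cycle closes.

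The only point that needs a word of justification is the step $(a)\Leftrightarrow(b)$: there one must know that, the two summands in Corollary~\ref{cfcor3.1} being nonnegative and their sum having a finite limit (at most $\|d\|^2$, by that corollary), the sum tends to $0$ if and only if each summand does. This is what allows passing freely between $(a)$ and the pair \eqref{cfeq3.6}, \eqref{cfeq3.6a}; I do not anticipate a genuine obstacle here, since everything else reduces to identifying each displayed limit with the corresponding condition \eqref{cfeq3.6}, \eqref{cfeq3.6a} or \eqref{cfeq3.6b} and quoting the appropriate lemma of this section.
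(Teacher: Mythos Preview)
Your argument is correct and follows the same route as the paper: $(a)\Leftrightarrow(b)$ via Corollary~\ref{cfcor3.1}, $(b)\Leftrightarrow(c)$ via Lemma~\ref{cflem3.2}, and the passage between $(a)$ and $(d)$ via Lemmas~\ref{cflem3.3} and~\ref{cflem3.4} together with the identification $d_n=D_nd$. Your closing remark on splitting the nonnegative sum in Corollary~\ref{cfcor3.1} is exactly the point the paper leaves implicit, and your attribution of the lemmas to the two directions $(a)\Rightarrow(d)$ and $(d)\Rightarrow(b)$ is the correct one.
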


\begin{proof}
The equivalence of the properties (a) and (b) follows directly from the Corollary \ref{cfcor3.1}. This corollary and Lemma \ref{cflem3.4} show that the property (d) implies the property (a). The converse implication follows readily from the same corollary and Lemma \ref{cflem3.3}. Finally, Lemma \ref{cflem3.2} shows that the conditions \eqref{cfeq3.6a} and \eqref{cfeq3.6b} are equivalent and thus so are the sets of properties (b) and (c).
\end{proof}

\begin{cor}\label{cfcor3.2}
Assume that $W(z_0) = 0$ for some $z_0\in {\bb D}$. Then $\Gamma(\cdot)$ is an isometry if and only if \eqref{cfeq3.6} is valid.
\end{cor}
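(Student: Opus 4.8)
The plan is to lean entirely on Proposition~\ref{cfprop3.1}. That proposition says that $\Gamma(\cdot)$ is an isometry exactly when \emph{both} \eqref{cfeq3.6} and \eqref{cfeq3.6a} hold; and since \eqref{cfeq3.6} is part of property (b), which is equivalent to property (a), condition \eqref{cfeq3.6} is in any case necessary for $\Gamma(\cdot)$ to be an isometry. So the whole content of the corollary is the implication: \emph{if $W(z_0)=0$, then \eqref{cfeq3.6} forces \eqref{cfeq3.6a}.} The hypothesis $W(z_0)=0$ will enter only through a Schwarz--Pick bound on the defect of $W$.

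First I would record that bound. For unit vectors $d\in{\cl D}$ and $d'\in{\cl D}\oplus{\cl D}'$ the scalar function $z\mapsto\langle W(z)d,d'\rangle$ is analytic on ${\bb D}$, bounded by $1$ in modulus, and vanishes at $z_0$; hence by the Schwarz--Pick lemma it is majorized by $|b_{z_0}(z)|$, where $b_{z_0}(z)=(z_0-z)/(1-\bar z_0 z)$. Taking the supremum over $d$ and $d'$ gives $\|W(z)\|\le|b_{z_0}(z)|$ for all $z\in{\bb D}$, so $I-W(z)^*W(z)\ge\bigl(1-|b_{z_0}(z)|^2\bigr)I$, i.e.
\[
\|D_{W(z)}v\|^2\ \ge\ \bigl(1-|b_{z_0}(z)|^2\bigr)\|v\|^2\qquad(v\in{\cl D},\ z\in{\bb D}).
\]
Now invoke the elementary identity $1-|b_{z_0}(z)|^2=(1-|z_0|^2)(1-|z|^2)/|1-\bar z_0 z|^2$. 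Since $|1-\bar z_0 z|\le 1+|z_0|$ on ${\bb D}$ and $1-|z|^2\ge 1-|z|$, this yields $1-|b_{z_0}(z)|^2\ge c\,(1-|z|)$ with $c=(1-|z_0|)/(1+|z_0|)>0$, whence
\[
\|D_{W(z)}v\|^2\ \ge\ c\,(1-|z|)\,\|v\|^2\qquad(v\in{\cl D},\ z\in{\bb D}).
\]

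To conclude, I would apply this last inequality with $z=\rho e^{i\theta}$ and $v=(I-\rho e^{i\theta}A(\rho e^{i\theta}))^{-1}d$ (the inverse exists since $\|\rho e^{i\theta}A(\rho e^{i\theta})\|\le\rho<1$) and integrate over $\theta\in[0,2\pi]$:
\[
\int_0^{2\pi}\bigl\|D_{W(\rho e^{i\theta})}(I-\rho e^{i\theta}A(\rho e^{i\theta}))^{-1}d\bigr\|^2\,d\theta\ \ge\ c\,(1-\rho)\int_0^{2\pi}\bigl\|(I-\rho e^{i\theta}A(\rho e^{i\theta}))^{-1}d\bigr\|^2\,d\theta.
\]
If \eqref{cfeq3.6} holds, the left-hand side tends to $0$ as $\rho\nearrow1$; since $c>0$ the right-hand side does too, and that is exactly \eqref{cfeq3.6a}. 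Hence $W(\cdot)$ and $A(\cdot)$ satisfy property (b) of Proposition~\ref{cfprop3.1}, so $\Gamma(\cdot)$ is an isometry. The reverse implication — that an isometric $\Gamma(\cdot)$ satisfies \eqref{cfeq3.6} — is immediate from the same proposition and uses nothing about $z_0$. I do not foresee a genuine obstacle here; the only point needing care is that the Schwarz--Pick estimate be uniform on each circle $|z|=\rho$, and this is precisely what the boundedness of $1/|1-\bar z_0 z|$ provides, which is what lets the pointwise defect estimate be integrated.
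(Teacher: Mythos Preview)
Your proof is correct and follows essentially the same approach as the paper: both use the Schwarz lemma to bound $\|W(z)\|$ by the Blaschke factor $|z-z_0|/|1-\bar z_0 z|$, deduce a lower bound of the form $\|D_{W(z)}v\|^2\ge c(1-|z|)\|v\|^2$, and then integrate to show that \eqref{cfeq3.6} dominates \eqref{cfeq3.6a}. The only differences are cosmetic (your constant $c=(1-|z_0|)/(1+|z_0|)$ versus the paper's $(1-|z_0|^2)(1+\rho)/(1+|z_0|\rho)^2$).
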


\begin{proof}
We have
\[
\|W(z)\| \le |z-z_0|/|1-\bar z_0z|\qquad (z\in {\bb D})
\]
and consequently, for all $d\in {\cl D}$, we also have
\[
\|D_{W(z)}d\|^2 \ge (1-|z-z_0|^2/|1-\bar z_0z|^2)\|d\|^2\ge ((1-|z_0|^2)/(1+|z_0z|)^2) (1-|z|^2) \|d\|^2\quad (z\in {\bb D}).
\]
It follows that
\[
\int^{2\pi}_0 \|D_{W(\rho e^{i\theta})} (I-\rho e^{i\theta}A(\rho e^{i\theta}))^{-1}d\|^2 \ge \frac{(1-|z_0|^2)(1+\rho)}{(1+|z_0|\rho)^2} (1-\rho) \int^{2\pi}_0 \|(I-\rho e^{i\theta} A(\rho e^{i\theta}))^{-1} d\|^2 d\theta
\]
for all $d\in {\cl D}$. Thus \eqref{cfeq3.6} implies \eqref{cfeq3.6a}.
\end{proof}

Another special case of Proposition \ref{cfprop3.1} is given by the following.

\begin{cor}\label{cfcor3.3}
Assume that $W(z) \equiv W_0\in{\cl L}({\cl D},{\cl D}\oplus{\cl D}')$ $(z\in {\bb D})$. Then $\Gamma(\cdot)$ is an isometry if and only if
\begin{itemize}
\item[\rm (d$'$)] $W_0$ is an isometry and $A_0 = A(0) (=A(z)(z\in {\bb D}))$ is a $C_{0\bullet}$-contraction; that is,
\end{itemize}
\begin{equation}\label{cfeq3.7}
\|A^n_0d\|\to 0\quad \text{for}\quad n\to\infty\qquad (d\in {\cl D}).
\end{equation}
\end{cor}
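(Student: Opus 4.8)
The plan is to deduce Corollary~\ref{cfcor3.3} from Proposition~\ref{cfprop3.1}, using the equivalence of properties (a) and (d) together with the simplification afforded by $W(z)\equiv W_0$. First I would record what the constant case does to the two ingredients of (d). Since $W(z)\equiv W_0$ we have $D_{W(\rho e^{i\theta})}=D_{W_0}$ independent of $\theta$ and $\rho$, and likewise $A(z)\equiv A_0$, so that $(I-zA(z))^{-1}=(I-zA_0)^{-1}$. Writing $(I-zA_0)^{-1}=\sum_{n\ge 0}z^nA_0^n$ we see immediately that the coefficients $D_n$ in the Taylor expansion of $(I-zA(z))^{-1}$ are exactly $D_n=A_0^n$. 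Hence the strong-convergence clause in (d), namely $D_nd\to 0$ for all $d$, is literally the assertion \eqref{cfeq3.7} that $A_0$ is a $C_{0\bullet}$-contraction.

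Next I would treat condition \eqref{cfeq3.6} in the constant case. Because the integrand factors, we compute
\[
\int_0^{2\pi}\|D_{W_0}(I-\rho e^{i\theta}A_0)^{-1}d\|^2\,d\theta
= 2\pi\sum_{n\ge 0}\rho^{2n}\|D_{W_0}A_0^nd\|^2 ,
\]
using the orthogonality of the monomials $e^{in\theta}$ in $L^2$. If $W_0$ is an isometry then $D_{W_0}=0$, so the left-hand side vanishes for every $\rho$ and \eqref{cfeq3.6} holds trivially; combined with the previous paragraph, (d$'$) implies property (d), hence by Proposition~\ref{cfprop3.1} that $\Gamma(\cdot)$ is an isometry. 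For the converse I would argue that if $\Gamma(\cdot)$ is an isometry then (d) holds, so in particular \eqref{cfeq3.6} holds. Taking the limit $\rho\nearrow 1$ in the displayed series and using monotone convergence (all terms are nonnegative and increasing in $\rho$) forces $\sum_{n\ge 0}\|D_{W_0}A_0^nd\|^2=0$ for every $d\in\cl D$; in particular (the $n=0$ term) $D_{W_0}d=0$ for all $d$, i.e.\ $D_{W_0}=0$, which is exactly the statement that $W_0$ is an isometry. The strong-convergence clause of (d) then gives $A_0^nd\to 0$, i.e.\ \eqref{cfeq3.7}, completing the equivalence.

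The only point requiring a little care — and the step I would flag as the main (though still modest) obstacle — is the passage to the limit $\rho\nearrow 1$ in the integral identity: one must justify interchanging $\lim_{\rho\nearrow1}$ with the infinite sum, and note that \eqref{cfeq3.6} is a statement about the limit rather than about individual $\rho$. This is handled cleanly by monotone convergence since each $\rho^{2n}\|D_{W_0}A_0^nd\|^2$ increases to $\|D_{W_0}A_0^nd\|^2$ as $\rho\nearrow 1$, so $\lim_{\rho\nearrow1}\int_0^{2\pi}\|D_{W_0}(I-\rho e^{i\theta}A_0)^{-1}d\|^2\,d\theta = 2\pi\sum_{n\ge 0}\|D_{W_0}A_0^nd\|^2$, with the value $+\infty$ allowed; then \eqref{cfeq3.6} is equivalent to the vanishing of every summand. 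Everything else is bookkeeping, and the whole corollary is really just the observation that in the constant case the Taylor coefficients $D_n$ of $(I-zA(z))^{-1}$ coincide with the powers $A_0^n$, so that Proposition~\ref{cfprop3.1}(d) unwinds verbatim into (d$'$).
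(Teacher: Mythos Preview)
Your proof is correct and follows essentially the same route as the paper: both deduce the result from the equivalence (a)$\Leftrightarrow$(d) in Proposition~\ref{cfprop3.1}, noting that $D_n=A_0^n$ and that in the constant case \eqref{cfeq3.6} forces $D_{W_0}=0$. The paper phrases the latter step as ``$D_{W_0}(I-zA_0)^{-1}d\in H^2({\cl D})$, so \eqref{cfeq3.6} forces it to vanish identically,'' which is exactly your monotone-convergence computation of $\lim_{\rho\nearrow1}\sum_{n\ge0}\rho^{2n}\|D_{W_0}A_0^nd\|^2$ written more concisely.
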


\begin{proof}
In this case $D_{W(z)}(1-zA(z))^{-1}d = D_{W_0}(1-zA(z))^{-1}d$ (as a ${\cl D}$-valued function of $z$) belongs to $H^2({\cl D})$ for any $d\in {\cl D}$. Therefore, \eqref{cfeq3.6} implies that $D_{W_0}(1-zA(z))^{-1}d = 0$ $(z\in {\bb D})$ and, in particular, $D_{W_0}=0$; that is, $W_0$ is an isometry. Clearly, if this last property holds for  $W_0$, then \eqref{cfeq3.6} is trivially true. According to the equivalence of the properties (a) and (d) in Proposition \ref{cfprop3.1} and the fact that in the present case $D_n = A^n_0$ $(n=0,1,\ldots)$, the property (d$'$) above is equivalent to (a).
\end{proof}

\begin{rk}\label{cfrk3.1}
In Corollary \ref{cfcor3.3}, the condition \eqref{cfeq3.7} is not superfluous. Indeed, if $A_0$ is any contraction for which \eqref{cfeq3.7} fails,
 then  define $W = W_0 = \left[\begin{smallmatrix} C\\ D_C\end{smallmatrix}\right]$.
 This $W$ is an isometry but $\Gamma(\cdot)$ is not isometric. Thus, in general,
 the (actually equivalent) conditions \eqref{cfeq3.6a} and \eqref{cfeq3.6b} are not superfluous.
\end{rk}

Proposition \ref{cfprop3.1} has an interesting connection to the
Herglotz representation (cf. \cite[p. 3]{duren}) of an analytic
operator-valued function
\[
F(z)\in {\cl L}({\cl H}),\qquad z\in {\bb D}
\]
(where ${\cl H}$ is a Hilbert space) such that
\begin{equation}\label{cfeq3.8}
 \text{Re } F(z) (= (F(z) + F(z)^*)/2) \ge 0, \quad \text{Im } F(0) = \left(\frac{F(0)-F(0)^*}2\right)= 0.
\end{equation}
This representation is
\begin{equation}\label{cfeq3.8a}
 F(z) = \intl_{\partial{\bb D}} \frac{\zeta+z}{\zeta-z} E(d\zeta)\qquad (z\in {\bb D}),\tag{3.8a}
\end{equation}
where $E(\cdot)$ is a  positive operator-valued measure on $\partial{\bb D} = \{\zeta := |\zeta| = 1\}$, uniquely determined by $F$
 (an early occurrence of this representation is in \cite[Theorem 3]{livsic} ).

To explicate that connection, we first observe that the function
\begin{equation}\label{cfeq3.8b}
F(z) = (I + zA(z)) (I-zA(z))^{-1}\qquad (z\in {\bb D}),\tag{3.8b}
\end{equation}
satisfies the inequality \eqref{cfeq3.8}. Indeed, we have (with $d(z) = (1-zA(z))^{-1}d, z\in {\bb D}$)
\begin{align}\label{cfeq3.8c}
h_d(z) := ((\text{Re } F(z))d,d) &= \text{Re}(F(z)d,d) = \text{Re}((I + zA(z)) d(z), (I-zA(z)) d(z)) =\tag{3.8c}\\
&= \|d(z)\|^2 - \|zAd(z)\|^2 = \|D_{zA(z)}d(z)\|^2\ge 0.\notag
\end{align}
Thus  our particular $F(\cdot)$ has a representation of the form
 \eqref{cfeq3.8a}. Now from \eqref{cfeq3.8a}
we easily infer
\begin{equation}\label{cfeq3.8d}
h_d(z) = \intl_{\partial{\bb D}} \text{Re } \frac{\zeta+z}{\zeta-z} (E(d\zeta)d,d)\qquad (z\in {\bb D}).\tag{3.8d}
\end{equation}
Therefore
\begin{align}\label{cfeq3.8e}
\frac1{2\pi} \intl^{2\pi}_0 h_d(\rho e^{i\theta}) d\theta  &= \intl_{\partial D} \left(\frac1{2\pi} \intl^{2\pi}_0 \text{Re } \frac{\zeta+\rho e^{i\theta}}{\zeta-\rho e^{i\theta}} d\theta\right) (E(d\zeta)d,d)=\tag{3.8e}\\
&= \intl_{\partial {\bb D}} (E(d\zeta)d,d) = \|d\|^2 \quad \text{(for $\rho\in (0,1)$).}\notag
\end{align}
Since $h_d(z)$ is a nonnegative harmonic function in ${\bb D}$, the
limit
\begin{equation}\label{cfeq3.8f}
 \lim_{\rho\nearrow 1} h_d(\rho e^{i\theta}) = h_d(e^{i\theta}) \tag{3.8f}
\end{equation}
exists a.e.\ in $[0,2\pi)$, and the absolutely continuous part of the measure $(E(\cdot)d,d)$ has
 density equal to $h_d(e^{i\theta})/2\pi$ a.e.\ (cf. \cite[p.5-6]{duren} or  \cite[Chapter 2]{hoffman})

It follows that the singular part $\mu_d(\cdot)$ of $(E(\cdot)d,d)$ satisfies
\begin{equation}\label{cfeq3.8g}
 \mu_d(\partial{\bb D}) = \|d\|^2 - \intl^{2\pi}_0 h_d(e^{i\theta}) \frac{d\theta}{2\pi}.\tag{3.8g}
\end{equation}
Consequently, the  following facts (a) and (b) are equivalent:
\begin{itemize}
 \item[(a)] the measure $E(\cdot)$ is absolutely continuous; and
\item[(b)] the relation
\end{itemize}
\begin{equation}\label{cfeq3.8h}
 \|d\|^2 = \intl^{2\pi}_0 h_d(e^{i\theta}) \frac{d\theta}{2\pi}\tag{3.8h}
\end{equation}
\begin{itemize}
\item[] holds for all $d\in {\cl D}$.
\end{itemize}

We recall that the function $\Gamma d(z)=\Gamma(z)d$ is in $H^2({\cl
D}')$ for every $d\in{\cl D}$.  In particular, this function has
radial limits a.e. on $\partial\bb D$.

\begin{lem}\label{cflem3.5}
Let $d\in {\cl D}$ be fixed, and define
\begin{equation}\label{cfeq3.8i}
 k_d(z) = \frac{1-|z|^2}{|z|^2} \|d(z)\|^2 + \frac1{|z|^2} \|D_{W(z)}d(z)\|^2\qquad (z\in {\bb D}\backslash \{0\}),\tag{3.8i}
\end{equation}
where
\begin{equation}\label{cfeq3.8ia}
 d(z) = (1-zA(z))^{-1}d\qquad (z\in {\bb D}).\tag{3.8ia}
\end{equation}
Then
\begin{equation}\label{cfeq3.8j}
 \lim_{\varphi\nearrow 1} k_d(\rho e^{i\theta}) := k_d(e^{i\theta})\tag{3.8j}
\end{equation}
exists a.e.\ and
\begin{equation}\label{cfeq3.8k}
 k_d(e^{i\theta}) + \|\Gamma d(e^{i\theta})\|^2 = h_d(e^{i\theta})\quad \text{a.e.}\tag{3.8k}
\end{equation}
\end{lem}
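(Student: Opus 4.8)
The plan is to identify $k_d(e^{i\theta})$ with a radial limit of a harmonic (or at least nicely behaved) function built from the data, and then to recognize the three-term identity \eqref{cfeq3.8k} as a pointwise a.e.\ consequence of a global identity that already holds for every $\rho\in(0,1)$. The natural candidate for that global identity comes from combining Lemma \ref{cflem3.1} and the Herglotz computation \eqref{cfeq3.8c}: for $z=\rho e^{i\theta}$ with $\rho\in(0,1)$ we have, by a direct algebraic manipulation of norms (the same one underlying the proof of Lemma \ref{cflem3.1}),
\begin{equation}\label{cfeq3.8-pf1}
 \frac1{\rho^2}\|\Gamma(\rho e^{i\theta})d\|^2 + k_d(\rho e^{i\theta}) = \|d(\rho e^{i\theta})\|^2 + \frac{1}{\rho^2}\|\Gamma(\rho e^{i\theta})d\|^2 - \|A(\rho e^{i\theta})d(\rho e^{i\theta})\|^2 + \frac{1-\rho^2}{\rho^2}\|d(\rho e^{i\theta})\|^2,
\end{equation}
which simplifies (using $\Gamma(z)d = B(z)d(z)$, $\|W(z)d(z)\|^2 = \|A(z)d(z)\|^2 + \|B(z)d(z)\|^2$, and $\|D_{W(z)}d(z)\|^2 = \|d(z)\|^2 - \|W(z)d(z)\|^2$) to an identity whose limit as $\rho\nearrow 1$ is precisely $k_d(e^{i\theta}) + \|\Gamma d(e^{i\theta})\|^2 = h_d(e^{i\theta})$. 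So the core of the argument is to write down this one-line identity at each $\rho$ and pass to the boundary.

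First I would verify the existence of the limit \eqref{cfeq3.8j}. The term $\frac{1-|z|^2}{|z|^2}\|d(z)\|^2$ tends to $h_d(e^{i\theta}) - \|D_{zA(z)}d(z)\|^2|_{z=e^{i\theta}}$-type quantities only after regrouping; the cleaner route is to observe directly that $k_d(z)$ equals $h_d(z) - \|\Gamma(z)d\|^2/|z|^2$ for all $z\in\bb D\setminus\{0\}$, by the algebraic identity sketched above. Indeed, expand $\text{Re}((I+zA(z))d(z),(I-zA(z))d(z)) = \|d(z)\|^2 - \|zA(z)d(z)\|^2$; then rewrite $\|zA(z)d(z)\|^2$ using $zA(z)d(z) = d(z) - d$, giving $\|zA(z)d(z)\|^2 = \|d(z)\|^2 - 2\,\text{Re}(d(z),d) + \|d\|^2$; and separately use $\|D_{W(z)}d(z)\|^2 = \|d(z)\|^2 - \|A(z)d(z)\|^2 - \|\Gamma(z)d\|^2$. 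Assembling these — exactly the bookkeeping done inside the proof of Lemma \ref{cflem3.1} — yields the clean pointwise relation $h_d(z) = k_d(z) + \|\Gamma(z)d\|^2/|z|^2$ on $\bb D\setminus\{0\}$.

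Once that pointwise identity is in hand, the conclusion is immediate: $h_d(z)$ is a nonnegative harmonic function, so by \eqref{cfeq3.8f} its radial limit $h_d(e^{i\theta})$ exists a.e.; the function $\Gamma d$ belongs to $H^2(\cl D')$, so its radial limit $\Gamma d(e^{i\theta})$ exists a.e.\ and the prefactor $1/|z|^2 \to 1$; hence $k_d(\rho e^{i\theta}) = h_d(\rho e^{i\theta}) - \|\Gamma(\rho e^{i\theta})d\|^2/\rho^2$ has a radial limit a.e., namely $k_d(e^{i\theta}) = h_d(e^{i\theta}) - \|\Gamma d(e^{i\theta})\|^2$, which is \eqref{cfeq3.8k}. (One should also note $k_d\ge 0$, so the a.e.\ limit is a genuine nonnegative function, consistent with the later use of this lemma.)

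The main obstacle is purely organizational rather than deep: one must carry out the norm bookkeeping in \eqref{cfeq3.8-pf1} cleanly enough that the factors of $1/\rho^2$ and $1-\rho^2$ cancel against each other in the limit without any hidden boundary-integrability issue — but since we are working at a fixed interior radius $\rho<1$ and only passing to the radial limit at the very end (not integrating in $\theta$), no uniform estimate or dominated-convergence argument is needed here, and the existence of the two radial limits $h_d(e^{i\theta})$ and $\Gamma d(e^{i\theta})$ is already guaranteed by classical $H^p$/Herglotz theory as recalled above.
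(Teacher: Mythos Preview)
Your approach is essentially the same as the paper's: establish the pointwise algebraic identity relating $k_d(z)$, $\|\Gamma(z)d\|^2$, and $h_d(z)$ on $\bb D\setminus\{0\}$, and then pass to the radial limit using the a.e.\ existence of $h_d(e^{i\theta})$ (from Herglotz/harmonic theory, \eqref{cfeq3.8f}) and of $\Gamma d(e^{i\theta})$ (since $\Gamma d\in H^2(\cl D')$). The paper records the identity as \eqref{cfeq3.8l}, namely $k_d(z)+\|\Gamma(z)d\|^2=\tfrac{1}{|z|^2}h_d(z)$, obtained from the one-line computation $\|D_{W(z)}d(z)\|^2+\|\Gamma(z)d\|^2=\|D_{A(z)}d(z)\|^2=(1-\tfrac{1}{|z|^2})\|d(z)\|^2+\tfrac{1}{|z|^2}h_d(z)$; your version places the factor $1/|z|^2$ differently, but since $1/|z|^2\to 1$ radially this is immaterial for \eqref{cfeq3.8j}--\eqref{cfeq3.8k}.
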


\begin{proof}
We first observe that
\begin{equation}\label{cfeq3.8l}
 k_d(z) +\|\Gamma(z)d\|^2 = \frac1{|z|^2} h_d(z)\qquad (z\in {\bb D}\backslash\{0\}).\tag{3.8l}
\end{equation}
Indeed, for $z\in {\bb D}, z\ne 0$ we have
\begin{align*}
\|D_{W(z)}d(z)\|^2 + \|\Gamma(z)d\|^2 &= \|D_{A(z)}d\|^2 = \|d(z)\|^2 \\
&\quad - \frac1{|z|^2} \|zA(z)d(z)\|^2 = \left(1-\frac1{|z|^2}\right) \|d(z)\|^2 \\
&\quad + \frac1{|z|^2} \|D_{zA(z)}d(z)\|^2 = \left(1- \frac1{|z|^2}\right) \|d(z)\|^2 + \frac1{|z|^2} h_d(z).
\end{align*}
Thus \eqref{cfeq3.8f} and \eqref{cfeq3.8l} imply \eqref{cfeq3.8j}
 and \eqref{cfeq3.8k}.
\end{proof}

We can now give the following complement to Proposition
\ref{cfprop3.1} which establishes the connection between the
absolute continuity of the measure $E(\cdot)$ in \eqref{cfeq3.8a}
and the fact that $\Gamma$ is an isometry when viewed as an operator
from ${\cl D}$ to $H^2({\cl D}')$.

\begin{prop}\label{cfprop3.2}
The operator $\Gamma\in {\cl L}({\cl D}, H^2({\cl D}'))$ is an
isometry if and only if the following two conditions are satisfied:
\begin{align}\label{cfeq3.8m}
&\text{The measure $E(\cdot)$ in the representation \eqref{cfeq3.8a} of the function $F(\cdot)$ defined in \eqref{cfeq3.8b}} \tag{3.8m}\\
&\text{is absolutely continuous, and}\notag
\end{align}
\begin{equation}\label{cfeq3.8n}
 k_d(e^{i\theta}) = 0\quad\text{a.e.}\qquad (d\in {\cl D}),\tag{3.8n}
\end{equation}
where $k_d(\cdot)$ is defined in \eqref{cfeq3.8i}, \eqref{cfeq3.8j}.
\end{prop}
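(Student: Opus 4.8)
The plan is to combine the integrated identity of Lemma~\ref{cflem3.1} with the pointwise (radial-limit) identity of Lemma~\ref{cflem3.5} and the Herglotz analysis preceding it. First I would record that, by Corollary~\ref{cfcor3.1}, $\Gamma$ is an isometry precisely when $\|D_{\Gamma(\cdot)}d\|=0$ for all $d\in{\cl D}$, i.e. when the right-hand side of \eqref{cfeq3.3a} vanishes for every $d$. Thus the task is to show that the vanishing of
\[
\lim_{\rho\nearrow1}\frac1{2\pi}\int_0^{2\pi}\Bigl[\|D_{W(\rho e^{i\theta})}d(\rho e^{i\theta})\|^2+\Bigl(\tfrac1{\rho^2}-1\Bigr)\|d(\rho e^{i\theta})\|^2\Bigr]d\theta
\]
(using the notation $d(z)=(1-zA(z))^{-1}d$ from \eqref{cfeq3.8ia}) is equivalent to the conjunction of \eqref{cfeq3.8m} and \eqref{cfeq3.8n}.

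The key step is to integrate the pointwise identity \eqref{cfeq3.8l}. Since $\rho^2 k_d(\rho e^{i\theta})+\rho^2\|\Gamma(\rho e^{i\theta})d\|^2=h_d(\rho e^{i\theta})$ and the integrand of the displayed limit is exactly $\rho^2 k_d(\rho e^{i\theta})$ after rearrangement, we get
\[
\frac1{2\pi}\int_0^{2\pi}\rho^2 k_d(\rho e^{i\theta})\,d\theta
=\frac1{2\pi}\int_0^{2\pi}h_d(\rho e^{i\theta})\,d\theta-\rho^2\frac1{2\pi}\int_0^{2\pi}\|\Gamma(\rho e^{i\theta})d\|^2\,d\theta
=\|d\|^2-\rho^2\|\Gamma_\rho d\|^2,
\]
where the first integral is $\|d\|^2$ by \eqref{cfeq3.8e} and the second tends to $\|\Gamma d\|_{H^2}^2$ as $\rho\nearrow1$ since $\Gamma d\in H^2({\cl D}')$. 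Hence $\lim_{\rho\nearrow1}\frac1{2\pi}\int_0^{2\pi}k_d(\rho e^{i\theta})\,d\theta=\|d\|^2-\|\Gamma d\|_{H^2}^2=\|D_{\Gamma(\cdot)}d\|^2$. So $\Gamma$ is an isometry iff $\int_0^{2\pi}k_d(\rho e^{i\theta})\,d\theta\to0$ for all $d$; since $k_d\ge0$, this is an $L^1$-convergence-of-means statement for the nonnegative functions $k_d(\rho\,\cdot)$.

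Now I would bring in the Herglotz measure. Integrating \eqref{cfeq3.8k} together with the relation $k_d(e^{i\theta})+\|\Gamma d(e^{i\theta})\|^2=h_d(e^{i\theta})$ a.e. and Fatou's lemma gives $\int_0^{2\pi}k_d(e^{i\theta})\frac{d\theta}{2\pi}\le\|d\|^2-\|\Gamma d\|_{H^2}^2$, while comparing with \eqref{cfeq3.8g}--\eqref{cfeq3.8h} identifies $\|d\|^2-\int_0^{2\pi}h_d(e^{i\theta})\frac{d\theta}{2\pi}$ with the singular mass $\mu_d(\partial{\bb D})$. The bookkeeping then runs as follows: $\|D_{\Gamma(\cdot)}d\|^2=\|d\|^2-\|\Gamma d\|_{H^2}^2$; subtracting and adding $\int h_d(e^{i\theta})\frac{d\theta}{2\pi}$ and using $h_d=k_d+\|\Gamma d\|^2$ a.e. (so that $\|\Gamma d\|_{H^2}^2\ge\int\|\Gamma d(e^{i\theta})\|^2\frac{d\theta}{2\pi}=\int h_d\frac{d\theta}{2\pi}-\int k_d\frac{d\theta}{2\pi}$, with equality iff $\Gamma d$ is "outer-like," i.e. its $H^2$ norm equals the boundary $L^2$ norm, which for $\Gamma d\in H^2$ is automatic) yields
\[
\|D_{\Gamma(\cdot)}d\|^2=\mu_d(\partial{\bb D})+\int_0^{2\pi}k_d(e^{i\theta})\frac{d\theta}{2\pi}+\Bigl(\text{a nonnegative defect term}\Bigr).
\]
Since all three summands are nonnegative, $\|D_{\Gamma(\cdot)}d\|=0$ for all $d$ forces $\mu_d\equiv0$ for all $d$ (equivalently $E(\cdot)$ absolutely continuous, by the equivalence of (a) and (b) recorded after \eqref{cfeq3.8h}) and $k_d(e^{i\theta})=0$ a.e.; conversely these two conditions force the limit of means $\int k_d(\rho\,\cdot)\to0$ (here one also needs that absolute continuity of $E$ upgrades the a.e. pointwise vanishing of $k_d$ to $L^1$-mean convergence, via the fact established above that $\int k_d(\rho e^{i\theta})\frac{d\theta}{2\pi}=\|d\|^2-\rho^2\|\Gamma_\rho d\|^2\to\|d\|^2-\int h_d\frac{d\theta}{2\pi}-\bigl(\text{defect}\bigr)$).

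\textbf{Main obstacle.}
The delicate point is the passage between the a.e.\ pointwise statement $k_d(e^{i\theta})=0$ and the $L^1$-mean statement $\int_0^{2\pi}k_d(\rho e^{i\theta})\,d\theta\to0$: pointwise nonnegativity plus Fatou only gives one inequality for free, and closing the gap is exactly where absolute continuity of $E(\cdot)$ (equivalently \eqref{cfeq3.8h}) enters, ensuring no mass escapes to a singular set. I expect to handle this by writing $\int k_d(\rho e^{i\theta})\frac{d\theta}{2\pi}=\|d\|^2-\rho^2\frac1{2\pi}\int\|\Gamma(\rho e^{i\theta})d\|^2\,d\theta$ exactly (no limit needed), so that the convergence of means is equivalent to $\|\Gamma d\|_{H^2}^2=\|d\|^2$, and then reconciling this with $h_d=k_d+\|\Gamma d\|^2$ a.e.\ and \eqref{cfeq3.8g}. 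Everything else—the two displayed integrations and the invocations of \eqref{cfeq3.8e}, \eqref{cfeq3.8k}, \eqref{cfeq3.8l}, Corollary~\ref{cfcor3.1}, and the equivalence after \eqref{cfeq3.8h}—is routine.
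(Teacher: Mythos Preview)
Your approach is correct and is essentially the paper's: integrate the boundary identity \eqref{cfeq3.8k}, use that $\|\Gamma d\|_{H^2({\cl D}')}^2=\frac1{2\pi}\int_0^{2\pi}\|\Gamma d(e^{i\theta})\|^2\,d\theta$ for $\Gamma d\in H^2({\cl D}')$, and combine with \eqref{cfeq3.8g}--\eqref{cfeq3.8h} to obtain $\|D_{\Gamma(\cdot)}d\|^2=\mu_d(\partial{\bb D})+\frac1{2\pi}\int_0^{2\pi} k_d(e^{i\theta})\,d\theta$, which vanishes iff both nonnegative summands do. Your detour through the $\rho$-means of $k_d$ and the ``nonnegative defect term'' is unnecessary---the defect is identically zero, as you yourself note---and once you use the displayed identity directly the ``main obstacle'' evaporates: there is no need to pass between pointwise and $L^1$-mean convergence of $k_d(\rho\,\cdot)$ at all.
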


\begin{proof}
Assume first that $\Gamma$ is an isometry. Then from
\eqref{cfeq3.8k} we infer (for any $d\in {\cl D}$)
\begin{equation}\label{cfeq3.8o}
 \frac1{2\pi} \int^{2\pi}_0 k_d(e^{i\theta})d\theta + \|d\|^2 = \frac1{2\pi} \int^{2pi}_0 h_d(e^{i\theta}) d\theta \le \|d\|^2.\tag{3.8o}
\end{equation}
Since $k_d(e^{i\theta})\ge 0$ a.e., \eqref{cfeq3.8o} implies \eqref{cfeq3.8n} and \eqref{cfeq3.8h}. Consequently, (due to the equivalence of the facts (a) and (b), above; see the discussion preceding Lemma \ref{cflem3.5}), we have that \eqref{cfeq3.8m} is also valid. Conversely, assume that both statements \eqref{cfeq3.8m} and \eqref{cfeq3.8n} are valid. Then using again \eqref{cfeq3.8k}  we have
\[
 \|\Gamma d\|^2_{H^2({\cl D}')} = \frac1{2\pi} \int^{2\pi}_0 \|(\Gamma d) (e^{i\theta})\|^2 d\theta = \frac1{2\pi} \int^{2\pi}_0 h_d(e^{i\theta})d\theta = \|d\|^2
\]
for all $d\in {\cl D}$. Consequently, $\Gamma$ is an isometry.
\end{proof}

For the investigation of the basic condition \eqref{cfeq3.6} we need to study first the operator-valued functions
\begin{align}\label{cfeq3.9}
K(z) &= 1-zA(z)\qquad (z\in {\bb D})\\
\intertext{and}
\label{cfeq3.9a}
J(z) &= K(z)^{-1}\qquad (z\in {\bb D}).\tag{3.9\text{a}}
\end{align}

\begin{lem}\label{cflem3.6}
{\rm a)}~~$K(z)$ $(z\in {\bb D})$ is an outer function; and\\
{\rm b)}~~$\|K(z)\|\le 1$ $(z\in {\bb D})$ if and only if $A(z)=0$ $(z\in {\bb D})$.
\end{lem}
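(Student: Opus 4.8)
I would dispose of (b) first, as it reduces to a scalar maximum‑modulus argument. The implication $A\equiv 0\Rightarrow\|K(z)\|\le 1$ is trivial, since then $K(z)\equiv I$. For the converse, assume $\|K(z)\|\le 1$ for all $z\in\bb D$ and fix a unit vector $d\in\cl D$. The scalar function $\varphi_d(z)=(K(z)d,d)=1-z(A(z)d,d)$ is analytic on $\bb D$, satisfies $|\varphi_d(z)|\le\|K(z)\|\le 1$, and has $\varphi_d(0)=1$; the maximum modulus principle therefore forces $\varphi_d\equiv 1$, hence $z(A(z)d,d)\equiv 0$, so $(A(z)d,d)=0$ for every $z\in\bb D$ and every $d\in\cl D$, and the polarization identity in the complex space $\cl D$ gives $A(z)=0$.

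For (a) the plan is to reformulate outerness operator‑theoretically. Since $\|K(z)\|\le 1+|z|<2$ on $\bb D$, the symbol $K$ is bounded and induces a bounded analytic Toeplitz (multiplication) operator $M_K$ on $H^2(\cl D)$; because moreover $K(0)=I$, the outerness of $K$ is equivalent to the density of $K\,H^2(\cl D)$ in $H^2(\cl D)$, that is, to $\ker M_K^{*}=\{0\}$. Writing $S=M_z$ for the unilateral shift on $H^2(\cl D)$ and $M_A$ for the analytic Toeplitz operator with symbol $A$ (so $\|M_A\|\le 1$ and $SM_A=M_AS$), one has $M_K=I-SM_A$, whence $M_K^{*}=I-M_A^{*}S^{*}$.

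It then remains to show that the only $g\in H^2(\cl D)$ with $g=M_A^{*}S^{*}g$ is $g=0$. For such a $g$ the chain $\|g\|=\|M_A^{*}S^{*}g\|\le\|S^{*}g\|\le\|g\|$ must consist of equalities, so $\|S^{*}g\|=\|g\|$; since $S$ is an isometry this says exactly that $g(0)=0$, i.e.\ $g=S(S^{*}g)$. Putting $h=S^{*}g$ the relation becomes $Sh=M_A^{*}h$, and applying $S^{*}$ together with the commutation $SM_A=M_AS$ yields $h=M_A^{*}S^{*}h$, which is the same fixed‑point equation. Iterating, $g\in S^{n}H^2(\cl D)$ for all $n$, hence $g\in\bigcap_{n\ge 0}S^{n}H^2(\cl D)=\{0\}$.

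The step I expect to be the main obstacle is precisely this last self‑improvement: the passage from ``$g$ solves $g=M_A^{*}S^{*}g$'' to ``$S^{*}g$ solves the same equation'' rests on the commutation of the two analytic Toeplitz operators $S$ and $M_A$, and it is this that lets the induction close against $\bigcap_n S^n H^2(\cl D)=\{0\}$. An alternative route, closer in spirit to the surrounding discussion, would be to observe that $\operatorname{Re}K(z)\ge(1-|z|)I>0$ and, by \eqref{cfeq3.8b}--\eqref{cfeq3.8c}, that $K(z)^{-1}=\tfrac12\big(F(z)+I\big)$ also has (uniformly) positive real part, so that both $K$ and $K^{-1}$ lie in the Smirnov class, from which outerness of $K$ follows; but the multiplication‑operator argument above is shorter and entirely self‑contained.
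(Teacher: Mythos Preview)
Your proof is correct. Part (b) is essentially the paper's argument: the paper applies the maximum principle to the (subharmonic) norm $\|K(z)d\|$ rather than to the scalar $(K(z)d,d)$, but the content is the same.

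For part (a) you take a genuinely different route. The paper argues \emph{directly on the range}: from the geometric--series identity
\[
I-(zA(z))^n=(I-zA(z))\bigl(I+zA(z)+\cdots+(zA(z))^{n-1}\bigr)
\]
one sees that $h-(zA(z))^{n}h\in M_K H^2(\cl D)$ for every $h\in H^2(\cl D)$ and every $n$; since $\|A(z)\|\le 1$, the functions $z^{n}A(z)^{n}h$ tend to $0$ weakly in $H^2(\cl D)$, so the range of $M_K$ is weakly (hence norm) dense. Your argument is the dual one, working on $\ker M_K^{*}$: a vector fixed by $M_A^{*}S^{*}$ is shown, via the commutation $S^{*}M_A^{*}=M_A^{*}S^{*}$, to lie in $\bigcap_n S^{n}H^2(\cl D)=\{0\}$. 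Both proofs rest on the same two ingredients---$\|zA(z)\|\le|z|$ and the shift structure of $H^2(\cl D)$---and are of comparable length; the paper's version is slightly more constructive (it nearly exhibits approximate preimages), while yours isolates cleanly the operator--theoretic mechanism and avoids the passage through weak convergence.
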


\begin{proof}
  For $n=1,2,3,\ldots$ we have
\[
 I - (zA(z))^n = (1-zA(z)) (1+zA(z) +\cdots+ (zA(z))^{n-1})\qquad (z\in {\bb D}).
\]
Consequently, for any function $h\in H^2({\cl D})$ the function
\[
 d(z) - z^nA(z)^n d(z)
\]
belongs to the range ${\cl R}$ of the operator of multiplication by
$(1-z A(z))$ on $ H^2({\cl D})$. But, since $\|A(z)^n h(z)\|\le
\|h(z)\|$ the functions
\[
 z^n A(z)^nh(z)
\]
converge to zero weakly in $ H^2({\cl D})$. It follows that ${\cl
R}$ is weakly dense in $H^2({\cl D})$ and hence (being a subspace of
$H^2({\cl D})$) also strongly dense in $H^2({\cl D})$. This proves
the first part of the lemma. For the second part, assume that
$\|K(z)\|\le 1$ for all $z\in\bb D$. Then

\[
 \|K(z)d\|\le \|d\| = \|K(0)d\|,\quad d\in{\cl D},
\]
and the maximum principle forces $K(z)d = K(0)d=d$ $(d\in {\cl D})$.
Thus $A(z)=0$ for $(z\in {\bb D})$.
\end{proof}

\begin{rk}\label{cfrk3.2}
Proposition \ref{cfprop3.1} in the case $A(z) = 0$ $(z\in {\bb D})$ takes the following trivial form:\ $\Gamma(\cdot)$ is an isometry if and only if $W(\cdot)$ is inner. Therefore, from now on we will assume that $A(z)\not\equiv 0$, or equivalently that
\begin{equation}\label{cfeq3.9b}
\underset{\scriptstyle |\zeta|=1}{\text{ess sup}}
\|K(\zeta)\|>1.\tag{3.9\text{b}}
\end{equation}
\end{rk}

\begin{rk}\label{cfrk3.3}
It is worth noticing that the basic condition \eqref{cfeq3.6} implies that if $F(z) \in {\cl L}({\cl D}, {\cl D}'')$ $(z\in {\bb D})$  is a bounded operator-valued analytic function, where ${\cl D}''$ is any Hilbert space, such that
\[
 F(e^{it})^* F(e^{it}) \le D^2_{W(e^{it})} = I_{\cl D} - W(e^{it})^* W(e^{it}) \text{ a.e.,}
\]
then $F(z) \equiv 0$ $(z\in {\bb D})$. Indeed, for the bounded analytic function
\[
G(z) = \begin{bmatrix}
        W(z)\\ F(z)
       \end{bmatrix}\qquad (z\in {\bb D}),
\]
we have
\[
 G(e^{i\theta})^* G(e^{i\theta}) \le I_{\cl D}\quad \text{a.e.}
\]
Therefore,
\begin{align}
G(z)^* G(z) &\le I_{\cl D}\qquad (z\in {\bb D}),\notag\\
F(z)^*F(z) &\le D^2_{W(z)} \qquad (z\in {\bb D}),\notag\\
\label{cfeq3.9c}
\|F(z) J(z)d\|^2 &\le \|D_{W(z)}J(z)d\|^2\qquad (z\in {\bb D}, d\in {\cl D}), \tag{3.9c}
\end{align}
and hence by virtue of \eqref{cfeq3.6}
\[
\lim_{\rho\nearrow 1} \int^{2\pi}_0 \|F(\rho e^{i\theta}) J(\rho e^{i\theta})d\|^2 d\theta \le \lim_{\rho \nearrow 1} \int^{2\pi}_0 \|D_{W(\rho e^{i\theta})} J(e^{i\theta})d\|^2 d\theta = 0.
\]
It follows that the ${\cl D}''$-valued function $F(z)J(z)$ (in $H^2({\cl D}'')$) is identically 0. Thus
\[
F(z) = F(z) \cdot J(z) K(z)  \equiv 0\qquad (z\in {\bb D}).
\]
Note that by virtue of (\cite[p. 201--203]{SzNF2}), the result we just established is equivalent to
\begin{equation}\label{cfeq3.9z}
\overline{D_{W(\cdot)}H^2({\cl D})}^{L^2({\cl D})} = \overline{D_{W(\cdot)}L^2({\cl D})}^{L^2({\cl D})},
\end{equation}
where both closures are in $L^2({\cl D})$. Thus \eqref{cfeq3.9z} is a necessary condition for $\Gamma(\cdot)$ to be an isometry.
\end{rk}

It is obvious that if $W(\cdot)$ is inner (that is, ${D_{W(e^{i\theta})}} = 0$ a.e.), then \eqref{cfeq3.9z} is satisfied. We will give now a case in which  the basic condition \eqref{cfeq3.6} in Proposition \ref{cfprop3.1} can be replaced with the condition
\begin{equation}\label{cfeq3.10}
D_{W(e^{it})} = 0 \quad \text{(a.e.);}
\end{equation}
that is, $W(\cdot)$ is an inner (analytic) function. To this end we
recall that the analytic operator-valued function $(z\in {\bb D})$
$(\cdot)$ is said to have a scalar multiple if there exist a
$\delta(\cdot)\in H^\infty$ and a bounded operator-valued analytic
function $G(z)$ $(z\in {\bb D})$ such that
\begin{equation}\label{cfeq3.11}
K(z) G(z) = G(z)K(z)  = \delta(z) I_{\cl D}\not\equiv 0\qquad (z\in {\bb D})
\end{equation}
(cf. \cite[Ch.~V, Sec. 6]{SzNF2}). By adapting the proof of Theorem 6.2 (loc.\ cit.) to our situation, we can assume due to Lemma \ref{cflem3.6} a)  that $\delta$ is an outer function; that is,
\begin{equation}\label{cfeq3.11a}
 \ovl{\delta H^2} = H^2.\tag{3.12a}
\end{equation}
Consequently, so is $G(\cdot)$; that is,
\begin{equation}\label{cfeq3.11b}
\ovl{G(\cdot)H^2({\cl D})} = H^2({\cl D}),\tag{3.12b}
\end{equation}
and hence (cf.\  \cite[Ch.~V, Proposition 2.4 (ii)]{SzNF2})
\begin{equation}\label{cfeq3.11c}
 G(e^{it}){\cl D} = {\cl D} \quad \text{(a.e.)}\tag{3.12c}
\end{equation}
Note that \eqref{cfeq3.11}, \eqref{cfeq3.11a}, \eqref{cfeq3.11b},
and \eqref{cfeq3.11c} imply that
\begin{equation}\label{cfeq3.11d}
J(e^{it}) = K(e^{it})^{-1}  \text{ exists  in } {\cl L}({\cl D}) \text{ a.e.,}
\tag{3.12d}
\end{equation}
and is in fact equal to $G(e^{it})/\delta(e^{it})$ a.e.\ Moreover,
\begin{equation}\label{cfeq3.11e}
\|J(\rho e^{it})d - J(e^{it})d\| \to 0 \text{ for } \rho\nearrow 1 \text{ for all $d\in {\cl D}$, a.e. } \tag{3.12e}
\end{equation}

We will now consider the slightly more general case in which \eqref{cfeq3.11d}, \eqref{cfeq3.11a} hold regardless of whether a scalar multiplier exists for $K(\cdot)$.

\begin{lem}\label{cflem3.7}
 Assume that \eqref{cfeq3.11d} and \eqref{cfeq3.11e} hold. Then (see the notation in Proposition \ref{cfprop3.2})
\begin{align}\label{cfeq3.12}
k_d(e^{i\theta}) &= \|D_{W(e^{i\theta})} J(e^{i\theta})d\|^2 \quad \text{a.e.\ and}\\
\label{cfeq3.13a}
h_d(e^{i\theta}) &= \|D_{A(e^{i\theta})} J(e^{i\theta})d\|^2\quad \text{a.e.}\tag{3.13a}
\end{align}
\end{lem}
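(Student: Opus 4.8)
The plan is to obtain both identities as pointwise (radial) limits of the corresponding identities that hold inside the disc, exploiting the strong radial convergence $J(\rho e^{i\theta})d\to J(e^{i\theta})d$ supplied by \eqref{cfeq3.11e}. First I would record the two algebraic identities valid for every $z\in{\bb D}\setminus\{0\}$: on the one hand, expanding the defining computation in the proof of Lemma \ref{cflem3.5}, one has $k_d(z)=\|D_{A(z)}d\|^2-\|\Gamma(z)d\|^2$, or equivalently $k_d(z)+\|\Gamma(z)d\|^2=\|D_{A(z)}d\|^2$; on the other hand, from \eqref{cfeq3.8c} one has $h_d(z)=\|D_{zA(z)}d(z)\|^2$, and a short manipulation with $d(z)=J(z)d=(I-zA(z))^{-1}d$ rewrites this as $h_d(z)=\|d(z)\|^2-\|zA(z)d(z)\|^2$. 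I would then express these in the form needed: writing $D_{W(z)}d(z)$ and $D_{A(z)}d(z)$ in place of their values at $d$, the point is that $\|D_{A(z)}d\|^2$ and $\|D_{A(z)}J(z)d\|^2$ must be reconciled via $d=K(z)d(z)=(I-zA(z))d(z)$.

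The key step is the passage $\rho\nearrow1$. By \eqref{cfeq3.11d}–\eqref{cfeq3.11e}, for a.e.\ $\theta$ the operator $J(\rho e^{i\theta})$ converges strongly on $d$ to the bounded operator $J(e^{i\theta})$; since $A(\rho e^{i\theta})$ has nontangential strong limit $A(e^{i\theta})$ a.e.\ (as a contractive analytic function, cf.\ \cite[Ch.~V]{SzNF2}), and $W(\rho e^{i\theta})$ likewise has radial limit $W(e^{i\theta})$ a.e., the defect operators $D_{W(\rho e^{i\theta})}$ and $D_{A(\rho e^{i\theta})}$ converge strongly a.e.\ as well. Combining, $D_{W(\rho e^{i\theta})}J(\rho e^{i\theta})d\to D_{W(e^{i\theta})}J(e^{i\theta})d$ and $D_{A(\rho e^{i\theta})}J(\rho e^{i\theta})d\to D_{A(e^{i\theta})}J(e^{i\theta})d$ a.e. At the same time $(1-|\rho e^{i\theta}|^2)/|\rho e^{i\theta}|^2\to0$ while $\|d(\rho e^{i\theta})\|=\|J(\rho e^{i\theta})d\|\to\|J(e^{i\theta})d\|$ stays bounded a.e., so the first term in the definition \eqref{cfeq3.8i} of $k_d$ drops out in the limit, leaving $k_d(e^{i\theta})=\|D_{W(e^{i\theta})}J(e^{i\theta})d\|^2$ a.e., which is \eqref{cfeq3.12}. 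Similarly, starting from the interior identity $h_d(z)=\|D_{zA(z)}d(z)\|^2$ and using $z\,A(z)d(z)=A(z)d(z)-$ (correction terms that vanish radially), together with $\|D_{zA(z)}d(z)\|^2=\|D_{A(z)}d(z)\|^2+(1-|z|^2)\|A(z)d(z)\|^2$, one gets in the limit $h_d(e^{i\theta})=\|D_{A(e^{i\theta})}J(e^{i\theta})d\|^2$ a.e., which is \eqref{cfeq3.13a}. Here I would invoke \eqref{cfeq3.8f} (existence of $h_d(e^{i\theta})$ a.e.) and \eqref{cfeq3.8j} (existence of $k_d(e^{i\theta})$ a.e.) from Lemma \ref{cflem3.5} so that the limits on the left-hand sides are known to exist, and only their values need to be identified.

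The main obstacle I anticipate is the a.e.\ interchange of limits: one must be sure that the strong radial convergence in \eqref{cfeq3.11e} is compatible, on a common conull set, with the a.e.\ nontangential strong convergence of $A(\rho e^{i\theta})$ and $W(\rho e^{i\theta})$, and that the defect operators — being continuous functions of the operators only in the strong sense on bounded sets — inherit the convergence when composed with the uniformly (in $\rho$, a.e.\ in $\theta$) bounded family $J(\rho e^{i\theta})$. The boundedness of $\{\|J(\rho e^{i\theta})d\|:\rho<1\}$ for a.e.\ $\theta$ is itself something to be extracted: it follows from \eqref{cfeq3.11e} (a convergent sequence is bounded) once that hypothesis is in force, but one should state this explicitly. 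The rest is the routine algebra of defect operators and the elementary limits $(1-\rho^2)\to0$, $1/\rho^2\to1$, which I would not belabor.
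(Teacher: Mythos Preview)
Your approach is essentially the same as the paper's: establish strong a.e.\ radial convergence of $D_{A(\rho e^{i\theta})}$ and $D_{W(\rho e^{i\theta})}$ (the paper makes explicit the reason the adjoints converge, namely that $\tilde A(z):=A(\bar z)^*$ is itself analytic), combine with the hypothesis \eqref{cfeq3.11e} to get $D_{W(\rho e^{i\theta})}J(\rho e^{i\theta})d\to D_{W(e^{i\theta})}J(e^{i\theta})d$ and likewise for $A$, and observe that the $(1-\rho^2)$ terms in \eqref{cfeq3.8i} and in $\|D_{zA(z)}d(z)\|^2=\|D_{A(z)}d(z)\|^2+(1-|z|^2)\|A(z)d(z)\|^2$ vanish in the limit. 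Your first paragraph's detour through the identity $k_d(z)+\|\Gamma(z)d\|^2=\|D_{A(z)}d(z)\|^2$ is unnecessary for the argument; the second paragraph already contains the complete proof.
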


\begin{proof}
 Since $A(z)$ and $\tilde A(z) := A(\bar z)^*$ are analytic, we have
\[
A(\rho e^{i\theta}) \to A(e^{i\theta}),\qquad A(\rho e^{i\theta})^* \to A(e^{i\theta}) \quad \text{strongly a.e.}
\]
and consequently
\begin{align*}
 D_{A(\rho e^{i\theta})} &= (I-A(\rho e^{i\theta})^* A(\rho e^{i\theta}))^{1/2}\to\\
&\to (I-A(e^{i\theta})^* A(e^{i\theta}))^{1/2} = D_{A(e^{i\theta})} \quad \text{strongly a.e.}
\end{align*}
A similar argument holds for the strong convergence
\[
 D_{W(\rho e^{i\theta})} \to D_{W(e^{i\theta})}\quad \text{ a.e.}
\]
Relations \eqref{cfeq3.12} and \eqref{cfeq3.13a} are direct
consequences of the above strong convergences and of
\eqref{cfeq3.11e}.
\end{proof}

We can now give the following corollary to Proposition \ref{cfprop3.2}.

\begin{prop}\label{cfprop3.3}
Assume that  \eqref{cfeq3.11d} and \eqref{cfeq3.11e} hold. Then $\Gamma(\cdot) \in {\cl L}({\cl D}, H^2({\cl D}'))$ is an isometry if and only if the following property holds:
\begin{itemize}
 \item[(e)] $W(\cdot)$ satisfies the condition \eqref{cfeq3.10} and $A(\cdot)$ satisfies the condition
\end{itemize}
\begin{equation}\label{cfeq3.13}
\frac1{2\pi} \int^{2\pi}_0 \|D_{A(e^{i\theta})} (I-e^{i\theta}A(e^{i\theta}))^{-1}d\|^2 d\theta = \|d\|^2 \qquad (d\in {\cl D}).
\end{equation}
\end{prop}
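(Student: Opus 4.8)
The plan is to derive Proposition \ref{cfprop3.3} by combining the general criterion of Proposition \ref{cfprop3.2} with the a.e.\ pointwise identities of Lemma \ref{cflem3.7}. Recall that Proposition \ref{cfprop3.2} says $\Gamma$ is an isometry iff (i) the Herglotz measure $E(\cdot)$ of $F(\cdot)=(I+zA(z))(I-zA(z))^{-1}$ is absolutely continuous and (ii) $k_d(e^{i\theta})=0$ a.e.\ for all $d\in{\cl D}$. Under the standing hypotheses \eqref{cfeq3.11d} and \eqref{cfeq3.11e}, Lemma \ref{cflem3.7} rewrites these two limiting functions as $k_d(e^{i\theta})=\|D_{W(e^{i\theta})}J(e^{i\theta})d\|^2$ and $h_d(e^{i\theta})=\|D_{A(e^{i\theta})}J(e^{i\theta})d\|^2$ a.e. So the plan is: translate condition (ii) into the statement "$D_{W(e^{i\theta})}J(e^{i\theta})d=0$ a.e.\ for all $d$'', and translate condition (i) into \eqref{cfeq3.13}; then show that the first of these is equivalent to the inner condition \eqref{cfeq3.10}.

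First I would handle the equivalence of condition (ii) with \eqref{cfeq3.10}. By Lemma \ref{cflem3.7}, $k_d(e^{i\theta})=0$ a.e.\ for every $d\in{\cl D}$ is the same as $D_{W(e^{i\theta})}J(e^{i\theta})d=0$ a.e.\ for every $d$. Since by \eqref{cfeq3.11d} $J(e^{i\theta})=K(e^{i\theta})^{-1}$ is invertible a.e., the vectors $J(e^{i\theta})d$, as $d$ ranges over ${\cl D}$, exhaust all of ${\cl D}$ for a.e.\ fixed $\theta$; hence (using separability of ${\cl D}$ to interchange "for all $d$'' and "a.e.\ $\theta$'') this forces $D_{W(e^{i\theta})}=0$ a.e., which is exactly \eqref{cfeq3.10}. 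Conversely, if $W(\cdot)$ is inner then trivially $D_{W(e^{i\theta})}J(e^{i\theta})d=0$ a.e., so (ii) holds. This also shows that under \eqref{cfeq3.10}, the basic condition \eqref{cfeq3.6} of Proposition \ref{cfprop3.1} is automatically fulfilled, as noted in the text just before Remark \ref{cfrk3.2}.

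Next I would treat the absolute continuity condition (i). Recall from the discussion preceding Lemma \ref{cflem3.5} that (i) holds iff $\|d\|^2=\int_0^{2\pi}h_d(e^{i\theta})\,\frac{d\theta}{2\pi}$ for all $d$ (that is, \eqref{cfeq3.8h}); this is simply the statement that the singular part of each $(E(\cdot)d,d)$ vanishes, by \eqref{cfeq3.8g}. Now invoke the second identity of Lemma \ref{cflem3.7}, $h_d(e^{i\theta})=\|D_{A(e^{i\theta})}J(e^{i\theta})d\|^2$, to rewrite \eqref{cfeq3.8h} as
\[
\frac1{2\pi}\int_0^{2\pi}\|D_{A(e^{i\theta})}(I-e^{i\theta}A(e^{i\theta}))^{-1}d\|^2\,d\theta=\|d\|^2\qquad(d\in{\cl D}),
\]
which is precisely \eqref{cfeq3.13}. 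Assembling the two parts: $\Gamma$ is an isometry $\iff$ (i) and (ii) $\iff$ \eqref{cfeq3.13} holds and $W(\cdot)$ is inner $\iff$ property (e). This completes the argument.

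The one point demanding a little care — the "main obstacle," such as it is — is the quantifier interchange in the first step: passing from "for each $d$, $D_{W(e^{i\theta})}J(e^{i\theta})d=0$ for a.e.\ $\theta$'' to "for a.e.\ $\theta$, $D_{W(e^{i\theta})}=0$.'' This uses separability of ${\cl D}$ (fixing a countable dense set, one gets a common full-measure set of $\theta$) together with the a.e.\ invertibility of $J(e^{i\theta})=K(e^{i\theta})^{-1}$ from \eqref{cfeq3.11d}, which guarantees that $\{J(e^{i\theta})d:d\in{\cl D}\}$ is dense in (indeed equal to) ${\cl D}$ for a.e.\ $\theta$; the strong a.e.\ convergence built into \eqref{cfeq3.11e} and the strong measurability of $D_{W(e^{i\theta})}$ (established in the proof of Lemma \ref{cflem3.7}) make the continuity/measurability bookkeeping routine. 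Everything else is a direct substitution into the criteria already proved.
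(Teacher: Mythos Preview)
Your proposal is correct and follows essentially the same route as the paper: invoke Proposition \ref{cfprop3.2}, use Lemma \ref{cflem3.7} to identify $k_d(e^{i\theta})$ and $h_d(e^{i\theta})$ with the boundary expressions, translate the absolute continuity condition into \eqref{cfeq3.13} via \eqref{cfeq3.8h}, and handle the quantifier interchange for $k_d\equiv 0$ by passing to a countable dense subset of ${\cl D}$ and using the a.e.\ invertibility of $J(e^{i\theta})$. The paper's proof is organized in exactly this way, with the separability argument made explicit through the null sets $Ex(d)$ and $Ex_0$.
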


\begin{proof}
In the present situation (due to Lemma \ref{cflem3.7}) we have that
$\Gamma(\cdot)$ is isometric if and only if  relation
\eqref{cfeq3.13} holds, and
\begin{equation}\label{cfeq3.14}
D_{W(e^{i\theta})} J(e^{i\theta})d = 0\quad \text{a.e.}\quad (d\in {\cl D})
\end{equation}
hold. Let ${\cl D}_0\subset {\cl D}$ be a countable dense subset of
${\cl D}$ and denote by $Ex(d)$ the null
 set on which  \eqref{cfeq3.14} fails. If $Ex_0$ denotes the set of the $e^{i\theta}$ is for
which at least one of the relations \eqref{cfeq3.11d},
\eqref{cfeq3.11e} is not valid, then
\[
 Ex = Ex_0 \cup \left(\bigcup_{d\in {\cl D}_0} Ex(d)\right)
\]
is also a null set and
\begin{equation}\label{cfeq3.14a}
D_{W(e^{i\theta})} J(e^{i\theta}){\cl D}_0 = \{0\}\qquad (e^{i\theta}\notin Ex).\tag{3.15a}
\end{equation}
But for $e^{i\theta}\notin Ex$, the operator $D_{W(e^{i\theta})} J(e^{i\theta})$ is bounded. Therefore \eqref{cfeq3.14a} implies
\[
 D_{W(e^{i\theta})} = D_{W(e^{i\theta})} J(e^{i\theta})\cdot  E(e^{i\theta}) = 0\cdot E(e^{i\theta}) = 0,
\]
i.e.\ \eqref{cfeq3.10}. This concludes the proof since \eqref{cfeq3.10} obviously implies \eqref{cfeq3.14}.
\end{proof}

\begin{cor}\label{cfcor3.4}
 Let $a,b\in H^\infty$ satisfy the condition
\[
 \left\|\begin{bmatrix}
         a(z)\\ b(z)
        \end{bmatrix}\right\| \le 1\qquad (z\in {\bb D})
\]
and define
\[
w(z) = \begin{bmatrix}
        a(z)\\ b(z)
       \end{bmatrix} \quad \text{and}\quad \gamma(z) = \frac{b(z)}{1-za(z)}\qquad (z\in {\bb D}).
\]
Then
\begin{equation}\label{cfeq3.16}
 \|\gamma(\cdot)\|^2 = \frac1{2\pi} \int^{2\pi}_0 |\gamma(e^{i\theta})|^2 d\theta \le 1
\end{equation}
and equality holds in \eqref{cfeq3.16} if and only if $w$ and $a$
 satisfy the following conditions:
\begin{gather}\label{cfeq3.16a}
w(e^{i\theta})^* w(e^{it}) = 1\quad\text{a.e.\quad and}\tag{3.17a}\\
\label{cfeq3.16b}
\frac1{2\pi} \int^{2\pi}_0 \frac{1-|a(e^{i\theta})|^2}{|1-e^{i\theta}a(e^{i\theta})|^2} d\theta =1. \tag{3.17b}
\end{gather}
\end{cor}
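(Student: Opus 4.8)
The plan is to recognize Corollary \ref{cfcor3.4} as the scalar specialization of Proposition \ref{cfprop3.3}, so the main task is to verify that the hypotheses \eqref{cfeq3.11d} and \eqref{cfeq3.11e} hold automatically in this setting. First I would set $\mathcal D = \mathcal D' = \mathbb C$, so that $A(z) = a(z)$, $B(z) = b(z)$, $W(z) = w(z)$, and $\Gamma(z) = \gamma(z)$, and the contraction inequality $\|w(z)\| \le 1$ becomes $|a(z)|^2 + |b(z)|^2 \le 1$ on $\mathbb D$. The inequality \eqref{cfeq3.16} is then just Corollary \ref{cfcor3.1} (or Lemma \ref{cflem3.1}) in the scalar case: $\|\gamma(\cdot)\|_{H^2}^2 \le 1$. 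So the content is the equality case.

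Next I would dispose of the degenerate case $a \equiv 0$: then $\gamma = b$, and equality in \eqref{cfeq3.16} says $\|b\|_{H^2} = 1$, i.e. $|b(e^{i\theta})| = 1$ a.e. (since $|b|\le 1$), which is exactly \eqref{cfeq3.16a}, while \eqref{cfeq3.16b} reads $\int (1)\,d\theta/2\pi = 1$, automatically true; so the corollary holds trivially (this parallels Remark \ref{cfrk3.2}). Assuming $a \not\equiv 0$, I need to check the hypotheses of Proposition \ref{cfprop3.3}. The function $K(z) = 1 - za(z)$ is a bounded scalar analytic function; since $a\in H^\infty$ with $|a|\le 1$, we have $|K(e^{i\theta})| = |1 - e^{i\theta}a(e^{i\theta})|$, and the point is that this is nonzero a.e. This follows because $K$ is a nonzero bounded analytic function (it equals $1$ at $z=0$), hence its boundary values are nonzero a.e. — the classical fact that a nonzero $H^\infty$ function cannot vanish on a set of positive measure. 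Therefore $J(e^{i\theta}) = K(e^{i\theta})^{-1} = 1/(1 - e^{i\theta}a(e^{i\theta}))$ exists a.e., giving \eqref{cfeq3.11d}. For \eqref{cfeq3.11e}, the convergence $J(\rho e^{i\theta}) \to J(e^{i\theta})$ a.e.\ follows since $K(\rho e^{i\theta}) \to K(e^{i\theta})$ a.e.\ (radial limits of the $H^\infty$ function $K$) and the limit is nonzero a.e., so the reciprocals converge a.e.\ as well.

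With the hypotheses verified, Proposition \ref{cfprop3.3} applies and says that $\gamma(\cdot)$ is an isometry from $\mathbb C$ to $H^2(\mathbb C)$ — which, since $\mathcal D = \mathbb C$ is one-dimensional, is precisely the statement that $\|\gamma(\cdot)\|_{H^2}^2 = 1$, i.e.\ equality holds in \eqref{cfeq3.16} — if and only if property (e) holds. Property (e) is: $W(\cdot) = w(\cdot)$ satisfies \eqref{cfeq3.10}, namely $D_{w(e^{i\theta})} = 0$ a.e., which in the scalar case means $|w(e^{i\theta})|^2 = w(e^{i\theta})^*w(e^{i\theta}) = 1$ a.e., i.e.\ \eqref{cfeq3.16a}; together with \eqref{cfeq3.13}, which in the scalar case ($D_{a(e^{i\theta})}^2 = 1 - |a(e^{i\theta})|^2$ and $(I - e^{i\theta}a(e^{i\theta}))^{-1} = 1/(1 - e^{i\theta}a(e^{i\theta}))$) reads exactly as \eqref{cfeq3.16b}. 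So property (e) unwinds verbatim into the pair \eqref{cfeq3.16a}–\eqref{cfeq3.16b}, completing the proof. The only mild obstacle I anticipate is making the two-line argument for \eqref{cfeq3.11d}–\eqref{cfeq3.11e} airtight — i.e., citing correctly that a nonvanishing-at-origin $H^\infty$ function has nonzero boundary values a.e.\ and that radial limits of $1/K$ are the reciprocals of the radial limits of $K$ where the latter are nonzero; everything else is bookkeeping in the specialization from operator coefficients to scalars.
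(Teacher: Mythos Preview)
Your argument is correct. The only difference from the paper is that the paper's one-line proof cites Proposition \ref{cfprop3.2} rather than Proposition \ref{cfprop3.3}; since Proposition \ref{cfprop3.3} is itself derived from Proposition \ref{cfprop3.2} via Lemma \ref{cflem3.7} under exactly the hypotheses \eqref{cfeq3.11d}--\eqref{cfeq3.11e} that you verify in the scalar case, your route is essentially the same (and arguably more transparent, as the conditions in property (e) specialize verbatim to \eqref{cfeq3.16a}--\eqref{cfeq3.16b}).
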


\begin{proof}
The result follows readily from Proposition \ref{cfprop3.2}, by taking ${\cl D} = {\bb C}$, ${\cl D}' = {\bb C}$.
\end{proof}

\begin{rk}\label{cfrk3.5}
In Proposition \ref{cfprop3.2}, neither one of the equalities \eqref{cfeq3.16} or \eqref{cfeq3.16b} implies the other, as is shown by the following two examples.
\end{rk}

\begin{exm}\label{cfexm3.1}
Define
\[
u(z) = \frac{3/4}{2\pi} \int^{\pi}_0 \frac{e^{i\theta}+z}{e^{i\theta}-z} d\theta + \frac{1/4}{2\pi} \int^{2\pi}_\pi \frac{e^{i\theta}+z}{e^{i\theta}-z} d\theta + \frac12\frac{1+z}{1-z} \quad (z\in {\bb D}).
\]
Then
\[
v(z) = \text{Re } u(z) \ge 0\qquad (z\in {\bb D})
\]
and
\[
v(e^{i\theta}) = \lim_{\rho\nearrow 1} v(\rho e^{i\theta}) =
\begin{cases}
3/4&\text{for $0<e^{i\theta} < \pi$}\\
1/4&\text{for $\pi < e^{i\theta} < 2\pi$}
\end{cases}~.
\]
Set
\[
a(z) = \frac1z \frac{u(z)-1}{u(z)+1}\quad (z\in {\bb D}\backslash\{0\})\quad \text{and}\quad a(0) = \frac12 u'(0).
\]
Then $a(z) \in H^\infty$ and
\begin{equation}\label{cfeq3.17}
1-|a(e^{i\theta})|^2 = \frac{4v(e^{i\theta})}{|u(e^{i\theta})+1|^2} = v(e^{i\theta}) |1-e^{i\theta} a(e^{i\theta})|^2 \text{ a.e.}
\end{equation}
In particular, this relation shows that $\|a(\cdot)\|_{H^\infty} \le 1$; moreover,
\[
1 - |a(e^{i\theta})|^2 \ge \frac14 |1-e^{i\theta}a(e^{i\theta})|^2 \quad\text{a.e.}
\]
Since $1-za(z) (z\in {\bb D})$ is an outer function, there exists  an outer function $b\in H^\infty$ such that
\[
|b(e^{i\theta})|^2 = 1 - |a(e^{i\theta})|^2 \quad \text{a.e.}
\]
Therefore
\[
\|w(z)\|\le 1\quad (z\in {\bb D}), \text{ where } w(z) =
\begin{bmatrix}
 a(z)\\ b(z)
\end{bmatrix}\quad (z\in {\bb D}),
\]
and \eqref{cfeq3.16a} is satisfied. However, due to \eqref{cfeq3.17}
\[
\frac1{2\pi} \int^{2\pi}_0 \frac{1-|a(e^{i\theta})|^2}{|1-e^{i\theta}a(e^{i\theta})|^2} d\theta = \frac1{2\pi} \int^{2\pi}_0 v(e^{i\theta})d\theta = \frac12,\quad \text{and}
\]
hence \eqref{cfeq3.16b} is not valid.
\end{exm}

\begin{exm}\label{cfexm3.2}
Define
\[
w(z) =\begin{bmatrix}
       1/2\\ 1/2
      \end{bmatrix} \quad (z\in {\bb D}).
\]
Then $\|w(z)\|\le 1/\sqrt 2 < 1$ $(z\in \ovl{\bb D})$ and \eqref{cfeq3.16} is not satisfied, but
\[
 \frac1{2\pi} \int^{2\pi}_0 \frac{1-(1/2)^2}{|1-e^{i\theta}/2|^2} d\theta = 1,
\]
i.e.\ \eqref{cfeq3.16b} is valid.
\end{exm}

\section{Isometric intertwining lifting}\label{cfsec4}

\indent

We return now to the commutant lifting theorem setting presented in Section \ref{cfsec3}. We recall that if we denote
\begin{equation}\label{cfeq4.1}
 A(z) = \Pi W(z),\quad d(z) = (I-zA(z))^{-1}d\qquad (z\in {\bb D}),
\end{equation}
where $d\in {\cl D} = {\cl D}_X$, then the contractive intertwining liftings of $X$ are given by the formula
\begin{equation}\label{cfeq4.1a}
 B = \begin{bmatrix}
      X\\ \Gamma(\cdot)D_X
     \end{bmatrix} \in {\cl L}({\cl H}, {\cl H}'\oplus H^2({\cl D})),\tag{4.1a}
\end{equation}
where
\begin{equation}\label{cfeq4.1b}
\Gamma(z)d = \Pi'W(z)d(z)\qquad (z\in {\bb D})\tag{4.1b}
\end{equation}
and
\begin{equation}\label{cfeq4.1c}
 W(z) = \bar\omega d(z) + R(z) (I-\bar\omega^*\bar\omega)d(z)\quad (z\in {\bb D}).\tag{4.1c}
\end{equation}
In \eqref{cfeq4.1c}, $\bar\omega$ is the partial isometry $\in {\cl
L}({\cl D}, {\cl D} \oplus {\cl D}_T)$ defined in
Section
\ref{cfsec2} and
\[
R(z) \in{\cl L}(\ker\bar\omega,\ker\bar\omega^*)\qquad (z\in {\bb
D})
\]
is an arbitrary analytic operator-valued function such that
\[
 \|R(z)\|\le 1\qquad (z\in {\bb D}).
\]
We also recall that $B$ is isometric if and only if $\Gamma(\cdot)$ is isometric. According to Proposition \ref{cfprop4.1} this can happen if and only if the (d) set of properties holds for $W(\cdot)$. By  noticing that
\begin{equation}\label{cfeq4.1d}
\begin{split}
\|D_{W(z)}d(z)\|^2 &= \|d(z)\|^2 - \|\bar\omega d(z)\|^2\notag\\
-\|R(z)(1-\bar\omega^*\bar\omega)d(z)\|^2 &= \|(1-\bar\omega^*\bar\omega)d(z)\|^2\notag\\
-\|R(z)(1-\bar\omega^*\bar\omega)d(z)\|^2 &= \|D_{R(z)}(1-\bar\omega^*\bar\omega) d(z)\|^2\qquad (z\in {\bb D}),
\end{split}\tag{4.1d}
\end{equation}
we have this result as a direct consequence of Proposition \ref{cfprop3.1}.

\begin{prop}\label{cfprop4.1}
 The contractive intertwining lifting $B$ associated to $R(\cdot)$ is isometric if and only if the following two properties hold for all $d\in {\cl D}$:
\begin{align}\label{cfeq4.2}
&\lim_{\varphi\nearrow 1} \frac1{2\pi} \int^{2\pi}_0 \|D_{R(\rho e^{i\theta})}(1-\bar\omega^* \bar\omega) d(e^{i\theta})\|^2 d\theta=0; \text{ and}\\
\label{cfeq4.2a}
&\text{the Taylor coefficients $d_n (n=0,1,\ldots)$ of $d(\cdot)$ satisfy the condition}\tag{4.2a}\\
&\|d_n\|\to 0 \text{ for } n\to \infty.\notag
\end{align}
\end{prop}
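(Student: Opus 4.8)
The plan is to obtain this as a direct specialization of Proposition~\ref{cfprop3.1} to the contractive analytic function $W(\cdot)$ of \eqref{cfeq4.1c}, the only new ingredient being the pointwise identity \eqref{cfeq4.1d}.

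First I would recall (as noted just before the statement, and ultimately from Corollary~\ref{cfcor2.2} together with the form \eqref{cfeq4.1a} of $B$) that $B$ is isometric precisely when $\Gamma(\cdot)$ is isometric; indeed, since the range of $D_X$ is dense in $\cl D=\cl D_X$, the identity $\|Bh\|^2=\|Xh\|^2+\|\Gamma(\cdot)D_Xh\|^2_{H^2}$ shows that $B$ is isometric if and only if $\|\Gamma(\cdot)e\|_{H^2}=\|e\|$ for all $e$ in a dense subset of $\cl D$, hence, by continuity of $\Gamma(\cdot)$, for all $e\in\cl D$. So it suffices to characterize when $\Gamma(\cdot)$ is an isometry. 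With $A(z)=\Pi W(z)$ and $d(z)=(I-zA(z))^{-1}d$ as in \eqref{cfeq4.1}, Proposition~\ref{cfprop3.1} (the equivalence of (a) and (d)) says that $\Gamma(\cdot)$ is an isometry if and only if
\[
\lim_{\rho\nearrow1}\int_0^{2\pi}\|D_{W(\rho e^{i\theta})}\,d(\rho e^{i\theta})\|^2\,d\theta=0
\]
and, writing $(I-zA(z))^{-1}=I+zD_1+\cdots+z^nD_n+\cdots$, one has $D_n\to0$ strongly. Since $d(z)=(I-zA(z))^{-1}d=\sum_{n\ge0}z^n D_n d$ has Taylor coefficients $d_n=D_n d$ (in particular $d_0=d$), the strong convergence $D_n\to0$ is exactly the requirement that $\|d_n\|\to0$ for every $d\in\cl D$, which is condition \eqref{cfeq4.2a}.

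It remains to recast the integral condition as \eqref{cfeq4.2}, and this is where the one genuine computation, \eqref{cfeq4.1d}, enters: for each fixed $z\in\bb D$ one has $\|D_{W(z)}d(z)\|^2=\|D_{R(z)}(I-\bar\omega^*\bar\omega)d(z)\|^2$. I would justify \eqref{cfeq4.1d} by combining three standard facts about \eqref{cfeq4.1c}: that $\bar\omega$ is a partial isometry, so $I-\bar\omega^*\bar\omega$ is the orthogonal projection onto $\ker\bar\omega$ and $\|d(z)\|^2-\|\bar\omega d(z)\|^2=\|(I-\bar\omega^*\bar\omega)d(z)\|^2$; that $\text{ran}\,\bar\omega\perp\ker\bar\omega^*$ while $R(z)$ takes values in $\ker\bar\omega^*$, which gives the Pythagorean identity $\|W(z)d(z)\|^2=\|\bar\omega d(z)\|^2+\|R(z)(I-\bar\omega^*\bar\omega)d(z)\|^2$; and the defect identity $\|e\|^2-\|R(z)e\|^2=\|D_{R(z)}e\|^2$ applied with $e=(I-\bar\omega^*\bar\omega)d(z)$. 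Substituting $z=\rho e^{i\theta}$ into \eqref{cfeq4.1d} and integrating in $\theta$ then turns the integral condition coming from Proposition~\ref{cfprop3.1} into \eqref{cfeq4.2}, completing the argument.

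The one delicate point — and where I expect most of the care to be needed — is that \eqref{cfeq4.2} is written with the boundary value $d(e^{i\theta})$ rather than $d(\rho e^{i\theta})$: since \eqref{cfeq4.2a} does not force $d(\cdot)$ to lie in $H^2(\cl D)$, one cannot simply move the limit inside the norm. I would resolve this in the spirit of Lemma~\ref{cflem3.7}, exploiting the a.e.\ strong radial convergences $A(\rho e^{i\theta})\to A(e^{i\theta})$ and $R(\rho e^{i\theta})\to R(e^{i\theta})$, hence $D_{R(\rho e^{i\theta})}\to D_{R(e^{i\theta})}$, together with the radial behaviour of $J(\cdot)=(I-(\cdot)A(\cdot))^{-1}$, to identify the two families of integrals; alternatively, one simply records the condition with $d(\rho e^{i\theta})$ in place of $d(e^{i\theta})$, the equivalent form that drops straight out of \eqref{cfeq4.1d} and Proposition~\ref{cfprop3.1}.
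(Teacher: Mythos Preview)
Your approach is exactly the paper's: reduce to $\Gamma(\cdot)$ being isometric, invoke the equivalence (a)$\Leftrightarrow$(d) of Proposition~\ref{cfprop3.1}, and use the pointwise identity \eqref{cfeq4.1d} to translate \eqref{cfeq3.6} into \eqref{cfeq4.2}. Your observation about $d(e^{i\theta})$ versus $d(\rho e^{i\theta})$ in \eqref{cfeq4.2} is well taken --- the paper's derivation via \eqref{cfeq4.1d} naturally yields $d(\rho e^{i\theta})$, so this is almost certainly a typographical slip in the statement (as is the $\varphi$ in the limit), and your suggested resolution of simply recording the condition with $d(\rho e^{i\theta})$ is the right one.
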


Due to \eqref{cfeq4.1d} (as well as to the equivalence of the facts (a), (b) observed before Lemma \ref{cflem3.5}) we can also reformulate Proposition \ref{cfprop3.2} as follows

\begin{prop}\label{cfprop4.2}
$\Gamma(\cdot)\in {\cl L}({\cl D}, H^2({\cl D}'))$ is an isometry if and only if the following two conditions are satisfied:
\begin{equation}\label{cfeq4.3}
 \int^{2\pi}_0 h_d(e^{i\theta}) \frac{d\theta}{2\pi} = \|d\|^2\qquad (d\in {\cl D}),
\end{equation}
where (see also \eqref{cfeq4.1})
\begin{equation}\label{cfeq4.3a}
 h_d(e^{i\theta}) = \lim_{\rho\to 1} \|D_{W(z)} d(z)\|\big|_{z=\rho e^{i\theta}} \text{ \rm a.e.}
\tag{4.3a}
\end{equation}
for each $d\in {\cl D}$; and
\begin{equation}\label{cfeq4.4}
 \lim_{\rho\nearrow 1} [(1-|z|^2) \|d(z)\|^2 + \|D_{R(z)} (1-\bar\omega^* \bar\omega) d(z)\|\big|_{z=\rho e^{i\theta}} = 0 \text{ \rm a.e.}
\end{equation}
for each $d\in {\cl D}$.
\end{prop}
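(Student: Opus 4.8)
The plan is to derive Proposition \ref{cfprop4.2} by specializing the general analytic result Proposition \ref{cfprop3.2} to the commutant lifting setting, using the identities \eqref{cfeq4.1}--\eqref{cfeq4.1d} that connect the abstract data $W(\cdot)$, $A(\cdot)$, $\Gamma(\cdot)$ to the residual objects $\bar\omega$, $R(\cdot)$, $d(\cdot)$. Recall from Proposition \ref{cfprop3.2} that $\Gamma(\cdot)$ is an isometry if and only if the Herglotz measure $E(\cdot)$ of the function $F(z) = (I+zA(z))(I-zA(z))^{-1}$ is absolutely continuous \emph{and} $k_d(e^{i\theta}) = 0$ a.e.\ for all $d\in{\cl D}$, where $k_d(z) = \frac{1-|z|^2}{|z|^2}\|d(z)\|^2 + \frac1{|z|^2}\|D_{W(z)}d(z)\|^2$. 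The first task is thus to re-express each of these two conditions in the promised form \eqref{cfeq4.3} and \eqref{cfeq4.4}.

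First I would handle the absolute continuity condition. By the discussion preceding Lemma \ref{cflem3.5}, absolute continuity of $E(\cdot)$ is equivalent to $\|d\|^2 = \int_0^{2\pi} h_d(e^{i\theta})\,\frac{d\theta}{2\pi}$ for all $d\in{\cl D}$, where $h_d$ is the nonnegative harmonic function $h_d(z) = (\operatorname{Re}F(z)d,d) = \|D_{zA(z)}d(z)\|^2$ from \eqref{cfeq3.8c}. It remains only to identify the boundary values: from \eqref{cfeq3.8c} one has $h_d(z) = \|D_{A(z)}d\|^2 - \|D_{W(z)}d(z)\|^2 + \ldots$; more directly, combining the computation in the proof of Lemma \ref{cflem3.5} (the displayed chain giving $\|D_{W(z)}d(z)\|^2 + \|\Gamma(z)d\|^2 = \|D_{A(z)}d\|^2$) with \eqref{cfeq4.1d}, which says $\|D_{W(z)}d(z)\|^2 = \|D_{R(z)}(1-\bar\omega^*\bar\omega)d(z)\|^2$, we get the clean pointwise relation between $h_d$ and $\|D_{W(z)}d(z)\|$ needed for \eqref{cfeq4.3a}. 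So \eqref{cfeq4.3} is exactly the absolute continuity condition \eqref{cfeq3.8h}, with the stated formula \eqref{cfeq4.3a} for $h_d(e^{i\theta})$ as a radial limit.

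Next I would handle the condition $k_d \equiv 0$ a.e. Since $|z|^2 \to 1$ radially, $k_d(e^{i\theta}) = 0$ a.e.\ is equivalent to $(1-|z|^2)\|d(z)\|^2 + \|D_{W(z)}d(z)\|^2 \to 0$ radially a.e.; and again invoking \eqref{cfeq4.1d} to rewrite $\|D_{W(z)}d(z)\|^2 = \|D_{R(z)}(1-\bar\omega^*\bar\omega)d(z)\|^2$, this is precisely \eqref{cfeq4.4}. Thus the two conditions of Proposition \ref{cfprop3.2}, transcribed through the dictionary \eqref{cfeq4.1}--\eqref{cfeq4.1d}, become exactly \eqref{cfeq4.3}--\eqref{cfeq4.3a} and \eqref{cfeq4.4}, which proves the proposition.

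The only genuinely delicate point — and the one I would be careful about — is the justification that the relevant radial limits in \eqref{cfeq4.3a} and \eqref{cfeq4.4} exist a.e.\ and are the correct objects: $h_d$ is harmonic and nonnegative, so by Fatou its radial limit exists a.e.\ and, by the discussion around \eqref{cfeq3.8f}, equals $2\pi$ times the density of the absolutely continuous part of $(E(\cdot)d,d)$, which is why \eqref{cfeq4.3} captures absolute continuity; meanwhile $\|D_{W(z)}d(z)\|^2$ and $(1-|z|^2)\|d(z)\|^2$ are handled through Lemma \ref{cflem3.5}, which already established that $k_d$ has radial limits a.e.\ and that \eqref{cfeq3.8k} holds. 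Everything else is the substitution $A(z) = \Pi W(z)$, $W(z) = \bar\omega d(z) + R(z)(1-\bar\omega^*\bar\omega)d(z)$ and the algebraic identity \eqref{cfeq4.1d}, which the excerpt has already recorded. Hence the proof is essentially a bookkeeping exercise reconciling the notation of Section \ref{cfsec3} with that of Section \ref{cfsec4}.
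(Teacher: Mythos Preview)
Your proposal is correct and follows essentially the same approach as the paper: the paper's entire ``proof'' is the single sentence preceding the statement, namely that Proposition~\ref{cfprop4.2} is obtained from Proposition~\ref{cfprop3.2} by invoking \eqref{cfeq4.1d} together with the equivalence of facts (a) and (b) recorded before Lemma~\ref{cflem3.5}. You have unpacked exactly this, and your additional care about the existence of the radial limits (via Fatou for $h_d$ and via Lemma~\ref{cflem3.5} for $k_d$) is a welcome elaboration of what the paper leaves implicit.
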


The problem with these two propositions is that one cannot always
apply them.  None of the conditions \eqref{cfeq4.2},
\eqref{cfeq4.2a}, \eqref{cfeq4.3} or \eqref{cfeq4.4} is easy to
analyse or check. To illustrate this difficulty we will now give two
results.

\begin{prop}\label{cfprop4.3}
 With the notation of Section \ref{cfsec2}, assume $\|X\|<1$ and
 that there is an isometric intertwining lifting $Y_1$ of $X$.
 Then $T$ is a unilateral shift.
\end{prop}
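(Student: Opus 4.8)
The plan is to extract information about $T$ from the existence of an isometric lifting by combining Proposition \ref{cfprop4.1} with the hypothesis $\|X\|<1$. Recall the setup of Section \ref{cfsec2}: $T'$ is c.n.u., $T$ is an isometry on $\cl H$, and $X$ intertwines them with $\|X\|<1$. Since $\|X\|<1$, the defect operator $D_X$ is bounded below, so $D_X$ is invertible and $\cl D_X = \cl H$; more importantly, $(D_XT\cl H)^- = (D_X\cl H)^- = \cl D_X$ since $T$ is an isometry and $D_X$ has dense range (in fact full range). Consequently the partial isometry $\bar\omega$ of \eqref{cfeq2.4b} is actually the genuine isometry $\omega$ on all of $\cl D_X$, and $I - \bar\omega^*\bar\omega = 0$. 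Thus $\ker\bar\omega = \{0\}$, the free Schur contraction $R(\cdot)$ is the trivial map on the zero space, and \eqref{cfeq4.1c} reduces to $W(z) = \bar\omega\, d(z) = \omega\, d(z)$, an \emph{isometry} pointwise in $z$, with $D_{W(z)} = 0$ for every $z \in \bb D$.

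With $W(\cdot)$ pointwise isometric, condition \eqref{cfeq3.6} is automatic, and by Proposition \ref{cfprop4.1} (equivalently the (d) set of properties in Proposition \ref{cfprop3.1}) the lifting $Y_1$ being isometric forces condition \eqref{cfeq4.2a}: the Taylor coefficients $d_n$ of $d(z) = (I - zA(z))^{-1}d$ satisfy $\|d_n\| \to 0$ for every $d \in \cl D_X$. Now I need to identify $A(z) = \Pi W(z)$ concretely. Since $W(z) = \omega\, d(z)$ and $\omega_0 \colon D_XTh \mapsto D_{T'}Xh \oplus D_Xh$, the component $\Pi W(z)$ — the projection onto the $\cl D_X$-summand — should encode, via $D_X$ and the intertwining relation, a copy of $T$ itself. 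Concretely, $\omega$ maps $D_XTh$ to $D_{T'}Xh \oplus D_Xh$, so $\omega^*(d'\oplus D_Xh)$ recovers $D_XTh$ on the appropriate range; composing, one computes that $A(z)D_Xh = \Pi W(z)D_Xh$ and, using $W(z)D_Xh = \omega(D_X h)$ is not quite right since $d(z)$ is the resolvent image — rather I expand $d(z) = d + zA(z)d(z)$ and match. The upshot I expect is that, up to the invertible change of variable implemented by $D_X$, the operator $A_0 := A(0)$ is unitarily equivalent to $T^*$ (the adjoint of the shift component), and more generally $(I - zA(z))^{-1}$ transports to the resolvent $(I - zT^*)^{-1}$, whose Taylor coefficients are the powers $T^{*n}$.

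Granting that identification, condition \eqref{cfeq4.2a} becomes exactly $\|T^{*n}x\| \to 0$ for all $x \in \cl H$ (after absorbing the invertible $D_X$), i.e.\ $T^* \in C_{0\bullet}$, equivalently $T \in C_{\bullet 0}$: $T$ is a $C_{\bullet 0}$-isometry. But by the Wold decomposition an isometry splits as a unilateral shift plus a unitary, and a unitary $U$ never satisfies $\|U^{*n}x\| \to 0$ except on $\{0\}$. Hence the unitary part of $T$ is trivial and $T$ is a unilateral shift, as claimed. The main obstacle I anticipate is the bookkeeping in the second paragraph: verifying cleanly that under $\|X\|<1$ the function $A(z) = \Pi W(z)$ really is (conjugate to) the constant operator $T^*$ — i.e.\ that $W(z)$ does not depend on $z$ in an essential way and that $\Pi\omega$, read through $D_X$, is $T^*$. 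This requires carefully unwinding the definitions \eqref{cfeq2.4a}–\eqref{cfeq2.4d} together with \eqref{cfeq4.1}, using that $D_X T\cl H$ is already all of $\cl D_X$ so that $\omega$ is defined everywhere and the resolvent expansion $d(z) = \sum z^n A_0^n d$ makes sense with $A_0$ bounded; once $A_0 \cong T^*$ is in hand, the Wold-decomposition conclusion is immediate.
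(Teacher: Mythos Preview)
Your argument contains a genuine error at the very first step. You claim that $(D_XT\cl H)^- = \cl D_X$ ``since $T$ is an isometry and $D_X$ has dense range.'' But an isometry need not be surjective: $T\cl H$ is a \emph{closed} subspace of $\cl H$ (because $T$ is bounded below), and it equals $\cl H$ precisely when $T$ is unitary. Since $D_X$ is invertible, it is a linear homeomorphism, so $(D_XT\cl H)^- = D_X(T\cl H)$, and this equals $\cl D_X=\cl H$ if and only if $T\cl H=\cl H$. In fact one has
\[
\ker\bar\omega \;=\; \cl D_X \ominus (D_XT\cl H)^- \;=\; D_X^{-1}\bigl(\cl H\ominus T\cl H\bigr)\;=\;D_X^{-1}\ker T^*,
\]
which is nonzero as soon as $T$ has any shift part at all (compare the computation in the proof of Lemma~\ref{cflem4.1}). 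So your conclusion $\ker\bar\omega=\{0\}$ fails exactly in the case you are trying to reach; the free Schur contraction is \emph{not} trivial, $W(z)$ is \emph{not} forced to be the constant isometry $\omega$, and the lifting is not unique. The subsequent identification ``$A_0\cong T^*$'' and the $C_{\bullet 0}$ argument therefore have no footing.

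The paper's proof takes a completely different route that avoids the Schur-contraction parametrization entirely. It works directly with the Wold decomposition $\cl H=\cl H_0\oplus\cl H_1$ of $T$ and shows, using only the minimality of $U'$, that \emph{any} intertwining lifting of $X$ must agree with the given isometric lifting $Y_1$ on the unitary part $\cl H_1$. One then exploits $\|X\|<1$ by applying the Commutant Lifting Theorem to $X/\|X\|$ to manufacture an intertwining lifting $Y$ with $\|Y\|\le\|X\|<1$; the forced equality $Y|\cl H_1=Y_1|\cl H_1$ (with $Y_1$ isometric) then kills $\cl H_1$. If you want to salvage your strategy, you would need to control \emph{all} free Schur contractions simultaneously, not just the trivial one --- and that is essentially the difficulty the rest of Section~\ref{cfsec4} is devoted to.
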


\begin{proof}
 Let ${\cl H} = {\cl H}_0 \oplus {\cl H}_1$ be the Wold
 decomposition for $T$; that is, $T{\cl H}_0 \subset {\cl H}_0$,
  $T{\cl H}_1\subset {\cl H}_1$, $T|{\cl H}_0$ is a unilateral
  shift and $T|{\cl H}_1$ is unitary. Then
\[
 {\cl H}_1 = \bigcap^\infty_{n=0} T^n{\cl H}.
\]
Therefore, if $Y$ is any intertwining lifting for $X$ (that is,
\[
U'Y=YT, \quad P'Y=X),
\]
then we have
\[
 Y{\cl H}_1 \subseteq \bigcap^\infty_{n=0} U^{\prime n}{\cl K}' = {\cl R},
\]
where
\[
 {\cl K}' = {\cl R}^\bot \oplus {\cl R}
\]
is the Wold decomposition for $U'$. If $U'_1 = U'|{\cl R}$, where $Y_1$ is an isometric lifting of $X$, then $Z=Y-Y_1$ satisfies
\begin{align*}
U'_1(Z|{\cal H}_1) = Z(T|{\cl H}_1),\\
U^{\prime *}_1(Z|{\cl H}_1) = (Z|{\cl H}_1)(T|{\cal H}_1)^*
\end{align*}
since $U'_1$ and $T|{\cl H}_1$ are unitary. Therefore, $\ovl{Z{\cl H}_1}$ is a reducing subspace for $U'$ and is orthogonal to ${\cl H}'$ (because $P'Z = P'Y -P'Y_1 = 0$). Due to the minimality of $U'$ (that is,
\[
 {\cl K}' = \bigvee_{n>0} U^{\prime n}{\cl H}')
\]
we have $\ovl{Z{\cl H}_1} = \{0\}$ and hence
\begin{equation}\label{cfeq4.5}
 Y|{\cl H}_1 = Y_1|{\cl H}_1.
\end{equation}
But the Commutant Lifting Theorem applied to $X_0 = X/\|X\|$ yields a contractive intertwining lifting $Y_0$ of $X_0$. It follows that $Y := \|X\|Y_0$ is an intertwining lifting of $X$ such that $\|Y\|\le \|X\|<1$. From the relation \eqref{cfeq4.5} and the hypothesis that $Y_1$ is isometric, we conclude that ${\cl H}_1 = \{0\}$ and so $T=T_0$ is an unilateral shift.
\end{proof}

This result shows that if $X$ is a strict contraction,
 there cannot
exist an isometric $\Gamma$, unless $T$ is a unilateral shift.

\begin{exm}\label{cfexm4.1}
Let $U$ denote the canonical bilateral shift on $L^2(T)$; that is,
\[
 (Uf)(e^{it}) = e^{it}f(e^{it}) \quad \text{a.e.}\quad (f\in L^2(T))
\]
and let $S = U|H^2$ be the canonical unilateral shift on $H^2$. Let $V=U|L^2[(0,\pi))$ and $Q$ be the orthogonal projection of $L^2(T)=L^2([0,2\pi))$ onto $L^2([0,\pi))$. Then the following properties are immediate:
\begin{equation}\label{cfeq4.6}
 VQ = QS,\quad \ker Q =\{0\},\quad\text{and}\quad \ker Q^* =\{0\}.
\end{equation}
Now set
\begin{equation}\label{cfeq4.6a}
 T=V^*,\quad T'=S^*,\quad\text{and}\quad X=Q^*/2.\tag{4.6a}
\end{equation}
Then $T$ is unitary and $U'=U^*$ are unitary. Hence, there exists a unique intertwining lifting $Y$ if $X$ and its norm is equal to $\|X\|=1/2 < 1$.
\end{exm}

Thus even when the operators $T$ and $T'$ are very elementary, (in
this case, $T$ is a unitary operator of multiplicity one and $T'$ is
the backward shift of multiplicity one), there may not exist any
free Schur contraction that makes $\Gamma$
isometric. Again we don't see how to deduce this fact easily from
Propositions \ref{cfprop4.1} and \ref{cfprop4.2}.

In the study of the parametrization of all contractive intertwining liftings of a given intertwining contraction $X$, the case when $\|X\|<1$ is the most amenable to study. Proposition \ref{cfprop4.3} shows that in this case our  present study reduces to the case when $T$ is a unilateral shift. Related to this case we have the following .

\begin{lem}\label{cflem4.1}
 Assume $T$ is a unilateral shift, $T'$ is a $C_{\bullet 0}$-contraction (that is, $T^{*n}\to 0$ strongly) with dense range and $\|X\|<1$. If an isometric intertwining lifting $Y$ of $X$ exists, then we have
\begin{equation}\label{cfeq4.7}
 \dim \ker \bar\omega \le \dim \ker \bar\omega^*.
\end{equation}
\end{lem}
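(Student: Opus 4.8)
The plan is to exploit the dimension count that is hidden in the two necessary conditions of Proposition~\ref{cfprop4.1} together with the structural hypotheses on $T$ and $T'$. Recall that $\ker\bar\omega = {\cl D}_X \ominus (D_XT{\cl H})^-$ and, by \eqref{cfeq2.4c}, $\ker\bar\omega^* = ({\cl D}_{T'}\oplus {\cl D}_X)\ominus\operatorname{ran}\omega$, where $\omega$ is the isometry of \eqref{cfeq2.4a}. The free Schur contraction $R(\cdot)\in{\cl L}(\ker\bar\omega,\ker\bar\omega^*)$ of the isometric lifting $Y$ must, by Proposition~\ref{cfprop4.1}, satisfy \eqref{cfeq4.2}, i.e. $D_{R(\rho e^{i\theta})}(1-\bar\omega^*\bar\omega)d(e^{i\theta})\to 0$ in $L^2$; since $T$ is a unilateral shift and $\|X\|<1$, one expects $D_X$ to be invertible on ${\cl D}_X$-valued $H^2$, so $(1-\bar\omega^*\bar\omega)d(\cdot)$ ranges over a ``large'' subset of $H^2(\ker\bar\omega)$, forcing $D_{R(e^{i\theta})}=0$ a.e., i.e. $R(\cdot)$ is \emph{inner}. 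An inner function from $\ker\bar\omega$ into $\ker\bar\omega^*$ can exist only if $\dim\ker\bar\omega\le\dim\ker\bar\omega^*$, which is exactly \eqref{cfeq4.7}.

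Concretely, I would proceed as follows. First, since $\|X\|<1$ we have $D_X=(I-X^*X)^{1/2}\ge(1-\|X\|^2)^{1/2}I>0$, so ${\cl D}_X={\cl H}$ and $D_X$ is boundedly invertible; in particular the map $h\mapsto D_Xh$ identifies ${\cl H}$ with $\mathcal D_X$. Second, because $T$ is a unilateral shift, $(D_XT{\cl H})^-=D_X(T{\cl H})^-=D_X(\operatorname{ran}T)^-$, and $\operatorname{ran}T$ has codimension $\dim{\cl L}$ in ${\cl H}$ where ${\cl L}=\ker T^*$ is the wandering subspace; thus $\ker\bar\omega=D_X{\cl H}\ominus D_X(\operatorname{ran}T)^-$ is (via the invertible $D_X$, up to a bounded isomorphism, not an isometry, but dimensions are preserved) $\dim\ker\bar\omega=\dim{\cl L}=\operatorname{mult}(T)$. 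Third, using $A(z)=\Pi W(z)$ from \eqref{cfeq4.1} and the formula \eqref{cfeq4.1c}, analyze $(1-\bar\omega^*\bar\omega)d(z)$: since $\bar\omega^*\bar\omega$ is the orthogonal projection of $\mathcal D_X$ onto $(D_XT{\cl H})^-$, the function $z\mapsto(1-\bar\omega^*\bar\omega)d(z)$ is a $\ker\bar\omega$-valued $H^2$ function whose values, as $d$ ranges over ${\cl D}$ and as one takes finite linear combinations of shifts (using $T'$ being $C_{\bullet 0}$ with dense range so that the lifting space behaves like $H^2(\mathcal D_{T'})$ with no residual unitary part), are dense in $H^2(\ker\bar\omega)$. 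Then \eqref{cfeq4.2} gives $\int_0^{2\pi}\|D_{R(e^{i\theta})}g(e^{i\theta})\|^2\,d\theta=0$ for a dense set of $g\in H^2(\ker\bar\omega)$, whence $D_{R(e^{i\theta})}=0$ a.e., i.e. $R$ is an inner function. Finally, an $\mathcal L(\ker\bar\omega,\ker\bar\omega^*)$-valued inner function has isometric boundary values a.e., so $\dim\ker\bar\omega\le\dim\ker\bar\omega^*$, which is \eqref{cfeq4.7}.

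The main obstacle I anticipate is the density step: showing that the vectors $(1-\bar\omega^*\bar\omega)d(e^{i\theta})$, for $d\in{\cl D}$ (or suitable combinations built from the module structure over $H^\infty$), span a dense subspace of $H^2(\ker\bar\omega)$, so that condition \eqref{cfeq4.2} really forces $D_{R}=0$ a.e. rather than merely vanishing on a small set. This is where the hypotheses ``$T$ is a unilateral shift,'' ``$T'$ is $C_{\bullet 0}$ with dense range,'' and ``$\|X\|<1$'' all have to be used simultaneously: the shift hypothesis makes $(I-zA(z))^{-1}$ well-behaved and gives the wandering-subspace dimension count; the $C_{\bullet 0}$ and dense-range hypotheses on $T'$ guarantee that the minimal isometric lifting space $\mathcal K'$ has the form $H^2(\mathcal D_{T'})$ with $\mathcal D_{T'}$ nondegenerate and no residual part obstructing the argument; and $\|X\|<1$ makes $D_X$ invertible so that $d(z)$ is not annihilated. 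Once density is in hand, the passage ``$D_R=0$ a.e.\ $\Rightarrow$ $R$ inner $\Rightarrow$ $\dim\ker\bar\omega\le\dim\ker\bar\omega^*$'' is routine.
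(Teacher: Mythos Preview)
Your approach is genuinely different from the paper's, and the step you yourself flag as ``the main obstacle'' is a real gap that your outline does not close.

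The paper never touches the free Schur contraction $R(\cdot)$ at all. Instead it argues as follows. Since $T'$ is $C_{\bullet 0}$, its minimal isometric lifting $U'$ is a unilateral shift; and $T$ is a unilateral shift by hypothesis. Pass to the minimal unitary extensions $\widehat T\in{\cl L}(\widehat{\cl H})$ and $\widehat U'\in{\cl L}(\widehat{\cl K}')$; these are bilateral shifts of multiplicities $\mu$ and $\nu$. The isometric lifting $Y$ extends uniquely to an isometry $\widehat Y$ with $\widehat U'\widehat Y=\widehat Y\widehat T$, which immediately gives $\mu\le\nu$. The proof is then finished by the explicit identifications (using that $D_X$ is invertible because $\|X\|<1$)
\[
\ker\bar\omega=D_X^{-1}\ker T^*,\qquad
\ker\bar\omega^*=\{D_X^{-1}X^*D_{T'}d'\oplus d':d'\in{\cl D}_{T'}\},
\]
so that $\dim\ker\bar\omega=\dim\ker T^*=\mu$ and $\dim\ker\bar\omega^*=\dim{\cl D}_{T'}=\dim{\cl D}_{T'^*}=\nu$, the last equality using $\ker T'^*=\{0\}$ (dense range).

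By contrast, your plan is to feed condition \eqref{cfeq4.2} back into $R$ and conclude that $R$ is inner. The difficulty is exactly the density step you isolate, and it is not merely a technicality. First, $d(z)=(I-zA(z))^{-1}d$ depends on $R$ through $A=\Pi W$ and \eqref{cfeq4.1c}, so the set of test functions $(1-\bar\omega^*\bar\omega)d(\cdot)$ is itself $R$-dependent; you cannot treat it as a fixed dense family against which to test $D_R$. Second, even ignoring that circularity, there is no evident $H^\infty$-module action under which the family $\{(1-\bar\omega^*\bar\omega)d(\cdot):d\in{\cl D}\}$ is invariant, so your appeal to ``finite linear combinations of shifts'' has no obvious meaning here; condition \eqref{cfeq4.2} gives vanishing of $D_R$ only along these particular trajectories, which a priori need not force $D_{R(e^{i\theta})}=0$ a.e. Third, nothing in the hypotheses rules out, for instance, that $R$ is a non-inner function whose defect happens to annihilate the specific range of $(1-\bar\omega^*\bar\omega)d(\cdot)$ on the boundary. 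So the implication ``\eqref{cfeq4.2} $\Rightarrow$ $R$ inner'' is unproved and quite possibly false in the generality you need. The paper's route sidesteps all of this by comparing wandering-subspace dimensions directly.
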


\begin{proof}
In this case the space ${\cl R}$ introduced in the proof of
Proposition \ref{cfprop4.3} is $\{0\}$, or equivalently, $U'$ is also
a unilateral shift. Consider the minimal unitary extensions
$\widehat U'\in {\cl L}(\widehat{\cl K}')$ and $\widehat T\in{\cl
L}(\widehat{\cl H})$ of $U'$ and $T$, respectively. Let $\widehat
Y\in {\cl L}(\widehat{\cl H}, \widehat{\cl K}')$ be the unique
extension (by Proposition \ref{cfprop4.3}) of $Y$ satisfying
\[
 \widehat U'\widehat Y = \widehat Y\widehat T.
\]
It is easy to see that $\widehat Y$ is isometric and thus the
multiplicities $\nu$ and $\mu$ of the bilateral shifts $\widehat U'$
and $\widehat T$, respectively, satisfy
\begin{equation}\label{cfeq4.7a}
 \mu \le \nu.\tag{4.7a}
\end{equation}
The inequality \eqref{cfeq4.7} follows directly from the equalities
\begin{align}\label{cfeq4.7b}
 &\dim \ker \bar\omega = \mu\tag{4.7b}\\
\label{cfeq4.7c}
&\dim \ker \bar\omega^* = \nu.\tag{4.7c}
\end{align}
To prove \eqref{cfeq4.7b} and \eqref{cfeq4.7c} we notice, using the
fact that $D_X$ is invertible, that
\begin{align*}
\ker\bar\omega &= D^{-1}_X \ker T^*\quad \text{and}\\
\ker\bar\omega^* &= \{D^{-1}_X X^*D_{T'}d'\oplus (d')\colon \ d'\in {\cl D}_{T'}\}.
\end{align*}
Thus
\[
 \dim \ker \bar\omega = \dim ker T^* = \mu
\]
and
\[
 \dim \ker \bar\omega^* = \dim {\cl D}_{T'} = \dim {\cl D}_{T^{\prime *}} = \nu,
\]
where the second equality follows from the fact that $\ker T^{\prime *} = \{0\}$.
\end{proof}

The preceding lemma shows that the case when the  inequality
\eqref{cfeq4.7} holds is of some interest. Therefore, \emph{through
the remaining part of this section we will assume that
\eqref{cfeq4.7} is valid}. In this case we need study only the case
when the free Schur contraction $R(z)$ is independent of $z$; that
is,  when $W(z) = W(0)$ $(z\in {\bb D})$. According to Corollary
\ref{cfcor3.3}, in this case $\Gamma(\cdot)$ is an isometry if and
only if $A_0 = \Pi W(0)$ is a $C_{0\bullet}$-contraction and $W(0)$
is an isometry. This last restriction is obviously equivalent to the
free Schur contraction $R(z)\equiv R(0)$ $(z\in {\bb D})$ being an
isometry. Therefore, if there exists such a free Schur contraction
for which the corresponding $\Gamma(\cdot)$ were not an isometry,
the operator
\begin{equation}\label{cfeq4.8}
 V = A^*_0 = W(0)^*\pi^* \in {\cl L}({\cl D})
\end{equation}
would not be a $C_{\bullet 0}$-contraction. Let  $\widehat V\in {\cl
L}(\widehat{\cl D})$ denote the minimal isometric lifting of $V$ and
let
\[
 \widehat{\cl D} = {\cl R}^\bot \oplus {\cl R}
\]
be the Wold decomposition for $\widehat V$,  where $\widehat V|{\cl
R}$ is the unitary part of $\widehat V$. Since $V$ is not a
$C_{\bullet 0}$-contraction, there exists an $r_0\in {\cl R}$
satisfying $d_0 = Pr_0\ne 0$, where $P$ denotes the orthogonal
projection of $\widehat{\cl D}$ onto ${\cl D}$. Let
\begin{equation}\label{cfeq4.8a}
 d_n = P\widehat V^{*n} r_0\qquad (n=0,1,\ldots).\tag{4.8a}
\end{equation}
Then
\begin{align}\label{cfeq4.8b}
 &Vd_{n+1} = P\widehat V\widehat V^{*n+1}r_0 = P\widehat V^{*n}r_0 = d_n,\tag{4.8b}\\
\label{cfeq4.8c}
&0 \le \|d_0\| \le \|d_1\| \le\cdots\le \|d_n\|\le\cdots,\tag{4.8c}
\end{align}
and
\begin{equation}\label{cfeq4.8d}
 \|d_n\|\le \|r_0\|\qquad (n=0,1,2,\ldots).\tag{4.8d}
\end{equation}

At this moment it is worth noticing that  we have actually proven part
of the following characterization of a contraction which is not a
$C_{\bullet 0}$-contraction, a fact which may be useful elsewhere.

\begin{lem}\label{cflem4.2}
 Let $T\in {\cl L}({\cl H})$ be a contraction. Then $T$ is not a $C_{\bullet 0}$-contraction
 if and only if there exists a
 bounded sequence $\{h_n\}^\infty_{n=0}\subset {\cl H}$ such that
\begin{equation}\label{cfeq4.9}
 h_0\ne 0,\quad h_n= Th_{n+1} \qquad (n=0,1,\ldots).
\end{equation}
\end{lem}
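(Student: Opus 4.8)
The plan is to prove the two implications of Lemma \ref{cflem4.2} separately, with the forward direction being essentially a repackaging of the construction just carried out in \eqref{cfeq4.8a}--\eqref{cfeq4.8d} and the reverse direction being the genuinely new (though short) content. First, suppose $T$ is not a $C_{\bullet 0}$-contraction. I would let $\widehat T\in{\cl L}(\widehat{\cl H})$ be the minimal isometric lifting of $T$ (equivalently, since only the unitary part of the Wold decomposition matters here, one could use the minimal unitary dilation), write the Wold decomposition $\widehat{\cl H} = {\cl R}^\perp\oplus{\cl R}$ with $\widehat T|{\cl R}$ the unitary part, and let $P$ be the orthogonal projection of $\widehat{\cl H}$ onto ${\cl H}$. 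The point is that $\|T^{*n}h\|\to 0$ for all $h\in{\cl H}$ exactly when $P|{\cl R} = 0$ (this is the standard identification of the $C_{\bullet 0}$-part with the orthocomplement of the unitary part of the dilation); so the failure of the $C_{\bullet 0}$ property produces $r_0\in{\cl R}$ with $h_0 := Pr_0\ne 0$. Setting $h_n = P\widehat T^{*n}r_0$, one gets $Th_{n+1} = P\widehat T\widehat T^{*(n+1)}r_0 = P\widehat T^{*n}r_0 = h_n$ (using $P\widehat T = TP$ on ${\cl H}$ together with $\widehat T\widehat T^* = I$ on ${\cl R}$), and $\|h_n\|\le\|\widehat T^{*n}r_0\| = \|r_0\|$ since $\widehat T$ is unitary on ${\cl R}$. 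This is exactly \eqref{cfeq4.8b}--\eqref{cfeq4.8d}, so this half is already done in the text and only needs to be restated.

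For the converse, suppose such a bounded sequence $\{h_n\}$ exists with $h_0\ne 0$ and $h_n = Th_{n+1}$. I would argue directly that $T$ cannot be $C_{\bullet 0}$ by showing $\|T^{*n}h_n\|$ does not go to zero, or more cleanly by a Cauchy--Schwarz / norm estimate: for each $n$, iterating $h_0 = Th_1 = T^2 h_2 = \cdots = T^n h_n$ gives $h_0 = T^n h_n$, hence
\[
\|h_0\|^2 = (T^n h_n, h_0) = (h_n, T^{*n}h_0) \le \|h_n\|\,\|T^{*n}h_0\| \le M\,\|T^{*n}h_0\|,
\]
where $M = \sup_n\|h_n\| < \infty$. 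Since $\|h_0\| > 0$, this forces $\|T^{*n}h_0\| \ge \|h_0\|^2/M > 0$ for all $n$, so $T^{*n}h_0\not\to 0$ and $T$ is not a $C_{\bullet 0}$-contraction. (Alternatively one observes $\|h_0\| = \|T^n h_n\| \le \|h_n\|$, which already says the sequence of norms is nondecreasing and bounded below by $\|h_0\|>0$; combined with the displayed inequality this pins things down.)

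The only mild subtlety — and the step I would flag as the one needing care — is the identification in the forward direction of the $C_{\bullet 0}$-property with the vanishing of $P$ on the unitary part ${\cl R}$ of the minimal isometric lifting. One should be a little careful that the minimal \emph{isometric} lifting (rather than the unitary dilation) still has the property that $\bigcap_n \widehat T^n\widehat{\cl H}$ compressed to ${\cl H}$ detects exactly the non-$C_{\bullet 0}$ behaviour; concretely, for $h\in{\cl H}$ one has $\|T^{*n}h\| = \|P\widehat T^{*n}h\|$ and $\widehat T^{*n}h\to$ its projection onto ${\cl R}$ in a suitable sense, so $\|T^{*n}h\|\to\|P_{\cl R}h\|$ — but this is precisely the content already used in the proof of Proposition \ref{cfprop4.3} and in the paragraph preceding the lemma, so it may simply be cited. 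Everything else is a two-line computation.
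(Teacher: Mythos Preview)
Your proof is correct. The forward direction is exactly what the paper does in the paragraph preceding the lemma (and the paper explicitly says ``It remains to prove'' only the converse), so your treatment there matches.

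For the converse you take a genuinely different route from the paper. The paper argues as follows: since $\|h_n\|$ is nondecreasing and bounded by $M$, one picks $n_0$ with $\|h_{n_0}\|$ close to $M$ (specifically $\ge\sqrt{15}\,M/8$), then estimates $\|(I-T^{*N}T^N)h_{n_0+N}\|$ via the Cauchy--Schwarz-type bound $\|(I-T^{*N}T^N)h\|^4\le((I-T^{*N}T^N)h,h)\|h\|^2$ together with $((I-T^{*N}T^N)h_{n_0+N},h_{n_0+N})=\|h_{n_0+N}\|^2-\|h_{n_0}\|^2\le M^2/16$; this yields $\|T^{*N}h_{n_0}\|=\|T^{*N}T^Nh_{n_0+N}\|\ge\|h_{n_0+N}\|-M/4>0$. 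Your argument bypasses all of this by applying Cauchy--Schwarz directly to the identity $\|h_0\|^2=(h_n,T^{*n}h_0)$, giving $\|T^{*n}h_0\|\ge\|h_0\|^2/M$ in one line. Your approach is shorter and identifies $h_0$ itself as the witness vector; the paper's approach locates the witness at a possibly later index $h_{n_0}$ and extracts a bit more, namely a quantitative lower bound depending on how close $\|h_{n_0}\|$ is to the supremum, though that extra information is not used afterwards.
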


\begin{proof}
 It remains to prove that if such  a sequence exists then $T$ is not a $C_{\bullet 0}$-contraction. To this end note that
\[
\|h_0\| \le \|h_1\| \le\cdots\le \|h_n\| \le \|h_{n+1}\| \le\cdots\le M<\infty,
\]
where $M$ is the supremum in \eqref{cfeq4.9}. Choose $n_0$ large enough for $\|h_{n_0}\|\ge \sqrt{15}~M/8$ to hold. Then for any $N=0,1,\ldots$, we have
\begin{align*}
\|(I-T^{*N}T^N)h_{{n_0}+N}\|^4 &\le \|(I-T^{*N}T^N)^{1/2} h_{n_0+N}\|^2 \|h_{n_0+N}\|^2\\
&\le ((I-T^{*N}T^N)h_{n_0+N}, h_{n_0+N})M^2 = (\|h_{n_0+N}\|^2 - \|h_{n_0}\|^2)M^2\\
&\le M^4/16.
\end{align*}
Hence
\begin{align*}
\|T^Nh_{n_0}\| &\ge \|h_{n_0+N}\| - \|(I-T^{*N}T^N)h_{n_0+N}\| \\
&\ge \sqrt{15}~M/8 - M/4>0
\end{align*}
for all $N=0,1,\ldots$~. This proves that $T$ is not a $C_{\bullet 0}$-contraction.
\end{proof}

We return now to our particular considerations. The relation \eqref{cfeq4.8b} can be written as
\begin{equation}\label{cfeq4.10}
 W(0)^*\Pi^*d_{n+1} = d_n\qquad (n=0,1,\ldots).
\end{equation}
Applying  $\bar\omega$ to these last equalities we obtain
\begin{equation}\label{cfeq4.10a}
\bar\omega\bar\omega^* \Pi^* d_{n+1} = \bar\omega d_n\qquad (n=0,1,\ldots).\tag{4.10a}
\end{equation}
Note that \eqref{cfeq4.10} also implies
\begin{equation}\label{cfeq4.10b}
 \|d_0\|\le \|d_1\|\le \|d_2\|\le\ldots~.\tag{4.10b}
\end{equation}
Thus we obtain the following.

\begin{lem}\label{cflem4.3}
 Let $\omega$ have $($besides \eqref{cfeq4.7}$)$ the following property:
\begin{itemize}
 \item[\rm (a)] Any sequence $\{d_n\}^\infty_{n=0} \subset {\cl D}$ for which \eqref{cfeq4.10a} and \eqref{cfeq4.10b}  are valid is either identically zero or unbounded.
\end{itemize}
Then for any isometric free Schur contraction $R(z) \equiv R(0)$ $(z\in{\bb D})$,  the corresponding operator $\Gamma(\cdot)$ is also an isometry.
\end{lem}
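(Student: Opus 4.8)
The plan is to reduce the statement to an application of Corollary \ref{cfcor3.3} by showing that, under hypothesis (a) on $\omega$, every isometric constant free Schur contraction $R(z)\equiv R(0)$ forces the operator $A_0=\Pi W(0)$ to be a $C_{0\bullet}$-contraction, which together with $W(0)$ being an isometry is exactly the criterion (d$'$) for $\Gamma(\cdot)$ to be isometric. The argument is essentially by contradiction: suppose $R(0)$ is an isometric free Schur contraction but the associated $\Gamma(\cdot)$ fails to be an isometry. Then, as recorded in the paragraph preceding \eqref{cfeq4.8}, $W(0)$ is an isometry (since $R(0)$ is), so the only way $\Gamma(\cdot)$ can fail to be isometric is that $A_0$ is not $C_{0\bullet}$, equivalently that $V=A_0^*=W(0)^*\Pi^*$ is not a $C_{\bullet 0}$-contraction.

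Next I would invoke Lemma \ref{cflem4.2} (or equivalently the explicit construction in \eqref{cfeq4.8a}--\eqref{cfeq4.8d} via the minimal isometric lifting $\widehat V$ of $V$ and its Wold decomposition) to produce a nonzero bounded sequence $\{d_n\}_{n\ge0}\subset{\cl D}$ with $d_0\ne0$ and $V d_{n+1}=d_n$ for all $n$, i.e. $W(0)^*\Pi^* d_{n+1}=d_n$. This is precisely relation \eqref{cfeq4.10}. Applying $\bar\omega$ to both sides and using $\bar\omega W(0)^*=\bar\omega\bar\omega^*\Pi^*$ — which holds because $W(0)$ restricted to $(D_X T{\cl H})^-=\ker\bar\omega^{\perp}$ equals $\omega$, and on the orthogonal complement $W(0)$ agrees with $R(0)$ mapping into $\ker\bar\omega^*$, so that $\bar\omega$ annihilates the $R(0)$-part — gives exactly \eqref{cfeq4.10a}. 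Meanwhile \eqref{cfeq4.10b} is immediate: since $W(0)$ is an isometry and $\Pi^*$ is an isometry, $V^*$ is a contraction, hence $\|d_n\|=\|V^*{}^? d_{n}\|$... more directly, $\|d_n\|=\|V d_{n+1}\|\le\|d_{n+1}\|$ because $V=A_0^*$ is a contraction. Thus the sequence $\{d_n\}$ satisfies both \eqref{cfeq4.10a} and \eqref{cfeq4.10b}, is bounded (by \eqref{cfeq4.8d}), and is not identically zero (since $d_0\ne0$), contradicting hypothesis (a).

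Therefore no such $R(0)$ can exist, i.e. every isometric constant free Schur contraction yields an isometric $\Gamma(\cdot)$, which is the conclusion of the lemma.

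The main obstacle I anticipate is being careful and precise about the intertwining identity that turns \eqref{cfeq4.10} into \eqref{cfeq4.10a}: one must check that $\bar\omega\,W(0)^*\Pi^* = \bar\omega\bar\omega^*\Pi^*$ on the relevant vectors, which amounts to understanding how $W(0)$ decomposes with respect to the splitting ${\cl D}=(D_XT{\cl H})^-\oplus\ker\bar\omega$ and the target splitting ${\cl D}_{T'}\oplus{\cl D}_X = \operatorname{ran}\omega \oplus \ker\bar\omega^*$ (see \eqref{cfeq2.4b}, \eqref{cfeq2.4c} and \eqref{cfeq4.1c}). Everything else — the production of the bounded sequence, the monotonicity \eqref{cfeq4.10b}, and the final appeal to Corollary \ref{cfcor3.3} — is routine once that identity is in hand. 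A minor point worth double-checking is that hypothesis \eqref{cfeq4.7} is what licenses restricting attention to constant $R$ in the first place, as explained in the paragraph introducing \eqref{cfeq4.8}; the lemma as stated simply carries \eqref{cfeq4.7} along and concludes about constant isometric $R$, so no further use of \eqref{cfeq4.7} is needed inside the proof itself.
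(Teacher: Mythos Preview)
Your proposal is correct and follows essentially the same approach as the paper: the lemma is stated immediately after the discussion in \eqref{cfeq4.8}--\eqref{cfeq4.10b}, and that discussion \emph{is} the proof --- assume $R(0)$ is isometric but $\Gamma(\cdot)$ is not, invoke Corollary~\ref{cfcor3.3} to conclude $V=A_0^*$ is not $C_{\bullet 0}$, manufacture the bounded nonzero sequence via the Wold decomposition of the minimal isometric lifting of $V$, and observe it satisfies \eqref{cfeq4.10a}--\eqref{cfeq4.10b}, contradicting (a). Your identification of the one delicate step (that $\bar\omega W(0)^*=\bar\omega\bar\omega^*$, which follows from $W(0)=\bar\omega+R(0)(I-\bar\omega^*\bar\omega)$ and the partial-isometry identity $\bar\omega(I-\bar\omega^*\bar\omega)=0$) is exactly right.
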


This lemma does not preclude the possibility that the conclusion of Lemma \ref{cflem4.3} holds under a weaker condition than the condition (a).

Indeed, let us assume that we have a sequence $\{d_n\}^\infty_{n=0} \subset {\cl D}$ satisfying the condition \eqref{cfeq4.10}. We extend recursively the definition of the $d_n$'s as follows:
\begin{equation}\label{cfeq4.11}
 d_{n-1} = W(0)^* \Pi^* d_n
\end{equation}
for $n=0, n=-1,\ldots$~. Let ${\cl D}_0$ be the linear space spanned by $\{d_n\}^\infty_{n=-\infty}$. Then the linear map $C$ defined from $(I-\bar\omega\bar\omega^*)\Pi^*{\cl D}_0$ into $(I-\bar\omega^*\bar\omega){\cl D}_0$ by
\begin{equation}\label{cfeq4.11a}
 C(I-\bar\omega\bar\omega^*)\Pi^*d_{n+1} = (I-\bar\omega^*\bar\omega)d_n\qquad (n\in {\bb Z})\tag{4.11a}
\end{equation}
extends  by continuity to $\ovl C = R(0)^*|((I-\bar\omega\bar\omega^*)\Pi^*{\cl D}_0)^-$. Clearly, $\ovl C$ \emph{is a contraction and its definition depends only on $\omega$ and the sequence $\{d_n\}^\infty_{n=0}$ satisfying} \eqref{cfeq4.10}.   Moreover, by its construction $\ovl C$ extends to a co-isometry (namely $R(0)^*$) from $\ker \bar\omega^*$ onto $\ker \bar\omega$.

 To continue our analysis we now need the following.

\begin{lem}\label{cflem4.4}
Let ${\cl H}$ and ${\cl H}'$ be two Hilbert spaces with subspaces
${\cl M}\subset {\cl H}$ and ${\cl M}'\subset {\cl H}'$. Let $C\in
{\cl L}({\cl M}', {\cl M})$ be a contraction with dense range in
${\cl M}$. Then $C$ has a coisometric extension $\widehat{C}\in {\cl
L}({\cl H}', {\cl H})$ if and only if
\begin{equation}\label{cfeq4.12}
\dim({\cl H}' \ominus {\cl M}') \ge \dim(({\cl H}\ominus {\cl M}) \oplus {\cl D}_{C^*}).
\end{equation}
\end{lem}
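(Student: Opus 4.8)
The plan is to reduce the statement to a known dilation-theoretic fact: a contraction $C$ with dense range extends to a coisometry precisely when there is enough room in the defect spaces to absorb the "missing" part of $C$. Since $\widehat C$ is coisometric iff $\widehat C^*$ is an isometry, I would first pass to adjoints. The map $C^*\in{\cl L}({\cl M},{\cl M}')$ is an injective contraction (injective because $C$ has dense range), and a coisometric extension $\widehat C\in{\cl L}({\cl H}',{\cl H})$ of $C$ corresponds to an isometric operator $\widehat C^*\in{\cl L}({\cl H},{\cl H}')$ whose compression to ${\cl M}\to{\cl M}'$ equals $C^*$ — more precisely, $P_{{\cl M}'}\widehat C^*|_{\cl M}=C^*$ and $\widehat C^*$ maps ${\cl H}$ isometrically. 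The condition we must meet is \eqref{cfeq4.12}, i.e.\ $\dim({\cl H}'\ominus{\cl M}')\ge\dim(({\cl H}\ominus{\cl M})\oplus{\cl D}_{C^*})$.

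For the necessity direction, I would argue as follows. Suppose $\widehat C$ is a coisometric extension, so $\widehat C^*$ is an isometry extending $C^*$ in the sense that $\widehat C^*|_{\cl M}$ followed by the projection onto ${\cl M}'$ gives $C^*$. For $m\in{\cl M}$ write $\widehat C^*m = C^*m \oplus r(m)$ with $r(m)\in{\cl H}'\ominus{\cl M}'$; isometry of $\widehat C^*$ forces $\|r(m)\|^2=\|m\|^2-\|C^*m\|^2=\|D_{C^*}m\|^2$, so $m\mapsto r(m)$ factors through an isometry of ${\cl D}_{C^*}$ into ${\cl H}'\ominus{\cl M}'$. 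Moreover $\widehat C^*({\cl H}\ominus{\cl M})$ is orthogonal to $\widehat C^*{\cl M}$, hence in particular its projection onto ${\cl M}'$... here one must be a little careful: $\widehat C^*({\cl H}\ominus{\cl M})$ need not lie in ${\cl H}'\ominus{\cl M}'$. The cleaner route is to observe that $\operatorname{ran}\widehat C^*\supseteq\widehat C^*({\cl H}\ominus{\cl M})$ has dimension $\dim({\cl H}\ominus{\cl M})$, is orthogonal to $\widehat C^*{\cl M}\supseteq C^*{\cl M}$, and that the closure of $C^*{\cl M}$ is dense in ${\cl M}'$ only if... Actually the robust argument is: $\operatorname{ran}\widehat C^*$ is a closed subspace (isometry), it contains $\overline{C^*{\cl M}}$ and the orthocomplement of $\operatorname{ran}\widehat C^*$ inside ${\cl H}'$ together with ${\cl H}'\ominus\overline{C^*{\cl M}}$ account for the discrepancy; a dimension count using $\dim{\cl H} = \dim{\cl M}+\dim({\cl H}\ominus{\cl M})$ and $\dim\operatorname{ran}\widehat C^*=\dim{\cl H}$, plus the fact that $\overline{C^*{\cl M}}$ has codimension in $\operatorname{ran}(\widehat C^*|_{\cl M})$ equal to $\dim{\cl D}_{C^*}$, yields \eqref{cfeq4.12}.

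For sufficiency, I would construct $\widehat C^*$ explicitly. On ${\cl M}$ set $\widehat C^* m = C^* m \oplus \iota D_{C^*} m$, where $\iota\colon{\cl D}_{C^*}\to{\cl H}'\ominus{\cl M}'$ is an isometric embedding, which exists once $\dim{\cl D}_{C^*}\le\dim({\cl H}'\ominus{\cl M}')$ — a consequence of \eqref{cfeq4.12}. This defines an isometry from ${\cl M}$ into ${\cl H}'$ whose range is $\{C^*m\oplus\iota D_{C^*}m\}$. Its orthogonal complement in ${\cl H}'$ has dimension $\dim{\cl H}'-\dim{\cl M}=\dim({\cl H}'\ominus{\cl M}')-\dim{\cl D}_{C^*}$ (after cancelling the copy of ${\cl M}$ realized inside; one checks the ranges fit), and by \eqref{cfeq4.12} this is at least $\dim({\cl H}\ominus{\cl M})$. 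Hence we may extend $\widehat C^*$ to all of ${\cl H}$ by sending ${\cl H}\ominus{\cl M}$ isometrically onto a subspace of that orthogonal complement. The resulting $\widehat C^*\in{\cl L}({\cl H},{\cl H}')$ is an isometry, and by construction $P_{{\cl M}'}\widehat C^*|_{\cl M}=C^*$, so $\widehat C=(\widehat C^*)^*$ is a coisometric extension of $C$. The main obstacle I anticipate is the necessity direction's bookkeeping: one has to track how $\widehat C^*({\cl H}\ominus{\cl M})$ sits relative to ${\cl M}'$ and ${\cl D}_{C^*}$ without assuming it is orthogonal to ${\cl M}'$, and the dimension count must be done with enough care that it is valid in the infinite-dimensional case (using cardinal arithmetic of Hilbert-space dimensions rather than naive subtraction).
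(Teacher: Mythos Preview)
Your overall strategy---pass to $\widehat C^*$, decompose $\widehat C^*m=C^*m\oplus(\text{defect part})$, and count dimensions---is the paper's approach. But there is a genuine gap, and it appears in both directions from the same source: you translate ``$\widehat C$ extends $C$'' only as the compression condition $P_{{\cl M}'}\widehat C^*|_{\cl M}=C^*$, whereas the actual extension condition $\widehat C|_{{\cl M}'}=C$ is strictly stronger and gives $P_{{\cl M}'}\widehat C^*=C^*P_{\cl M}$ on all of ${\cl H}$.

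For necessity this resolves exactly the difficulty you flag. If $h\in{\cl H}\ominus{\cl M}$ and $m'\in{\cl M}'$, then $(\widehat C^*h,m')=(h,\widehat Cm')=(h,Cm')=0$ because $Cm'\in{\cl M}$; hence $\widehat C^*({\cl H}\ominus{\cl M})\subset{\cl H}'\ominus{\cl M}'$ automatically. Since $\widehat C^*$ is an isometry, this image is also orthogonal to the range of your defect embedding $\iota$, and \eqref{cfeq4.12} follows directly---no cardinal-arithmetic subtraction is needed.

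For sufficiency the gap is more serious: the implication ``$P_{{\cl M}'}\widehat C^*|_{\cl M}=C^*$, so $\widehat C$ extends $C$'' is false in general. You must also ensure $\widehat C^*({\cl H}\ominus{\cl M})\subset{\cl H}'\ominus{\cl M}'$; otherwise $\widehat Cm'$ can have a nonzero component in ${\cl H}\ominus{\cl M}$. Your construction sends ${\cl H}\ominus{\cl M}$ into $(\widehat C^*{\cl M})^\perp$ in ${\cl H}'$, but if $C^*$ does not have dense range in ${\cl M}'$ that complement meets ${\cl M}'$ nontrivially, and your dimension formula $\dim{\cl H}'-\dim{\cl M}$ for it is not meaningful in infinite dimensions. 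The fix (and the paper's route) is to use \eqref{cfeq4.12} to pick, \emph{inside} ${\cl H}'\ominus{\cl M}'$, two mutually orthogonal subspaces ${\cl X}$ and ${\cl Y}$ with $\dim{\cl X}=\dim{\cl D}_{C^*}$ and $\dim{\cl Y}=\dim({\cl H}\ominus{\cl M})$, embed ${\cl D}_{C^*}$ into ${\cl X}$ and map ${\cl H}\ominus{\cl M}$ unitarily onto ${\cl Y}$. Then $\widehat C^*({\cl H}\ominus{\cl M})\subset{\cl H}'\ominus{\cl M}'$ by construction, and $\widehat C|_{{\cl M}'}=C$ follows.
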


\begin{proof}
If a coisometric extension $\widehat C$ of $C$ exists, then for $h\in {\cl H} \ominus {\cl M}$ we have
\[
(\widehat C^*h,m') = (h,Cm') = 0\qquad (m'\in {\cl M}')
\]
and so
\begin{equation}\label{cfeq4.12a}
\widehat C^*({\cl H}\ominus {\cl M}) \subset {\cl H}\ominus {\cl M}'.\tag{4.12a}
\end{equation}
Clearly, we also have
\begin{equation}\label{cfeq4.12b}
\widehat C^*{\cl M}\perp \widehat C^*({\cl H}\ominus {\cl M}),\tag{4.12b}
\end{equation}
and
\begin{equation}\label{cfeq4.12c}
P'_{{\cl M}'}\widehat C^*|{\cl M} = C^*, \text{ where $P'_{\cl M}$ is the orthogonal projection of ${\cl H}'$ onto } {\cl M}'.\tag{4.12c}
\end{equation}
Thus
\begin{equation}\label{cfeq4.12d}
\widehat C^*m = C^*m + XD_{C^*}m\qquad (m\in {\cl M}),\tag{4.12d}
\end{equation}
where $X \in {\cl L}({\cl D}_{C^*}, {\cl H}'\ominus {\cl M}')$ is an isometry. Due to \eqref{cfeq4.12b} we have
\[
X{\cl D}_{C^*}\perp \widehat C^*({\cl H}\ominus {\cl M})
\]
and therefore \eqref{cfeq4.12} holds. Conversely, if \eqref{cfeq4.12} holds we can define an isometric operator $C_1$ from ${\cl H}$ into ${\cl H}'$ in the following way. First, due to \eqref{cfeq4.12} we can find two mutually orthogonal subspaces ${\cl X}$  and ${\cl Y}$ of ${\cl H}'\ominus {\cl M}'$ such that
\[
 \dim {\cl X} = \dim {\cl D}_C, \quad \dim {\cl Y}  =\dim {\cl H}\ominus {\cl M}.
\]
Choose for $C_1|{\cl Y}$ any unitary operator $\in {\cl L}({\cl H}\ominus {\cl M}, {\cl Y})$ and define $C_1|{\cl M}$ by
\begin{equation}\label{cfeq4.12e}
 C_1m = C^*m + XD_{C^*}m \qquad (m\in{\cl M}),\tag{4.12e}
\end{equation}
where $X$ is any unitary operator in ${\cl L}({\cl D}_{C^*},{\cl X})$. The operator thus defined on ${\cl H}$ is isometric and
\[
 P'_{{\cl M}'}C_1|{\cl M} = C^*.
\]
Consequently, $C^*_1$ is coisometric and
\begin{align*}
 (C^*_1m',h) &= (m',C_1h) = (m', C_1P_{\cl M}h) =\\
&= (m',C^*P_{\cl M}h) = (Cm'_1P_{\cl M}h) = Cm',h)
\end{align*}
for all $m'\in {\cl M}'$, $h\in {\cl H}$, and hence $C^*_1|{\cl M}' =C$, where $P_{\cl M}$ is the orthogonal projection of ${\cl H}$ onto ${\cl M}$.$\hfill\square$

Returning to the discussion preceding the above lemma, we deduce that the contraction $\ovl C$ must satisfy the condition
\begin{equation}\label{cfeq4.13}
\dim(\ker \bar\omega^*)\ge \dim(\ker\bar\omega \oplus {\cl D}_{C^*}).
\end{equation}
Thus we have proved the following.
\end{proof}

\begin{prop}\label{cfprop4.4}
Assume that there exists a not identically zero sequence $\{d_n\}^\infty_{n=0} \subset {\cl D}$ satisfying \eqref{cfeq4.10a} and \eqref{cfeq4.10b}. In order that this sequence also satisfies \eqref{cfeq4.10} for an appropriate free Schur contraction $R(z) = R(0)$ $(z\in {\bb D})$ when $R(0)$ is isometric, the following set of properties is necessary and sufficient:
\begin{itemize}
 \item[\rm (a)] The sequence $\{d_n\}^\infty_{n=0}$ can be extended to a bilateral sequence $\{d_n\}^\infty_{n=-\infty}$ satisfying
\begin{equation}\label{cfeq4.14}
\bar\omega\bar\omega^* \Pi^*d_{n+1} = \bar\omega d_n\qquad (n\in {\bb Z});
\end{equation}
\item[\rm (b)] the definition \eqref{cfeq4.11a} yields, by linearity and continuity, a contraction in ${\cl L}(((I-\bar\omega\bar\omega^*)\Pi{\cl D}_0)^-$, $((I-\bar\omega^*\bar\omega){\cl D}_0)^-)$, where ${\cl D}_0$ is the linear span of $\{d_n\}^\infty_{n=-\infty}$;
\item[\rm (c)] the inequality
\end{itemize}
\begin{equation}\label{cfeq4.14a}
 \dim(\ker \bar\omega^*) \ge \dim(\ker \bar\omega \oplus {\cl D}_{C^*})\tag{4.14a}
\end{equation}
\begin{itemize}
\item[] holds.
\end{itemize}
\end{prop}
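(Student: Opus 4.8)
The plan is to reduce the operator identity \eqref{cfeq4.10}, $W(0)^{*}\Pi^{*}d_{n+1}=d_{n}$, to its two components relative to the orthogonal decomposition ${\cl D}=\ker\bar\omega\oplus(\ker\bar\omega)^{\perp}$, and then to recognize each component as one of the conditions (a), (b), (c). Since $R(z)\equiv R(0)$, formula \eqref{cfeq4.1c} reads $W(0)=\bar\omega+R(0)(I-\bar\omega^{*}\bar\omega)$, and then, using $\text{ran } R(0)\subseteq\ker\bar\omega^{*}$ and the partial-isometry identities for $\bar\omega$,
\[
W(0)^{*}=\bar\omega^{*}+R(0)^{*}(I-\bar\omega\bar\omega^{*}),\qquad\bar\omega\,W(0)^{*}=\bar\omega\bar\omega^{*},
\]
the second equality because $\bar\omega R(0)^{*}=0$. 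Applying $\bar\omega$ to \eqref{cfeq4.10} and invoking the second identity shows that \eqref{cfeq4.10} forces \eqref{cfeq4.14} (for $n\ge0$), while projecting \eqref{cfeq4.10} onto $\ker\bar\omega$ yields exactly $R(0)^{*}(I-\bar\omega\bar\omega^{*})\Pi^{*}d_{n+1}=(I-\bar\omega^{*}\bar\omega)d_{n}$, that is, \eqref{cfeq4.11a} with $C$ the restriction of $R(0)^{*}$; conversely \eqref{cfeq4.14} together with this last identity reassemble \eqref{cfeq4.10}. This two-component equivalence is the engine of the whole argument.

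For necessity, assume an isometric $R(0)$ satisfies \eqref{cfeq4.10}. Extend $\{d_{n}\}_{n\ge0}$ backwards by $d_{n-1}:=W(0)^{*}\Pi^{*}d_{n}$, so that $d_{n}=W(0)^{*}\Pi^{*}d_{n+1}$ for every $n\in{\bb Z}$; applying $\bar\omega$ and $\bar\omega\,W(0)^{*}=\bar\omega\bar\omega^{*}$ gives \eqref{cfeq4.14} for all $n\in{\bb Z}$, which is (a). The map $C$ of \eqref{cfeq4.11a} now coincides with $R(0)^{*}$ on the vectors $(I-\bar\omega\bar\omega^{*})\Pi^{*}d_{n+1}$, hence extends by continuity to $\overline{C}=R(0)^{*}|\big((I-\bar\omega\bar\omega^{*})\Pi^{*}{\cl D}_{0}\big)^{-}$, a restriction of the contraction $R(0)^{*}$ and so itself a contraction, with dense range in $\big((I-\bar\omega^{*}\bar\omega){\cl D}_{0}\big)^{-}$: this is (b). Finally $R(0)$ isometric makes $R(0)^{*}$ a coisometry of $\ker\bar\omega^{*}$ onto $\ker\bar\omega$ extending $\overline{C}$, so Lemma \ref{cflem4.4} yields $\dim(\ker\bar\omega^{*}\ominus{\cl M}')\ge\dim\big((\ker\bar\omega\ominus{\cl M})\oplus{\cl D}_{\overline{C}^{*}}\big)$, with ${\cl M}'=\big((I-\bar\omega\bar\omega^{*})\Pi^{*}{\cl D}_{0}\big)^{-}$ and ${\cl M}=\big((I-\bar\omega^{*}\bar\omega){\cl D}_{0}\big)^{-}$; adding $\dim{\cl M}'\ge\dim{\cl M}$ (dense range of $\overline{C}$) to both sides produces \eqref{cfeq4.14a}, which is (c).

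For sufficiency, assume (a), (b), (c); fix a bilateral extension as in (a) and form ${\cl D}_{0}$ and $\overline{C}$ as in (b). By Lemma \ref{cflem4.4}---here (c) is used, together again with $\dim{\cl M}'\ge\dim{\cl M}$ to pass between \eqref{cfeq4.14a} and the codimension form of that lemma's hypothesis---$\overline{C}$ has a coisometric extension $\widehat{C}\in{\cl L}(\ker\bar\omega^{*},\ker\bar\omega)$. Put $R(0):=\widehat{C}^{*}$, an isometry of $\ker\bar\omega$ into $\ker\bar\omega^{*}$, and let $W(0)$ be the associated constant Schur contraction. Then $\bar\omega\,W(0)^{*}=\bar\omega\bar\omega^{*}$ together with \eqref{cfeq4.14} delivers the $(\ker\bar\omega)^{\perp}$-component of \eqref{cfeq4.10}, while $R(0)^{*}=\widehat{C}$, which extends $\overline{C}$, delivers the $\ker\bar\omega$-component via \eqref{cfeq4.11a}; hence \eqref{cfeq4.10} holds, and $R(0)$ is the required isometric free Schur contraction.

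I expect the main obstacle to be precisely the dimension bookkeeping behind condition (c). Lemma \ref{cflem4.4} is stated in terms of the codimensions $\dim(\ker\bar\omega^{*}\ominus{\cl M}')$ and $\dim(\ker\bar\omega\ominus{\cl M})$, whereas \eqref{cfeq4.14a} is an absolute inequality; showing that the two are interchangeable in the present situation---using the dense range of $\overline{C}$, the identity $\dim\ker\bar\omega^{*}=\dim{\cl M}'+\dim(\ker\bar\omega^{*}\ominus{\cl M}')$, and the arithmetic of separable-space dimensions---is the delicate step. Once it is settled, the rest is the mechanical two-component assembly of \eqref{cfeq4.10} set up in the first paragraph.
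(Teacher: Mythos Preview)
Your proposal is correct and follows essentially the same route as the paper: the paper's proof is the discussion preceding the statement (ending with ``Thus we have proved the following''), which derives (a), (b), and (c) from \eqref{cfeq4.10} via the backward extension \eqref{cfeq4.11}, the map $C$ of \eqref{cfeq4.11a}, and Lemma~\ref{cflem4.4}, and you have simply made the two-component decomposition of $W(0)^{*}\Pi^{*}d_{n+1}=d_{n}$ and the sufficiency direction more explicit than the paper does. Your flag on the passage between Lemma~\ref{cflem4.4}'s codimension inequality \eqref{cfeq4.12} and the absolute form \eqref{cfeq4.14a} is apt---the paper itself makes that passage without comment.
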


Note that \eqref{cfeq4.14a} is a more stringent condition than \eqref{cfeq4.7}.

Finally, the proof of Lemma \ref{cflem4.3} allows us to infer the following complement to Proposition \ref{cfprop4.4} and Lemma \ref{cflem4.3}.

\begin{prop}\label{cfprop4.5}
Let $\{d_n\}$ be a sequence satisfying \eqref{cfeq4.10a}, \eqref{cfeq4.10b}, all the properties (a), (b), and (c)  in Proposition \ref{cfprop4.4} and
\begin{equation}\label{cfeq4.14b}
 \sup_{n\ge 0} \|d_n\| < \infty.\tag{4.14b}
\end{equation}
Then no operator $\Gamma(\cdot)$ corresponding to a free Schur contraction provided by Proposition \ref{cfprop4.4} is isometric.
\end{prop}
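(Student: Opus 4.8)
The plan is to reverse the reasoning behind the proof of Lemma~\ref{cflem4.3}: feed the given sequence into the sufficiency half of Proposition~\ref{cfprop4.4}, and then use Lemma~\ref{cflem4.2} together with Corollary~\ref{cfcor3.3} to rule out isometry. First I would invoke Proposition~\ref{cfprop4.4}. Since $\{d_n\}_{n\ge0}$ satisfies properties (a), (b), (c) of that proposition, the latter guarantees that $\{d_n\}_{n\ge0}$ also satisfies \eqref{cfeq4.10}, i.e.\ $W(0)^*\Pi^*d_{n+1}=d_n$ $(n\ge0)$, for some constant, isometric free Schur contraction $R(z)\equiv R(0)$; and every free Schur contraction ``provided by Proposition~\ref{cfprop4.4}'' is precisely of this kind. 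It therefore suffices to fix one such $R(0)$ and to show that the associated $\Gamma(\cdot)$ is not an isometry. Writing $V=W(0)^*\Pi^*=A_0^*$ with $A_0=\Pi W(0)$ as in \eqref{cfeq4.8}, relation \eqref{cfeq4.10} reads $Vd_{n+1}=d_n$ $(n\ge0)$.

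Next I would apply Lemma~\ref{cflem4.2} to $V$. By \eqref{cfeq4.14b} the sequence $\{d_n\}$ is bounded; being not identically zero (as in the setting of Proposition~\ref{cfprop4.4}), it has a least index $k_0$ with $d_{k_0}\ne0$, and the shifted sequence $h_n:=d_{n+k_0}$ is then bounded, has $h_0\ne0$, and still satisfies $Vh_{n+1}=h_n$ (discarding an initial run of zeros obviously preserves both \eqref{cfeq4.10} and \eqref{cfeq4.14b}). The nontrivial implication of Lemma~\ref{cflem4.2}, applied to the contraction $V$ and the sequence $\{h_n\}$, yields that $V$ is not a $C_{\bullet 0}$-contraction; equivalently, $A_0=V^*$ is not a $C_{0\bullet}$-contraction, so \eqref{cfeq3.7} fails for $A_0$.

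Finally I would invoke Corollary~\ref{cfcor3.3}. Because $R(z)\equiv R(0)$ and $\bar\omega$ is constant, $W(z)\equiv W(0)$ is constant, and $W(0)$ is an isometry since $R(0)$ is (as noted in the paragraph preceding Lemma~\ref{cflem4.3}). Corollary~\ref{cfcor3.3} therefore applies with $W(z)\equiv W(0)$ and states that $\Gamma(\cdot)$ is an isometry if and only if $A_0$ is a $C_{0\bullet}$-contraction; since it is not, $\Gamma(\cdot)$ is not an isometry. As $R(0)$ was an arbitrary free Schur contraction of the type furnished by Proposition~\ref{cfprop4.4}, the proposition follows.

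I do not anticipate a substantial obstacle, since the proof merely recombines facts already in hand. The one point deserving care is the step ``a bounded, not identically zero solution of $Vd_{n+1}=d_n$ forces $V$ not to be a $C_{\bullet 0}$-contraction,'' which is exactly the quantitative content of the proof of Lemma~\ref{cflem4.2} and which we may use freely; beyond that, one need only record (as above) that truncating $\{d_n\}$ to arrange $d_{k_0}\ne0$ leaves \eqref{cfeq4.10} and \eqref{cfeq4.14b} intact, and that ``$R(0)$ isometric'' is what lets Corollary~\ref{cfcor3.3} reduce the question to the $C_{0\bullet}$-property of $A_0$.
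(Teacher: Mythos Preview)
Your proof is correct and follows essentially the same approach the paper intends: the paper states only that ``the proof of Lemma~\ref{cflem4.3} allows us to infer'' Proposition~\ref{cfprop4.5}, and what you have written is precisely the reversal of that argument---Proposition~\ref{cfprop4.4} yields \eqref{cfeq4.10} for the given $R(0)$, Lemma~\ref{cflem4.2} turns the bounded nonzero sequence into the failure of the $C_{\bullet 0}$-property for $V=A_0^*$, and Corollary~\ref{cfcor3.3} then blocks $\Gamma(\cdot)$ from being isometric. Your care in shifting the sequence to ensure $h_0\ne0$ is appropriate and not explicitly addressed in the paper.
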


We conclude this note with a closer look at the case
\begin{equation}\label{cfeq4.15}
 \|X\|<1,
\end{equation}
in which the partial isometries $\bar\omega$ and $\bar\omega^*$ can be given in an explicit form. Indeed, in this case $D_X$ and $T^*D^2_XT$ are invertible operators in ${\cl H}$ and ${\cl D} = {\cl D}_X = {\cl H}$,
\begin{align}\label{cfeq4.15a}
\bar\omega^*\bar\omega &= D_XT(T^*D^2_XT)^{-1}T^*D_X,\tag{4.15a}\\
\label{cfeq4.15b}
\bar\omega &= \begin{bmatrix}
               D_X\\ D_{T'}X
              \end{bmatrix} (T^*D^2_XT)^{-1}T^*D_X, \quad \text{and}\tag{4.15b}\\
\label{cfeq4.15c}
\bar\omega^* &= D_XT(T^*D^2_XT)^{-1}[D_XX^*D_{T'}].\tag{4.15c}
\end{align}
With this preparation we can now prove the following result.

\begin{prop}\label{cfprop4.6}
Assume $T$ is a unilateral shift (of any multiplicity) and that the relations \eqref{cfeq4.7} and \eqref{cfeq4.15} are satisfied. Let $R_0\in {\cl L}(\ker\bar\omega, \ker\bar\omega^*)$ be any isometry. Define the free Schur contraction by $R(z)=R_0$ $(z\in{\bb D})$ and let $Y$ be the corresponding intertwining lifting of $X$. Then $Y$ is an isometry.
\end{prop}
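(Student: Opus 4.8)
The plan is to reduce the proposition to Lemma~\ref{cflem4.3}. Since $R(z)\equiv R_0$ with $R_0$ isometric, $R(\cdot)$ is an isometric free Schur contraction that is constant in $z$, and since $Y$ is isometric precisely when the associated $\Gamma(\cdot)$ is (see \eqref{cfeq4.1a}), it suffices to verify that $\omega$ has property~(a) of Lemma~\ref{cflem4.3}: that every sequence $\{d_n\}^\infty_{n=0}\subset\cl D=\cl D_X$ satisfying \eqref{cfeq4.10a} and \eqref{cfeq4.10b} must vanish identically. Hypothesis \eqref{cfeq4.7} is exactly what is needed to invoke Lemma~\ref{cflem4.3} (and to guarantee an isometry $R_0\in\cl L(\ker\bar\omega,\ker\bar\omega^*)$ exists at all), while hypothesis \eqref{cfeq4.15}, $\|X\|<1$, enters through the fact that then $\cl D_X=\cl H$ and $D_X$ is boundedly invertible.

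So I would fix a sequence $\{d_n\}_{n\ge0}$ satisfying \eqref{cfeq4.10a} and \eqref{cfeq4.10b} and first convert \eqref{cfeq4.10a} into a shift relation. Applying $\bar\omega^*$ to \eqref{cfeq4.10a} and using the partial isometry identity $\bar\omega^*\bar\omega\bar\omega^*=\bar\omega^*$ gives $\bar\omega^*\Pi^*d_{n+1}=\bar\omega^*\bar\omega\,d_n$. I would then pair this with an arbitrary vector $D_XTk$ $(k\in\cl H)$, which lies in $(D_XT\cl H)^-=(\ker\bar\omega)^\perp=\operatorname{ran}(\bar\omega^*\bar\omega)$ by \eqref{cfeq2.4c} and is therefore fixed by $\bar\omega^*\bar\omega$; using $\Pi^*d_{n+1}=0\oplus d_{n+1}$ (from \eqref{cfeq2.4d}) and $\bar\omega(D_XTk)=D_{T'}Xk\oplus D_Xk$ (the defining action $\omega_0$ in \eqref{cfeq2.4a}, \eqref{cfeq2.4b}, or the explicit formula \eqref{cfeq4.15b}), one gets
\[
(D_Xd_{n+1},k)=(\Pi^*d_{n+1},\bar\omega D_XTk)=(\bar\omega^*\bar\omega\,d_n,D_XTk)=(d_n,D_XTk)=(T^*D_Xd_n,k)
\]
for all $k\in\cl H$, where the middle equality uses $\bar\omega^*\Pi^*d_{n+1}=\bar\omega^*\bar\omega\,d_n$. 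Hence $D_Xd_{n+1}=T^*D_Xd_n$, and so $D_Xd_n=T^{*n}D_Xd_0$ for every $n$. To finish, I would invoke that $T$, being a unilateral shift, satisfies $T^{*n}\to0$ strongly, so $\|D_Xd_n\|\to0$; since $D_X$ is bounded below, $\|d_n\|\to0$, and then \eqref{cfeq4.10b} forces $\|d_0\|\le\lim_n\|d_n\|=0$, so $d_0=0$ and, again by \eqref{cfeq4.10b}, $d_n=0$ for all $n$. This establishes property~(a), and Lemma~\ref{cflem4.3} gives the result.

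The one step that requires genuine care -- and which I regard as the crux -- is the passage from \eqref{cfeq4.10a} to the clean identity $D_Xd_{n+1}=T^*D_Xd_n$: it rests entirely on bookkeeping with the partial isometry $\bar\omega$ and the projection $\Pi$, and it is here that the shift structure of $T$ on $\cl H$ is transported to the data space $\cl D_X$, where (thanks to $\|X\|<1$) it can be undone by the invertible $D_X$. Everything else is routine. As an alternative to citing Lemma~\ref{cflem4.3}, one can argue directly from Section~\ref{cfsec3}: by \eqref{cfeq4.1c} the Schur contraction is $W(z)\equiv W_0:=\bar\omega+R_0(I-\bar\omega^*\bar\omega)$, which is an isometry because its two summands have orthogonal ranges, in $\operatorname{ran}\omega$ and in $\ker\bar\omega^*$ respectively; Corollary~\ref{cfcor3.3} then makes isometry of $\Gamma(\cdot)$ equivalent to $A_0=\Pi W_0$ being a $C_{0\bullet}$-contraction, and if it were not, Lemma~\ref{cflem4.2} applied to $A_0^*=W_0^*\Pi^*$ would produce a bounded nonzero sequence satisfying \eqref{cfeq4.10}, hence also \eqref{cfeq4.10a} and \eqref{cfeq4.10b}, which the computation above shows is impossible.
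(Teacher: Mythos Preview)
Your proof is correct and follows essentially the same route as the paper: reduce to property~(a) of Lemma~\ref{cflem4.3}, apply $\bar\omega^*$ to \eqref{cfeq4.10a}, and extract the relation $D_Xd_{n+1}=T^*D_Xd_n$ so that $d_n=D_X^{-1}T^{*n}D_Xd_0\to0$. The only cosmetic difference is that the paper substitutes the explicit formulas \eqref{cfeq4.15a} and \eqref{cfeq4.15c} for $\bar\omega^*\bar\omega$ and $\bar\omega^*$ to read off the identity directly, whereas you obtain it by pairing with $D_XTk$; both arguments are the same computation in different clothing.
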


\begin{proof}
It will be sufficient to prove that in the present case Property (a) in Lemma \ref{cflem4.3} is satisfied. So, let $\{d_n\}^\infty_{n=0}$ be a sequence in ${\cl H}$ satisfying the relations \eqref{cfeq4.10a} and \eqref{cfeq4.10b}. Applying $\bar\omega^*$ on both sides of identity \eqref{cfeq4.10a} we obtain
\begin{equation}\label{cfeq4.16}
\bar\omega^*\Pi^*d_{n+1} = \bar\omega^*\bar\omega d_n\qquad (n=0,1,2,\ldots,).
\end{equation}
Introducing in \eqref{cfeq4.16} the explicit forms \eqref{cfeq4.15a} of $\bar\omega^*\bar\omega$; and \eqref{cfeq4.15c} of $\bar\omega^*$, respectively, we obtain
\[
 D_XT(T^*D^2_XT)^{-1}D_X d_{n+1} = D_XT(T^*D^2_XT)^{-1} T^*D_X d_n
\]
$(n=0,1,\ldots)$, and whence
\[
 d_{n+1} = D^{-1}_XT^*D_X d_n\qquad (n=0,1,2,\ldots).
\]
We infer
\[
d_n = D^{-1}_XT^{*n}D_Xd_0\qquad (n=0,1,2,\ldots),
\]
where $T^{*n}\to 0$ strongly. This together with \eqref{cfeq4.10b} forces $d_n=0$ for all $n\ge 0$.
\end{proof}

\begin{rk}\label{cfrk4.1}
Under the assumptions of Proposition \ref{cfprop4.6}, the inequality \eqref{cfeq4.7} obtains an explicit form. Indeed, since
\begin{align*}
\ker \bar\omega^*\bar\omega &= D^{-1}_X \ker T^*\\
\intertext{and}
\ker \bar\omega\bar\omega^* &= \left\{\begin{bmatrix}
                                       D^{-1}_XX^*D_Td'\\ d'
                                      \end{bmatrix}\colon \ d'\in {\cl D}_{T'} (={\cl D}')\right\},
\end{align*}
we have
\begin{equation}\label{cfeq4.17}
 \dim \ker \bar\omega^*\bar\omega = \dim \ker T^* = \dim {\cl D}_{T^*},
\end{equation}
and
\begin{equation}\label{cfeq4.17a}
 \dim \ker \bar\omega \bar\omega^* = \dim {\cl D}_{T'}.\tag{4.17a}
\end{equation}
Consequently, introducing \eqref{cfeq4.17} and \eqref{cfeq4.17a} in \eqref{cfeq4.7}, the last relation takes the form
\begin{equation}\label{cfeq4.17b}
 \dim {\cl D}_{T'} \ge \dim {\cl D}_{T^*}.\tag{4.17b}
\end{equation}
\end{rk}

In fact, the previous proof can be modified to yield the following slight improvement of Proposition \ref{cfprop4.6}.

\begin{prop}\label{cfprop4.7}
Assume that $T$ is a unilateral shift and \eqref{cfeq4.7} is satisfied.
 Assume also that
\begin{equation}\label{cfeq4.18}
D_X \text{ and } D_XT \text{ both have closed range.}
\end{equation}
Then the conclusion of Proposition \ref{cfprop4.6} is valid.
\end{prop}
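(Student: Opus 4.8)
The plan is to follow the proof of Proposition \ref{cfprop4.6} line by line, replacing every appeal to the invertibility of $D_X$ and of $T^*D_X^2T$ by an argument that uses only the closed-range hypothesis \eqref{cfeq4.18}. As before, it suffices to verify that Property (a) of Lemma \ref{cflem4.3} holds: any sequence $\{d_n\}_{n\ge 0}\subset{\cl D}={\cl D}_X$ satisfying \eqref{cfeq4.10a} and \eqref{cfeq4.10b} is either identically zero or unbounded. So fix such a sequence and, arguing by contradiction, assume it is bounded and not identically zero. Applying $\bar\omega^*$ to \eqref{cfeq4.10a} gives, exactly as in \eqref{cfeq4.16},
\[
\bar\omega^*\Pi^*d_{n+1}=\bar\omega^*\bar\omega\, d_n\qquad(n=0,1,2,\ldots).
\]
The task is to extract from this the recursion $D_X d_{n+1}=T^*D_X d_n$ without dividing by $D_X$ or by $T^*D_X^2T$.

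The key point is that the formulas \eqref{cfeq4.15a}--\eqref{cfeq4.15c} are derived from the defining relations \eqref{cfeq2.4a}--\eqref{cfeq2.4b} for $\omega$ and $\bar\omega$, and those relations make sense, and can be manipulated, as long as one keeps track of ranges rather than inverses. Concretely, recall $\omega(D_XTh)=D_{T'}Xh\oplus D_Xh$ for $h\in{\cl H}$; thus $\bar\omega^*(D_{T'}Xh\oplus D_Xh)=P_{(D_XT{\cl H})^-}D_XTh$, and similarly $\bar\omega^*\bar\omega$ is the orthogonal projection of ${\cl D}_X$ onto $(D_XT{\cl H})^-$. Now $d_n\in{\cl D}_X=(D_X{\cl H})^-$, and since $D_X$ has closed range ${\cl D}_X=D_X{\cl H}$, so we may write $d_n=D_X x_n$ for some $x_n\in{\cl H}$ (with $x_n$ determined modulo $\ker D_X$, and choosable bounded since $D_X$ restricted to $(\ker D_X)^\perp$ is bounded below). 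Likewise, since $D_XT$ has closed range, $(D_XT{\cl H})^-=D_XT{\cl H}$ and $\bar\omega^*\bar\omega d_n=D_XT y_n$ for a suitable $y_n\in{\cl H}$. Feeding these into the displayed identity and using $\Pi^*d_{n+1}=0\oplus d_{n+1}$ together with the explicit action of $\bar\omega^*$ on $0\oplus D_X x$ (this is $P_{D_XT{\cl H}}(D_XTh)$ where $D_{T'}Xh\oplus D_Xh$ is the piece of $0\oplus D_X x$ in $\operatorname{ran}\omega$ — here one uses $\|X\|$ need not be $<1$, only that $D_{T'}$ and $D_X$ coexist), I expect to recover, after cancelling the common closed-range factor, the clean recursion $D_X d_{n+1}=T^*D_X d_n$, hence $D_X d_n=T^{*n}D_X d_0$.

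Once that recursion is in hand the proof finishes as in Proposition \ref{cfprop4.6}: $T$ is a unilateral shift, so $T^{*n}\to0$ strongly, whence $\|D_X d_n\|\to0$. On the other hand $\{d_n\}$ is nondecreasing in norm by \eqref{cfeq4.10b}, so $\|d_n\|\ge\|d_0\|$ for all $n$; combined with $d_n=D_X x_n$ and the fact that $D_X$ is bounded below on $(\ker D_X)^\perp$, one gets $\|D_X d_n\|\ge c\|d_n\|\ge c\|d_0\|>0$ if $d_0\notin\ker D_X$ — but if $d_0\in\ker D_X$ then $D_X d_0=0$ forces $d_0=0$ (as $d_0\in{\cl D}_X$), contradicting nontriviality, or else one pushes the argument one step: $\|d_n\|$ nondecreasing and $\|D_Xd_n\|\to 0$ together with the closed-range estimate force all $d_n$ into $\ker D_X\cap{\cl D}_X=\{0\}$. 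Either way $\{d_n\}$ is identically zero, contradiction, so Property (a) holds and Lemma \ref{cflem4.3} applies.

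The main obstacle I anticipate is the bookkeeping in the middle paragraph: carefully justifying that the cancellation of the factors $D_X$ and $D_XT$ from the identity $\bar\omega^*\Pi^*d_{n+1}=\bar\omega^*\bar\omega d_n$ is legitimate when these operators are not invertible but merely have closed range. The subtlety is that "cancelling" $D_XT$ requires knowing the two sides lie in $D_XT{\cl H}$ and that $D_XT$ is injective — which it need not be. The right fix is to avoid literal cancellation and instead argue that the vector $D_X d_{n+1}-T^*D_X d_n$ is orthogonal to a dense (hence, by closed range, full) subspace of the relevant space, using $T^*D_X(D_XT{\cl H})$ and the closed-range hypothesis to upgrade "orthogonal to a dense set" to "zero." Making that step airtight, rather than the final shift-compression argument, is where the care is needed.
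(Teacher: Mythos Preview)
Your approach is essentially the same as the paper's: both follow the proof of Proposition \ref{cfprop4.6} and replace the appeals to invertibility of $D_X$ and $T^*D_X^2T$ by closed-range arguments in order to verify Property (a) of Lemma \ref{cflem4.3}. The paper's own proof is in fact only a three-sentence sketch, phrasing the key technical point as constructing a \emph{left inverse} of $T^*D_X^2T$ and checking that the range of $T^*D_X$ lies in the support of $T^*D_X^2T$; your version unpacks this same step as a careful ``cancellation'' of the closed-range factor $D_XT$, and your final paragraph correctly flags exactly the point (legitimizing that cancellation via an orthogonality/range-containment argument) that the paper singles out as the key. Your endgame, deducing $D_Xd_n=T^{*n}D_Xd_0\to 0$ and then using that $D_X$ is bounded below on ${\cl D}_X=(\ker D_X)^\perp$ together with \eqref{cfeq4.10b} to force $d_n\equiv 0$, is a clean way to finish and goes slightly beyond what the paper writes out.
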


\begin{proof}
We will use the proof of Proposition \ref{cfprop4.6} replacing the inverse for $T^*D^2_XT$ by a left inverse which the fact that $D_XT$ and $D_X$ have closed range will allow us to define. The key here is to show that the range of  $T^*D_X$ is contained in the support of $T^*D^2_XT$.
\end{proof}

\begin{rk}\label{cfrk4.2}
One can verify using the definition that $\ker\bar\omega = D_X{\cl H}\cap D_{T^*}{\cl H}$ and $\ker\bar\omega^* = D_{T'}{\cl H}' \cap D_{X^*} {\cl H}'$. Therefore, in the context of Proposition \ref{cfprop4.7}, \eqref{cfeq4.7} becomes
\begin{equation}\label{cfeq4.17c}
\dim(D_X{\cl H}\cap D_{T^*}{\cl H}) \le \dim(D_{T'}{\cl H}'\cap D_{X^*}{\cl H}').\tag{4.17c}
\end{equation}
Obviously, \eqref{cfeq4.17c} reduces to \eqref{cfeq4.17b} if $\|X\|<1$ and here $D_X$ is invertible.
\end{rk}

\begin{rk}\label{cfrk4.3}
We begin by noting that \eqref{cfeq4.18} in Proposition \ref{cfprop4.7} is equivalent to the assumption that $D_X$ and $D_{XT}$ have closed range.
Proposition \ref{cfprop4.7} has a direct consequence concerning an apparently more general setting of the Commutant Lifting Theorem. Indeed, if $T_0\in {\cl L}({\cl H}_0)$ is a contraction, $X_0\in {\cl L}({\cl H}_0, {\cl H}')$ satisfies
\begin{equation}\label{cfeq4.19}
 X_0T_0 = T'X_0, \text{ and both $D_{X_0}$ and $D_{X_0T_0}$ have closed range.}
\end{equation}
If $T\in {\cl L}({\cl H})$ is the minimal isometric lifting of $T_0$, then
\begin{equation}\label{cfeq4.19a}
 X = X_0P_0,\tag{4.19a}
\end{equation}
where $P_0$ is the orthogonal projection of ${\cl H}$ onto ${\cl H}_0$, will satisfy
\begin{equation}\label{cfeq4.19b}
 XT = T'X \text{ and both $D_X$ and $D_{XT}$ will have closed range.}\tag{4.19b}
\end{equation}

Let $Y$ be any contractive intertwining lifting of $X$. Then  the dimensions satisfy
\begin{equation}\label{cfeq4.19c}
P'Y = X_0P_0\quad \text{and}\quad YT=T'Y.\tag{4.19c}
\end{equation}
Moreover, any contraction $Y\in {\cl L}({\cl H}, {\cl K}')$ satisfying \eqref{cfeq4.19b} (actually referred to as a contractive intertwining lifting of $X_0$) will be a contractive intertwining lifting of $X$. Now assume that $T_0$ is a $C_{\bullet 0}$-contraction. This implies that $T$ is a shift and that
\[
 \dim {\cl D}_{T^*} = \dim {\cl D}_{T^*_0}.
\]
(see \cite[Ch.~II]{SzNF2}). Thus, if
\begin{equation}\label{cfeq4.20}
 \dim({\cl D}_{X_0}{\cl H}_0\cap {\cl D}_{T_0}{\cl H}_0) \le \dim({\cl D}_{T'}{\cl H}'\cap {\cl D}_{X^*_0}{\cl H}'),
\end{equation}
we can apply Proposition \ref{cfprop4.7} to the present setting and conclude \emph{that the set of the isometric intertwining liftings of $X_0$ is not empty and, moreover, that for every free Schur contraction of the form $R(z) = R_0$ $(z\in {\bb D})$ with $R_0$ an isometry, the corresponding contractive intertwining lifting $Y$ of $X_0$ is also an isometry}; in connection with this result see \cite{FFT,LT}.
\end{rk}

\bigskip

\n Department of Mathematics, Indiana University,\\
Bloomington, Indiana \  47405\\
Email address: bercovic@indiana.edu.
\medskip 

\n Department of Mathematics, Texas  A\&M University,\\
College Station, Texas \ 77843\\
Email address: rdouglas@math.tamu.edu\medskip

\n Department of Mathematics, Texas A\&M University,\\
College Station, Texas \ 77843\\
Email address: foias@math.tamu.edu
\end{document}